\newcommand{\Z}{\mathds{Z}}
\newcommand{\Q}{\mathds{Q}}
\newcommand{\N}{\mathds{N}}
\newcommand{\R}{\mathds{R}}
\newcommand{\p}{\phantom}
\newcommand{\q}{\quad}
\newtheorem{thm}{Theorem}[section]
\newtheorem{lem}[thm]{Lemma}
\newtheorem{kor}[thm]{Corollary}
\newtheorem{prop}[thm]{Proposition}
\theoremstyle{definition}
\newtheorem*{defi}{Definition}
\newtheorem*{nota}{Notation}
\theoremstyle{remark}
\newtheorem*{rema}{Remark}
\title{New estimates for the $n$th prime number}
\author{Christian Axler}
\address{Institute of Mathematics\\ Heinrich-Heine University Düsseldorf \\ 40225 Düsseldorf, Germany}
\email{christian.axler@hhu.de}
\date{\today}
\subjclass[2010]{Primary 11N05; Secondary 11A41}
\keywords{Chebyshev's $\vartheta$-function, prime counting function, prime numbers}
\begin{document}

\begin{abstract}
In this paper we establish new upper and lower bounds for the $n$th prime number, which improve several existing bounds of similar shape. As the main tool we 
use some recently obtained explicit estimates for the prime counting function. A further main tool is the use of estimates concerning the reciprocal of $\log 
p_n$. As an application we derive new estimates for $\vartheta(p_n)$, where $\vartheta(x)$ is the Chebyshev's $\vartheta$-function.
\end{abstract}

\maketitle

\section{Introduction}

Let $p_n$ denote the $n$th prime number and let $\pi(x)$ be the number of primes not exceeding $x$. In 1896, Hadamard \cite{hadamard1896} and de la 
Vall\'{e}e-Poussin \cite{vallee1896} independently proved the asymptotic formula $\pi(x) \sim x/\log x$ as $x \to \infty$, which is known as the \textit{Prime 
Number Theorem}. (Here $\log x$ is the natural logarithm of $x$.) As a consequence of the Prime Number Theorem, one gets the asymptotic expression 
\begin{equation}
p_n \sim n \log n \q\q (n \to \infty). \tag{1.1} \label{1.1}
\end{equation}
Cipolla \cite{cp} found a more precise result. He showed that for every positive integer $m$ there exist unique monic polynomials $T_1, \ldots, T_m$ with 
rational coefficients and $\deg(T_k)= k$ so that
\begin{equation}
p_n = n \left( \log n + \log \log n - 1 + \sum_{k=1}^m \frac{(-1)^{k+1}T_k(\log \log n)}{k\log^kn} \right) 
+ O\left( \frac{n (\log \log n)^{m+1}}{\log^{m+1}n} \right). \tag{1.2} \label{1.2}
\end{equation}
The polynomials $T_k$ can be computed explicitly. In particular, $T_1(x) = x - 2$ and $T_2(x) = x^2 - 6x + 11$ (see Cipolla \cite{cp} or Salvy \cite{salvy} for 
further details). Since the computation of the $n$th prime number is difficult for large $n$, we are interested in explicit estimates for $p_n$. The 
asymptotic formula \eqref{1.2} yields
\begin{align}
& p_n > n \log n, \tag{1.3} \label{1.3} \\
& p_n < n ( \log n + \log \log n), \tag{1.4} \label{1.4} \\
& p_n > n ( \log n + \log \log n - 1) \tag{1.5} \label{1.5}
\end{align}
for all sufficiently large values of $n$. The first result concerning a lower bound for the $n$th prime number is due to Rosser \cite[Theorem 1]{jr}. He 
showed that the inequality \eqref{1.3} holds for every positive integer $n$. In the literature, this result is often called \textit{Rosser's theorem}. 
Moreover, he proved \cite[Theorem 2]{jr} that
\begin{equation}
p_n < n(\log n + 2 \log \log n) \tag{1.6} \label{1.6}
\end{equation}
for every $n \geq 4$. The next results concerning the upper and lower bounds that correspond to the first three terms of \eqref{1.2} are due to Rosser and 
Schoenfeld \cite[Theorem 3]{rosserschoenfeld1962}. They refined Rosser's theorem and the inequality \eqref{1.6} by showing that
\begin{displaymath}
p_{n} > n(\log n + \log \log n - 1.5) 
\end{displaymath}
for every $n \geq 2$ and that the inequality
\begin{equation}
p_{n} < n(\log n + \log \log n - 0.5) \tag{1.7} \label{1.7}
\end{equation}
holds for every $n \geq 20$. The inequality \eqref{1.7} implies that \eqref{1.4} is fulfilled for every $n \geq 6$. Based on their estimates for the Chebyshev 
functions $\psi(x)$ and $\vartheta(x)$, Rosser and Schoenfeld \cite{rosserschoenfeld1975} announced to have new estimates for the $n$th prime number $p_n$ but 
they have never published the details. In the direction of \eqref{1.5}, Robin \cite[Lemme 3, Th\'{e}or\`{e}me 8]{robin1983} showed that
\begin{equation}
p_{n} \geq n(\log n + \log \log n - 1.0072629) \tag{1.8} \label{1.8}
\end{equation}
for every $n \geq 2$, and that the inequality \eqref{1.5} holds for every integer $n$ such that $2 \leq n \leq \pi(10^{11})$. Massias and Robin 
\cite[Th\'{e}or\`{e}me A]{mr} gave a series of improvements of \eqref{1.7} and \eqref{1.8}. For instance, they have found that $p_{n} \geq n(\log n + \log \log 
n - 1.002872)$ for every $n \geq 2$.
%
In his thesis, Dusart showed that the inequality 
\begin{equation}
p_n \leq n \left( \log n + \log \log n - 1 + \frac{\log \log n - 1.8}{\log n} \right) \tag{1.9} \label{1.9}
\end{equation}
holds for every $n \geq 27\,076$, see \cite[Th\'{e}or\`{e}me 1.7]{pd}. Further, he made a breakthrough concerning the inequality \eqref{1.5} by showing that 
this inequality holds for every $n \geq 2$.
The current best estimates for the $n$th prime, which correspond to the first terms in \eqref{1.2}, are also given by Dusart \cite[Propositions 5.15 and 
5.16]{dusart2017}. He used explicit estimates for Chebyshev's $\vartheta$-function to show that the inequality
\begin{equation}
p_n \leq n \left( \log n + \log \log n - 1 + \frac{\log \log n - 2}{\log n} \right), \tag{1.10} \label{1.10}
\end{equation}
which corresponds to the first four terms of \eqref{1.2}, holds for every $n \geq 688\,383$ and that
\begin{equation}
p_n \geq n \left( \log n + \log \log n - 1 + \frac{\log \log n - 2.1}{\log n} \right) \tag{1.11} \label{1.11}
\end{equation}
for every $n \geq 3$. The goal of this paper is to improve the inequalities \eqref{1.10} and \eqref{1.11} with regard to Cipolla's asymptotic expansion 
\eqref{1.2}. For this purpose, we use estimates for the quantity $1/\log p_n$ and some estimates \cite{ca2017} for the prime counting function $\pi(x)$ to 
%
obtain the following refinement of \eqref{1.10}.

\begin{thm} \label{thm101}
For every integer $n \geq 46\,254\,381$, we have
\begin{equation}
p_n < n \left( \log n + \log \log n - 1 + \frac{\log \log n - 2}{\log n} - \frac{(\log \log n)^2-6\log \log n + 10.667}{2 \log^2 n} \right). \tag{1.12} 
\label{1.12}
\end{equation}
\end{thm}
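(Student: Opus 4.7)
The strategy is to combine a sharp explicit lower bound for $\pi(x)$ from \cite{ca2017} with precise asymptotic control of $1/\log p_n$, and then to handle the remaining finite range by direct computation.

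First, I would start with an explicit lower bound of the shape
\[
\pi(x) \geq \frac{x}{\log x}\left(1+\frac{1}{\log x}+\frac{2}{\log^{2}x}+\frac{6}{\log^{3}x}+\frac{a}{\log^{4}x}\right), \qquad x \geq x_{0},
\]
with some explicit constant $a$ and threshold $x_{0}$, as available in \cite{ca2017}. Specializing to $x = p_{n}$ and using $\pi(p_{n}) = n$, one obtains by rearrangement an upper bound
\[
p_{n} \leq n \log p_{n} \cdot \Psi(1/\log p_{n}),
\]
where $\Psi(t) = (1+t+2t^{2}+6t^{3}+a t^{4})^{-1}$; expanding $\Psi$ as a truncated geometric series produces a sequence of corrections $1,-1,-1,-4,\ldots$ in successive negative powers of $\log p_{n}$.

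Next I would rewrite the right-hand side purely in terms of $\log n$ and $\log\log n$. This is the role of the estimates on $1/\log p_{n}$ advertised in the abstract: by combining the known upper bound \eqref{1.10} with the matching lower bound \eqref{1.11}, one derives two-sided explicit control of the form
\[
\log p_{n} = \log n + \log\log n - 1 + \frac{\log\log n - 2}{\log n} + \text{(small explicit remainder)},
\]
from which term-by-term estimates for $\log^{-k}p_{n}$ $(k=1,2,3,4)$ follow. Substituting these into the previous display and collecting the powers of $1/\log n$, the contribution at order $1/\log^{2}n$ assembles into $-\bigl((\log\log n)^{2}-6\log\log n + c\bigr)/2$ for some explicit $c$, which one aims to bound by $10.667$ for $n$ sufficiently large.

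The main obstacle is the tracking of constants. The polynomial $(\log\log n)^{2} - 6\log\log n$ is essentially forced by the shape of Cipolla's expansion, but pinning the absolute constant to $10.667$ rather than, say, $11$ requires every error term in both the input $\pi$-bound and the $\log p_{n}$ expansion to be handled linearly and not merely in $O$-notation; moreover, the threshold $x_{0}$ in the input estimate must be compatible with the threshold one hopes to prove. Once an asymptotic threshold $n_{1}$ is obtained beyond which the bookkeeping delivers \eqref{1.12}, the remaining range $46\,254\,381 \leq n < n_{1}$ is checked numerically against a table of primes, using the fact that the right-hand side of \eqref{1.12} is smooth and increasing while $p_{n}$ advances only by a bounded amount at each step. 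The specific threshold $46\,254\,381$ is then the smallest $n$ for which \eqref{1.12} actually holds, pinning down $10.667$ as essentially sharp within this family of bounds.
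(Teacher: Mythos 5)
Your high-level architecture matches the paper: take an explicit estimate from \cite{ca2017} that controls $p_n$ in terms of $\log p_n$, convert $\log^{-k}p_n$ into $\log n$ and $\log\log n$ via explicit estimates for $1/\log p_n$, collect the resulting powers of $1/\log n$, and sweep a finite range by computer. However, there is a genuine gap that would prevent a single pass of this kind from reaching the constant $10.667$, and a smaller misreading of the input estimate.

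The missing idea is the \emph{bootstrap}. The paper does not derive \eqref{1.12} in one collection step. It first proves an auxiliary statement (Proposition~\ref{prop207}) of the form: if one already knows $p_n < n\bigl(\log n + \log\log n - 1 + (\log\log n - 2)/\log n - (w^2-6w+a_0(n))/(2\log^2 n)\bigr)$ with an admissible function $a_0(n)$, then the same holds with $a_0(n)$ replaced by $b_0(n)$, where $b_0(n)$ is the explicit expression \eqref{2.11}. Crucially, $b_0(n)$ depends on $a_0(n)$ nonlinearly through the term $a_0(n)\bigl(1 - (w-1)/\log n - (w-2)/\log^2 n + (2w^2-12w+a_0(n))/(4\log^3 n)\bigr)/\log n$, so feeding back a better a priori input strictly improves the output. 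In Step~1 the paper takes $a_0(n) = -w^2+6w$ (which reduces the hypothesis to Dusart's bound \eqref{1.10}) and obtains $b_0(n)\geq 10.641$. In Step~2 it sets $a_0(n)=10.641$, reruns the same machinery, and only then reaches $b_0(n)\geq 10.667$. A one-shot collection of powers of $1/\log n$, as you propose, stalls at roughly the Step~1 constant and cannot reach $10.667$; the iteration is essential to the stated theorem, not an optimization. Your proposal also omits the auxiliary quantities ($A_0$, $A_1$, the functions $F_0$, $F_1$, the thresholds $N_0$, $N_1$, $N_2$) whose explicit determination (Lemmas~\ref{lem202} and~\ref{lem203}) is where most of the numerical effort is spent.

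A secondary point: the input from \cite{ca2017} is not a lower bound for $\pi(x)$ with the asymptotic coefficients $1,1,2,6,a$. The paper cites Theorem 3.8 of that reference, which is already phrased as
\[
p_n < n\left(\log p_n - 1 - \frac{1}{\log p_n} - \frac{2.85}{\log^2 p_n} - \frac{13.15}{\log^3 p_n} - \frac{70.7}{\log^4 p_n} - \frac{458.7275}{\log^5 p_n} - \frac{3428.7225}{\log^6 p_n}\right),
\]
and the coefficients $2.85$, $13.15$, $70.7$, \dots are deliberately larger in magnitude than the asymptotic ones so that the inequality holds unconditionally down to a manageable threshold. Writing the input with the clean coefficients $1,1,2,6$ and then inverting a geometric series would misrepresent both the shape of the available estimate and the size of the errors one must actually absorb, and would not produce the specific constant $10.667$.
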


Furthermore, we
%
%
%
give the following improvement of \eqref{1.11}.

\begin{thm} \label{thm102}
For every integer $n \geq 2$, we have
\begin{equation}
p_n > n \left( \log n + \log \log n - 1 + \frac{\log \log n - 2}{\log n} - \frac{(\log\log n)^2-6\log \log n + 11.508}{2\log^2 n} \right). \tag{1.13} 
\label{1.13}
\end{equation}
\end{thm}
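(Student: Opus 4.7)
The plan is to reduce the desired lower bound on $p_n$ to an upper bound on $\pi$. Let $f(n)$ denote the right-hand side of \eqref{1.13}. Because $\pi(p_n)=n$ and $\pi$ is non-decreasing, the inequality $p_n>f(n)$ is equivalent to $\pi(f(n))<n$. I would therefore invoke an explicit upper bound for $\pi(x)$ of Cipolla shape proved in \cite{ca2017}---this is the "main tool" mentioned in the abstract---of the general form
\[
\pi(x)\le \frac{x}{\log x}+\frac{x}{\log^2 x}+\frac{2x}{\log^3 x}+\frac{Cx}{\log^4 x}
\]
valid for all $x\ge x_0$ with an explicit constant $C$, and then show that the right-hand side evaluated at $x=f(n)$ is strictly less than $n$ once $n\ge N_0$ for some explicit threshold. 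The remaining cases $2\le n<N_0$ are to be handled by a direct numerical sweep over a precomputed table of primes.

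The asymptotic step is where the second main tool---sharp explicit bounds on $1/\log p_n$, hence on $1/\log f(n)$---enters. I would write $\log f(n)=\log n+\log\log n+\log(1+u)$ with $u=(\log\log n-1)/\log n+O(\log\log n/\log^2 n)$, and expand $\log(1+u)=u-u^2/2+u^3/3-\cdots$ to express $\log f(n)$ as an asymptotic series in $1/\log n$ whose coefficients are polynomials in $\log\log n$. Reciprocal powers $1/\log^k f(n)$ for $k=1,2,3,4$ are then obtained by series inversion. Multiplying by $f(n)$ and collecting contributions through order $n(\log\log n)^j/\log^3 n$ for $j=0,1,2,3$, the task is to show that the total is strictly smaller than $n$, with the remaining $n/\log^4 n$ error absorbing the slack left by the constant $C$. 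The constant $11.508$ in \eqref{1.13} is forced by this calculation: it is chosen just large enough that, after combining the Cipolla coefficients $1,1,2$ with $C$, the resulting inequality holds asymptotically.

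The main obstacle is the careful bookkeeping of the expansion. One must track polynomials in $\log\log n$ of degree up to three appearing in the coefficient of $1/\log^3 n$, and verify that the cross-terms coming from $u^2$ and $u^3$ in the expansion of $\log(1+u)$ do not degrade the sign of the leftover. A secondary obstacle is producing a workable threshold $N_0$: the explicit constants in \cite{ca2017} will dictate a concrete but large value, and the intermediate regime between \emph{small $n$ handled by computation} and \emph{large $n$ handled asymptotically} must either be covered by a single unified computation or split by falling back to Dusart's weaker bound \eqref{1.11} on a range where \eqref{1.13} can then be verified term-by-term against \eqref{1.11}.
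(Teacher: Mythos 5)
Your broad outline is recognizable --- an explicit Cipolla-shape bound from \cite{ca2017}, a logarithmic expansion, a computer sweep for small $n$ --- but two structural ingredients of the paper's argument are missing, and without them the constant $11.508$ is out of reach. First, the paper does not reduce to $\pi(f(n))<n$ at the non-prime argument $f(n)$. It instead cites \cite[Theorem 3.2]{ca2017} in the already-rearranged form
\[
p_n > n\left(\log p_n - 1 - \frac{1}{\log p_n} - \frac{3.15}{\log^2 p_n} - \frac{12.85}{\log^3 p_n} - \frac{71.3}{\log^4 p_n} - \frac{463.2275}{\log^5 p_n} - \frac{4585}{\log^6 p_n}\right),
\]
obtained by evaluating a Ces\`aro-type (not additive-polynomial) bound at $x=p_n$ so that $\pi(p_n)=n$ is exact. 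The work then consists in replacing powers of $1/\log p_n$ by expressions in $\log n$ and $\log\log n$; this is what Proposition~\ref{prop303} and its corollaries supply, together with the auxiliary inequality $\log n \ge 0.87\log p_n$ (Lemma~\ref{lem202}), which is used repeatedly to absorb low-order cross-terms. Your route instead requires a controlled inversion of $\log f(n)$, which is not what the paper does and lacks a device playing the role of Lemma~\ref{lem202}.

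Second, and more seriously: the constant $11.508$ is not obtained in a single pass by being ``chosen just large enough.'' It comes out of an iterative bootstrap that your proposal does not describe. Step 1 of the paper's proof seeds the machinery with Dusart's bound \eqref{1.11}, taking $a_1(n)=0.2\log n - (\log\log n)^2 + 6\log\log n$ as the input hypothesis \eqref{3.5}, and obtains $b_1(n)\le 11.589$; Step 2 feeds $a_1(n)=11.589$ back through the identical apparatus to get $b_1(n)\le 11.512$; Step 3 iterates once more to reach $11.508$. Each pass uses the conclusion of the previous pass as its hypothesis. A one-shot expansion of the kind you sketch would at best reproduce the first-pass constant near $11.59$. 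You do mention Dusart's \eqref{1.11} at the end of your proposal, but only as a fallback for an intermediate range of $n$; in the paper it is the seed of the iteration, not a patch between the asymptotic and computational regimes.
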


As an application, we find some refined estimates for $\vartheta(p_n)$, where $\vartheta(x) = \sum_{p \leq x} \log p$ is Chebyshev's $\vartheta$-function.

\begin{nota}
Throughout this paper, let $n$ denote a positive integer. For better readability, in the majority of the proofs we use the notation
\begin{displaymath}
w = \log \log n, \q y = \log n, \q z = \log p_n.
\end{displaymath}
\end{nota}

\section{Effective estimates for the reciprocal of $\log p_n$}

In order to prove Theorems \ref{thm101} and \ref{thm102}, we need sharp estimates for the quantity $1/\log p_n$. Cipolla \cite[p.\:139]{cp} showed that
\begin{displaymath}
\frac{1}{\log p_n} = \frac{1}{\log n} - \frac{\log \log n}{\log^2n} + o \left( \frac{1}{\log^2 n} \right).
\end{displaymath}
Concerning this asymptotic formula, we give the following inequality involving $1/\log p_n$, where the polynomials $P_1, \ldots, P_4 \in \Z[x]$ are given by
\begin{itemize}
\item $P_1(x) = 3x^2 - 6x + 5$,
\item $P_2(x) = 5x^3 - 24x^2 + 39x - 14$,
\item $P_3(x) = 7x^4 - 48x^3 + 120x^2 - 124x + 51$,
\item $P_4(x) = 9x^5 - 80x^4 + 280x^3 - 480x^2 + 405x - 124$,
\end{itemize}

\begin{prop} \label{prop204}
For every integer $n \geq 688\,383$, we have
\begin{displaymath}
\frac{1}{\log p_n} \geq \frac{1}{\log n} - \frac{\log \log n}{\log^2n} + \frac{(\log \log n)^2 - \log \log n + 1}{\log^2n\log p_n} + \frac{1}{\log p_n} 
\sum_{k=1}^4 \frac{(-1)^{k+1}P_k(\log \log n)}{k(k+1) \log^{k+2} n}.
\end{displaymath}
\end{prop}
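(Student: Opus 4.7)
The strategy exploits the factorization
\[
1 + a \;=\; \frac{(y+1)(y + w - 2)}{y^2},\qquad a := \frac{w-1}{y}+\frac{w-2}{y^2},
\]
which is verified directly since $(y+1)(y+w-2)=y^2+(w-1)y+(w-2)$. Taking logarithms,
\[
\log(1+a) \;=\; \log(1+1/y) + \log\bigl(1+(w-2)/y\bigr).
\]
For $n\geq 688\,383$ both $1/y$ and $(w-2)/y$ lie in $(0,1)$, so each term admits its Taylor expansion in powers of $1/y$, giving the closed form
\[
\log(1+a) \;=\; \sum_{k=1}^{\infty}\frac{(-1)^{k+1}\bigl(1+(w-2)^k\bigr)}{k\,y^k}.
\]
Multiplying by $(y-w)$ and regrouping leads to the key identity
\[
(y-w)\log(1+a) \;=\; (w-1)\;-\;\sum_{k=1}^{\infty}\frac{(-1)^{k+1}N_k(w)}{k(k+1)\,y^k},
\]
where $N_k(w):=k+(k+1)w+(w-2)^k\bigl((2k+1)w-2k\bigr)$.

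A direct polynomial expansion confirms $N_k(w)=P_k(w)$ for $k=1,2,3,4$.

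For $n\geq 688\,383$, Dusart's bound \eqref{1.10} gives $p_n\leq n(y+w-1+(w-2)/y)$, hence $\delta:=z-y-w\leq \log(1+a)$ (and $\delta\geq 0$ by Rosser's theorem \eqref{1.3}). Multiplying the desired inequality through by $z$, substituting $z=y+w+\delta$, and simplifying algebraically reduces the proposition to
\[
w-1-\delta(y-w) \;\geq\; \sum_{k=1}^{4}\frac{(-1)^{k+1}P_k(w)}{k(k+1)\,y^k}.
\]
Since $y>w$ in this range, $\delta(y-w)\leq (y-w)\log(1+a)$, and in view of the identity above combined with $P_k=N_k$ for $k\leq 4$, the proposition reduces to the tail inequality
\[
\sum_{k=5}^{\infty}\frac{(-1)^{k+1}N_k(w)}{k(k+1)\,y^k}\;\geq\; 0.
\]

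For $w\geq 2$, every $N_k(w)$ is strictly positive (each of its three summands is nonnegative, and the first two are strictly positive). The series is therefore alternating with a positive leading ($k=5$) term, and the alternating-series test yields the needed non-negativity provided that the moduli $b_k:=N_k(w)/(k(k+1)y^k)$ form a monotonically decreasing sequence for $k\geq 5$. This is the main technical obstacle: it amounts to the inequality $y(k+2)N_k(w) > k\,N_{k+1}(w)$ uniformly in $k\geq 5$ over the admissible range of $w$. Since $(w-2)/y$ is very small for $n\geq 688\,383$ (bounded by roughly $0.05$), the ratio $N_{k+1}(w)/N_k(w)$ is controlled uniformly by a quantity comfortably below $y(k+2)/k$ via elementary estimates on the two pieces of $N_k$, which closes the argument.
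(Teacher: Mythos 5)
Your argument is correct and takes a genuinely different, and arguably cleaner, route than the paper. Both proofs start from Dusart's bound \eqref{1.10}, which yields $\delta := z - y - w \leq \log(1+a)$ with $a = (w-1)/y + (w-2)/y^2$, and both reduce the proposition to bounding $(y-w)\log(1+a)$. But the mechanisms then diverge. The paper applies the truncated bound $\log(1+x) \leq \sum_{k=1}^7 (-1)^{k+1}x^k/k$, fully expands $(y-w)\sum_{k\leq 7}(-1)^{k+1}a^k/k$ (producing the large auxiliary polynomials $Q_1,\ldots,Q_6$ and a tail term in $y^{-14}$), discards the terms that are manifestly nonnegative for $w \geq 2$, and then invokes \cite[Lemma 2.3]{axler2013} to control the residual $P_5,P_6,P_7$ contribution. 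You instead exploit the factorization $1+a = (y+1)(y+w-2)/y^2$, which splits $\log(1+a)$ into two elementary logarithmic series and yields the exact closed-form expansion $(y-w)\log(1+a) = (w-1) - \sum_{k\geq 1}(-1)^{k+1}N_k(w)/(k(k+1)y^k)$ with $N_k(w) = k + (k+1)w + (w-2)^k\bigl((2k+1)w - 2k\bigr)$. After checking $N_k = P_k$ for $k\leq 4$ (which indeed holds, and in fact $N_5 = P_5$ too), the proposition reduces to the nonnegativity of the tail $\sum_{k\geq 5}(-1)^{k+1}N_k(w)/(k(k+1)y^k)$, which you close via the alternating-series test. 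This is self-contained and avoids both the bulky polynomial bookkeeping and the external thesis lemma.

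The one place where you are terse is the monotonicity of the moduli $b_k = N_k(w)/(k(k+1)y^k)$ for $k\geq 5$. This does hold, and your heuristic is sound, but the cleanest way to nail it is to split $N_k = A_k + B_k$ with $A_k = k + (k+1)w$ and $B_k = (w-2)^k\bigl((2k+1)w - 2k\bigr)$, both nonnegative for $w\geq 2$. A direct computation gives $A_{k+1}/A_k \leq (3k+5)/(3k+2) < (k+1)/k$, while $B_{k+1}/B_k = (w-2)\cdot\frac{(2k+3)w - 2(k+1)}{(2k+1)w - 2k} < (w-2)\cdot\frac{2k+3}{2k+1} < y\cdot\frac{k+2}{k}$, using $w-2 < y$. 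The mediant inequality $\frac{A_{k+1}+B_{k+1}}{A_k+B_k} \leq \max(A_{k+1}/A_k,\,B_{k+1}/B_k)$ then yields $N_{k+1}(w)/N_k(w) < y(k+2)/k$ uniformly in $k\geq 5$ and in the admissible range of $w$, which is precisely the required monotonicity $b_{k+1} < b_k$. With that detail filled in, the proof is complete. (As a side remark, the appeal to Rosser's theorem for $\delta \geq 0$ is not actually needed; only $y - w > 0$ and $\delta \leq \log(1+a)$ enter the reduction.)
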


\begin{proof}
Let $n$ be a integer with $n \geq 688\,383$. For convenience, we write $w = \log \log n$, $y=\log n$, and $z = \log p_n$.
Applying the inequality $\log(1+x) \leq \sum_{k=1}^7 (-1)^{k+1} x^k/k$, which holds for every $x > -1$, and the fact that $(w-1)/y + (w-2)/y^2 > -1$ 
to \eqref{1.10}, we see that
\begin{displaymath}
- y^2 + (y - w) z \leq - w^2 + (y - w) \sum_{k=1}^7 \frac{(-1)^{k+1}}{k} \left( \frac{w - 1}{y} + \frac{w - 2}{y^2} \right)^k.
\end{displaymath}
Extending the right-hand side of the last inequality, we obtain the inequality
\begin{align}
- y^2 + (y - w) z & < - w^2 + w - 1 - \frac{P_1(w)}{2y} + \frac{P_2(w)}{6y^2} - \frac{P_3(w)}{12y^3} + \frac{P_4(w)}{20y^4} - \frac{P_5(w)}{30y^5} 
+ \frac{P_6(w)}{42y^6} \tag{2.5} \label{2.5}\\
& \p{\q\q} - \frac{P_7(w)}{28y^7} - \frac{(w-2)Q_1(w)}{12y^8} - \frac{(w-2)^2Q_2(w)}{30y^9} - \frac{(w-2)^3Q_3(w)}{10y^{10}} \nonumber \\
& \p{\q\q} - \frac{(w-2)^4Q_4(w)}{6y^{11}} - \frac{(w-2)^5Q_5(w)}{6y^{12}} - \frac{(w-2)^6Q_6(w)}{7y^{13}} - \frac{w(w-2)^7}{7y^{14}}, \nonumber
\end{align}
where the polynomials $P_5, P_6, P_7, Q_1, Q_2, Q_3, Q_4, Q_5, Q_6 \in \Z[x]$ are given by
\begin{itemize}
\item $P_5(x) = 11x^6 -120x^5 + 540x^4 - 1280x^3+1680x^2 - 1146x + 325$,
\item $P_6(x) = 13x^7 - 168x^6 + 924x^5 - 2800x^4 + 5040x^3 - 5376x^2 + 3143x - 762$,
\item $P_7(x) = 4x^8 - 84x^7 + 630x^6 - 2492x^5 + 5915x^4 - 8764x^3 + 7966x^2 - 4064x + 896$,
\item $Q_1(x) = 12x^7 - 138x^6 + 676x^5-1819x^4+2914x^3 - 2782x^2 + 1468x - 328$,
\item $Q_2(x) = 90x^6 - 700x^5 + 2405x^4 - 4506x^3 + 4801x^2 - 2732x + 648$,
\item $Q_3(x) = 50x^5-275x^4+662x^3-833x^2+538x-140$,
\item $Q_4(x) = 30x^4-114x^3+181x^2-136x+40$,
\item $Q_5(x) = 18x^3-43x^2+38x-12$,
\item $Q_6(x) = 7x^2 - 8x + 2$.
\end{itemize}
For $x \geq 2$, we have $Q_i(x) \geq 0$, $1 \leq i \leq 6$, and $x(x-2)^7 \geq 0$. Combined with \eqref{2.5}, it gives
\begin{displaymath}
- y^2 + (y - w) z < - w^2 + w - 1 - \frac{P_1(w)}{2y} + \frac{P_2(w)}{6y^2} - \frac{P_3(w)}{12y^3} + \frac{P_4(w)}{20y^4} - \frac{P_5(w)}{30y^5} 
+ \frac{P_6(w)}{42y^6} - \frac{P_7(w)}{28y^7}.
\end{displaymath}
By \cite[Lemma 2.3]{axler2013}, we have $P_5(w)/30 - P_6(w)/(42y) + P_7(w)/(28y^2) \geq 0$ which completes the proof.
\end{proof}


\begin{kor} \label{kor205}
For every integer $n \geq 456\,914$, we have
\begin{displaymath}
\frac{1}{\log p_n} \geq \frac{1}{\log n} - \frac{\log \log n}{\log^2n} + \frac{(\log \log n)^2 - \log \log n + 1}{\log^2n\log p_n} 
+ \frac{P_1(\log \log n)}{2\log^3n \log p_n} - \frac{P_2(\log \log n)}{6\log^4n \log p_n}.
\end{displaymath}
\end{kor}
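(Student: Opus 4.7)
The plan is to derive Corollary~\ref{kor205} from Proposition~\ref{prop204} for $n \geq 688\,383$, and to verify the corollary directly on the remaining interval $456\,914 \leq n < 688\,383$.

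Keeping the abbreviations $w = \log\log n$, $y = \log n$, $z = \log p_n$, the right-hand side of Proposition~\ref{prop204} exceeds that of Corollary~\ref{kor205} by exactly the $k=3$ and $k=4$ summands of the sum, namely
\[
\frac{1}{z}\left(\frac{P_3(w)}{12\,y^5} - \frac{P_4(w)}{20\,y^6}\right) = \frac{5\,y\,P_3(w) - 3\,P_4(w)}{60\,y^6\,z}.
\]
So the implication reduces to the polynomial inequality $5\,y\,P_3(w) \geq 3\,P_4(w)$. Differentiating and factoring $P_3'(w) = 4(w-1)(7w^2 - 29w + 31)$ (the quadratic factor having negative discriminant) shows that $P_3$ attains its global minimum $P_3(1) = 6 > 0$, and a direct check yields $P_4(w) > 0$ for $w \geq 1$. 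Since $P_4(w)/P_3(w) = O(w)$ as $w \to \infty$ while $y = e^{w}$, the required inequality $y \geq 3 P_4(w)/(5 P_3(w))$ holds with enormous slack for all $y \geq \log(688\,383) > 13.4$; a uniform bound follows by crudely majorizing $P_4$ and minorizing $P_3$ on the relevant range of $w$.

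For the remaining roughly $230\,000$ integers in $[456\,914, 688\,383)$, the plan is a pointwise numerical verification: for each such $n$ compute $p_n$ and confirm that $1/\log p_n$ dominates the right-hand side of the inequality claimed in Corollary~\ref{kor205}.

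The main obstacle is organizational rather than conceptual: the polynomial step carries large slack, so no delicate estimation is needed there, but the finite verification demands sufficient numerical precision (preferably interval arithmetic) to rule out spurious near-failures, especially close to the threshold $n = 456\,914$ where the inequality is presumably sharpest.
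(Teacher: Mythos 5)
Your proposal is correct, and it mirrors the strategy the paper itself uses one step later: the paper's proof of Corollary \ref{kor206} is obtained from Corollary \ref{kor205} precisely by showing the dropped tail terms are nonnegative and finishing the remaining range by computer, so your derivation of Corollary \ref{kor205} from Proposition \ref{prop204} via the nonnegativity of
\begin{displaymath}
\frac{P_3(w)}{12\,y^5 z} - \frac{P_4(w)}{20\,y^6 z}
\end{displaymath}
plus a finite check on $456\,914 \le n < 688\,383$ is the natural reconstruction. The paper's own ``proof'' of Corollary \ref{kor205} merely cites Korollar~2.6 of the author's thesis, so there is no in-paper argument to compare against word for word; your reduction to $5yP_3(w) \ge 3P_4(w)$, the factorisation $P_3'(w) = 4(w-1)(7w^2 - 29w + 31)$ showing $P_3 \ge P_3(1) = 6$, and the observation that $y = e^w$ swamps the ratio $P_4/P_3$ on the range $w \ge \log\log 688\,383 \approx 2.6$ are all sound. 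The only thing left slightly informal is the phrase ``a uniform bound follows by crudely majorizing $P_4$ and minorizing $P_3$''; in a final write-up you would want to exhibit that bound explicitly (it suffices, for instance, to check $5e^w P_3(w) - 3P_4(w)$ is positive at $w = 2.59$ and has positive derivative for $w \ge 2.59$), but the slack is indeed enormous and this is routine.
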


\begin{proof}
See \cite[Korollar 2.6]{axler2013}.
\end{proof}

\begin{kor} \label{kor206}
For every integer $n \geq 71$, we have
\begin{displaymath}
\frac{1}{\log p_n} \geq \frac{1}{\log n} - \frac{\log \log n}{\log^2n} + \frac{(\log \log n)^2 - \log \log n + 1}{\log^2n\log p_n}.
\end{displaymath}
\end{kor}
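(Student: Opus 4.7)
My plan is a two-part argument: derive the stated inequality from Corollary~\ref{kor205} for all sufficiently large $n$, and close the remaining finite range by direct numerical verification.

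For every integer $n \geq 456\,914$, Corollary~\ref{kor205} already supplies the stronger bound
\begin{displaymath}
\frac{1}{\log p_n} \geq \frac{1}{\log n} - \frac{\log \log n}{\log^2 n} + \frac{(\log \log n)^2 - \log \log n + 1}{\log^2 n \log p_n} + \frac{P_1(w)}{2 y^3 z} - \frac{P_2(w)}{6 y^4 z},
\end{displaymath}
with the shorthand $w = \log \log n$, $y = \log n$, $z = \log p_n$. The conclusion of Corollary~\ref{kor206} therefore follows from Corollary~\ref{kor205} as soon as the last two terms have nonnegative sum, equivalently as soon as $3 y P_1(w) \geq P_2(w)$. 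Since $P_1(x) = 3(x-1)^2 + 2 \geq 2$ is strictly positive and $y = e^w$ dominates any polynomial in $w$, this is an elementary polynomial exercise: at $n \geq 456\,914$ one has $y \geq 13$ and $w \geq 2.56$, and combining the crude bounds $P_1(w) \geq 3w^2/4$ (valid for all real $w$, since the discriminant of $9w^2/4 - 6w + 5$ is negative) with $P_2(w) \leq 5w^3 + 39w$ reduces the required inequality to $y \geq 20w/9 + 52/(3w)$, which is obvious from $y = e^w$ and monotonicity.

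For the complementary range $71 \leq n \leq 456\,913$, the claim is a finite numerical check. One sieves the primes $p_n$ with $n$ in this range and, after clearing denominators, verifies the resulting polynomial inequality in $\log p_n$, $\log n$, and $\log \log n$ directly. In passing one also confirms that the threshold $n = 71$ is sharp, by exhibiting the failure of the inequality for some smaller~$n$.

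The step I expect to be most delicate is not the polynomial manipulation but rather the rigor of the large finite check: the interval contains about $4.5 \times 10^5$ values of $n$, and each comparison involves logarithms of transcendental arguments, so one needs either interval arithmetic or explicit control on the floating-point rounding error to rule out false positives at the margin. Beyond this bookkeeping, everything is mechanical.
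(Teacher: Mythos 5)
Your proposal matches the paper's proof essentially verbatim: the paper likewise applies Corollary~\ref{kor205} for $n \geq 456\,914$, notes that the trailing two terms $\tfrac{P_1(w)}{2y^3z}-\tfrac{P_2(w)}{6y^4z}$ have nonnegative sum (this is precisely inequality~\eqref{2.6}, which the paper states holds already for $n\ge 3$), and closes the remaining range by computer check. The only difference is that you supply an explicit (and correct) elementary justification of the sign condition, whereas the paper simply quotes it.
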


\begin{proof}
Since the inequality
\begin{equation}
\frac{P_1(\log \log n)}{2\log n} - \frac{P_2(\log \log n)}{6\log^2 n} \geq 0 \tag{2.6} \label{2.6}
\end{equation}
holds for every $n \geq 3$, Corollary \ref{kor205} implies the validity of the required inequality for every $n \geq 456\,914$. We finish by checking the 
remaining cases with a computer.
\end{proof}

Using a similar method as in the proof of Proposition \ref{prop204}, we find the following inequality involving the reciprocal of $\log p_n$.


\begin{prop} \label{prop303}
For every integer $n \geq 2$, we have
\begin{displaymath}
\frac{1}{\log p_n} \leq \frac{1}{\log n} - \frac{\log \log n}{\log^2 n} + \frac{(\log \log n)^2 - \log \log n + 1}{\log^2n \log p_n} 
+ \frac{P_8(\log \log n)}{2\log^3 n \log p_n} - \sum_{k=4}^6 \frac{P_{k+5}(\log \log n)}{2\log^k n \log p_n}.
\end{displaymath}
\end{prop}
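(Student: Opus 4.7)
The plan is to mirror the proof of Proposition \ref{prop204} with the two directions reversed. Instead of feeding in the explicit upper bound \eqref{1.10} and an upper Taylor estimate for $\log(1+x)$, I would feed in Dusart's explicit lower bound \eqref{1.11} together with a \emph{lower} Taylor estimate $\log(1+x) \geq \sum_{k=1}^{M}(-1)^{k+1}x^k/k$ (valid for $x>0$ when $M$ is even), and then rearrange algebraically to produce an upper bound for $1/z$.

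Concretely, I would proceed as follows. First, for $n$ large enough that both $w=\log\log n>1$ and the bracket in \eqref{1.11} is positive, I take logarithms of \eqref{1.11} to get
\begin{equation*}
z \;\geq\; y + w + \log\!\left(1 + \frac{w-1}{y} + \frac{w-2.1}{y^2}\right).
\end{equation*}
Next I apply the lower Taylor bound with an even number of terms (sufficient to resolve the $1/y^6$ level needed in the statement, so $M=6$ or $M=8$), expand $((w-1)/y + (w-2.1)/y^2)^k$, and collect powers of $1/y$. Multiplying through by the positive quantity $y-w$ and using $(z-y-w)(y-w)=(y-w)z-y^2+w^2$, this converts into a lower bound of the shape
\begin{equation*}
-y^2 + (y-w)z \;\geq\; -w^2 + w - 1 + \frac{P_{8}(w)}{2y} - \frac{P_{9}(w)}{2y^2} - \frac{P_{10}(w)}{2y^3} - \frac{P_{11}(w)}{2y^4} + (\text{tail}),
\end{equation*}
where the polynomials $P_8,\ldots,P_{11}\in\Z[x]$ arise from expanding the Taylor polynomial (with the $2.1$ in place of $2$) and the tail gathers the terms of order $1/y^5$ and beyond. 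The final step is to divide through by $y^2 z>0$ and recognise $((y-w)z-y^2+w^2-w+1)/(y^2z) = 1/y - w/y^2 - 1/z + (w^2-w+1)/(y^2 z)$, yielding the claimed inequality for $1/z$.

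The main obstacle is the tail. To discard it with the correct sign I need to show that the collected higher-order remainder is nonpositive once factored; as in the earlier proposition this means exhibiting auxiliary polynomials $Q_i$ (arising from $((w-1)/y+(w-2.1)/y^2)^k$ for the larger $k$) that are nonnegative for $w\geq$ a small constant, and then combining them in pairs using an $\R[x]$-positivity lemma of the Axler 2013 type (analogous to the use of \cite[Lemma 2.3]{axler2013} here). A secondary obstacle is the range: \eqref{1.11} is stated only for $n\geq 3$ and the Taylor manipulations require $w>1$, i.e.\ $n$ above some threshold; since the proposition is claimed for all $n\geq 2$, I would finish by a finite computer verification of the inequality for the remaining small $n$, exactly as is done in Corollary \ref{kor206}.
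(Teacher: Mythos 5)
Your overall plan — flip the directions of the Proposition \ref{prop204} argument, feeding in the lower bound \eqref{1.11} and a lower Taylor estimate for $\log(1+t)$, then dividing through by $y^2z$ — is exactly right and matches the paper's structure. However, you have misjudged the required depth of the Taylor expansion. You propose $M=6$ or $M=8$ ``to resolve the $1/y^6$ level,'' but the $1/y^6$ terms in the statement do not come from high powers of $(w-1)/y$; they come from the $(w-2.1)^2/y^4$ piece of the \emph{square} $\bigl((w-1)/y+(w-2.1)/y^2\bigr)^2$, which gains one extra power of $y$ from the factor $(y-w)$ and two more from dividing by $y^2z$. In fact the paper uses only $\log(1+t) \geq t - t^2/2$ (i.e.\ $M=2$): multiplying $\sum_{k=1}^{2}(-1)^{k+1}t^k/k$ with $t=(w-1)/y+(w-2.1)/y^2$ by $(y-w)$ produces, after expansion, exactly the combination $-P_8(w)/(2y)+P_9(w)/(2y^2)+P_{10}(w)/(2y^3)+P_{11}(w)/(2y^4)$, with \emph{no} residual tail whatsoever; dividing by $y^2z$ then yields the stated inequality directly. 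There is consequently nothing to discard and no need for the auxiliary $Q_i$ polynomials or an Axler-2013-style positivity lemma, which is the bulk of the technical machinery your proposal envisions. Your route would still work — using more Taylor terms only strengthens the lower bound on $z$, and the extra terms can indeed be dropped after a positivity argument since $t\geq 0$ — but it proves more than is needed at the cost of substantial extra bookkeeping. Your handling of the small-$n$ range is fine in spirit; the paper restricts to $n\geq 33$ (where $(w-1)/y+(w-2.1)/y^2\geq 0$, which is what validates the $\log(1+t)\geq t - t^2/2$ step) and closes the remaining $2\leq n\leq 32$ by direct computation.
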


\begin{proof}
First, we consider the case where $n \geq 33$. For convenience, we write again $w = \log \log n$, $y=\log n$, and $z = \log p_n$. Notice that $\log(1+t) \geq t 
- t^2/2$ for every $t \geq 0$. If we combine the last fact with \eqref{1.11} and $(w-1)/y + (w-2.1)/y^2 \geq 0$, we obtain the inequality
\begin{displaymath}
- y^2 + (y - w)z \geq - w^2 + (y - w) \sum_{k=1}^2 \frac{(-1)^{k+1}}{k} \left( \frac{w - 1}{y} + \frac{w - 2.1}{y^2} \right)^k
\end{displaymath}
which implies the required inequality. A computer check completes the proof.
\end{proof}

Proposition \ref{prop303} implies the following two corollaries. The proofs are left to the reader.

\begin{kor} \label{kor304}
For every integer $n \geq 2$, we have
\begin{displaymath}
\frac{1}{\log p_n} \leq \frac{1}{\log n} - \frac{\log \log n}{\log^2 n} + \frac{(\log \log n)^2 - \log \log n + 1}{\log^2n \log p_n} + \frac{P_8(\log \log 
n)}{2\log^3 n \log p_n} - \sum_{k=4}^5 \frac{P_{k+5}(\log \log n)}{2\log^k n \log p_n}.
\end{displaymath}
\end{kor}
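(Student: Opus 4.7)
The plan is to deduce Corollary \ref{kor304} directly from Proposition \ref{prop303}. Comparing the two statements, the only difference is that the sum on the right-hand side runs up to $k=6$ in Proposition \ref{prop303} but only up to $k=5$ in the corollary. Thus the right-hand side of Corollary \ref{kor304} exceeds that of Proposition \ref{prop303} by exactly $P_{11}(w)/(2y^6 z)$, where $P_{11}$ is the polynomial appearing as $P_{k+5}$ with $k=6$ in Proposition \ref{prop303}, and $w,y,z$ are the abbreviations fixed in the notation at the end of Section 1. Whenever this difference is non-negative, the corollary follows immediately from Proposition \ref{prop303}, so the whole proof reduces to checking that $P_{11}(w) \geq 0$ for every $n \geq 2$.

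Since $P_{11}$ arises (in the expansion of $-t^2/2$ in the derivation of Proposition \ref{prop303}, starting from $\log(1+t) \geq t - t^2/2$ and $t = (w-1)/y + (w-2.1)/y^2$) with positive leading coefficient, it has a largest real zero $w_0$, and $P_{11}(w) \geq 0$ for every $w \geq w_0$. The first step is to locate $w_0$ explicitly and set $n_0 = \lceil \exp(\exp(w_0)) \rceil$ (or $n_0 = 2$ if $w_0 \leq \log \log 2$); for every $n \geq n_0$ one then has $w \geq w_0$, so the corollary holds by the observation above. The second step is to handle the remaining finite range $2 \leq n < n_0$ by a direct computer check, using tabulated values of $p_n$, exactly as in the proof of Corollary \ref{kor206} and the tail of Proposition \ref{prop303}.

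The main obstacle is purely computational: pinning down a value of $n_0$ small enough that the finite case check is practical. If the largest real zero of $P_{11}$ turns out to be uncomfortably large, the standard remedy is to bound $P_{11}$ below by a simpler expression, for instance its leading monomial plus an error estimate valid for $w$ above some modest threshold, in order to enlarge the range where the positivity argument works and thereby shrink the region requiring explicit verification. Conceptually, however, the argument is just the ``drop a non-negative term'' manoeuvre already employed in the proof of Corollary \ref{kor206}, which is presumably why the author leaves the proof to the reader.
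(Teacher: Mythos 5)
Your proposal is correct and is exactly the argument the paper leaves to the reader: drop the $k=6$ term of Proposition~\ref{prop303}, which changes the right-hand side by $+P_{11}(\log\log n)/(2\log^6 n\,\log p_n)$, and observe this is nonnegative for $n$ large enough, then check the finitely many remaining cases by computer. The computational worry you raise at the end dissolves once you notice $P_{11}$ is written out explicitly at the start of Section~4, namely $P_{11}(x)=x^3-4.2x^2+4.41x=x(x-2.1)^2$, so $P_{11}(\log\log n)\geq 0$ already for every $n\geq 3$ (since $\log\log n\geq 0$), leaving only $n=2$ to verify directly.
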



\begin{kor} \label{kor305}
For every integer $n \geq 2$, we have
\begin{displaymath}
\frac{1}{\log p_n} \leq \frac{1}{\log n} - \frac{\log \log n}{\log^2 n} + \frac{(\log \log n)^2 - \log \log n + 1}{\log^2n \log p_n}
+ \frac{P_8(\log \log n)}{2\log^3 n \log p_n} - \frac{P_9(\log \log n)}{2\log^4 n \log p_n}.
\end{displaymath}
\end{kor}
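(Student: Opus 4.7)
My plan is to deduce Corollary~\ref{kor305} directly from Corollary~\ref{kor304}, imitating the pattern used in the proof of Corollary~\ref{kor206}. Writing $w = \log\log n$, $y = \log n$, $z = \log p_n$, the right-hand side of Corollary~\ref{kor305} differs from that of Corollary~\ref{kor304} only in that the summand $-P_{10}(w)/(2y^5 z)$ is no longer present. Since $y, z > 0$ for every $n \geq 2$, it suffices to show that $P_{10}(w) \geq 0$ for all $n \geq 2$; then Corollary~\ref{kor304} immediately yields the desired upper bound for $1/\log p_n$.

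The entire task therefore collapses to a univariate real-polynomial sign question. I would first read off the polynomial $P_{10}$ from the same expansion that underlies Proposition~\ref{prop303} (playing the role, one index higher, of $P_9$), and then show that $P_{10}(w) \geq 0$ for all $w$ above an explicit threshold $w_0$. This can be done either by writing $P_{10}$ as a sum of squares plus a manifestly non-negative tail, or by locating its real roots and checking signs on the resulting intervals. Translating $w \geq w_0$ back through $w = \log \log n$ gives the inequality for every $n \geq n_0$ with a modest bound $n_0$.

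For the finitely many remaining integers $2 \leq n < n_0$, I would finish with a direct numerical verification on a computer, exactly as in the ``remaining cases'' step of Corollary~\ref{kor206}: one evaluates both sides at each such $n$ and compares.

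The main obstacle I expect is not a deep estimate but rather the behaviour at the bottom of the range. For the smallest admissible $n$ the quantity $w = \log \log n$ can be small or even negative (indeed $\log \log 2 < 0$), a regime in which the lower-degree polynomials $P_k$ are known to fail to be non-negative, so a pure polynomial-positivity argument cannot by itself cover all $n \geq 2$. Consequently, some care is needed to choose $w_0$ large enough that the sign of $P_{10}$ is controlled, while keeping $n_0$ small enough that the residual finite check at the bottom end stays computationally trivial.
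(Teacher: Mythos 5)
Your deduction is exactly what the paper intends (the proof of Corollary~\ref{kor305} is explicitly left to the reader): discard the term $-P_{10}(\log\log n)/(2\log^5 n\,\log p_n)$ from Corollary~\ref{kor304} after verifying $P_{10}(\log\log n)\geq 0$, and settle the finitely many small $n$ by direct computation. Note there is nothing to ``read off'': $P_{10}(x)=2x^3-7.2x^2+8.4x-4.41$ is listed explicitly at the start of Section~4, and the factorization $P_{10}(x)=2(x-2.1)(x^2-1.5x+1.05)$, which the paper itself uses in the proof of Proposition~\ref{prop306}, shows that $P_{10}(w)\geq 0$ precisely for $w\geq 2.1$, i.e.\ $n\geq\exp(\exp(2.1))\approx 3513$, so the residual numerical check is indeed trivial.
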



\section{Proof of Theorem \ref{thm101}}

First, we introduce the following notation. Let the polynomials $P_1, \ldots, P_4 \in \Z[x]$ are given as in the beginning of Section 2. Let $A_0$ be a real 
number with $0.75 \leq A_0 < 1$ and let $F_0 : \N \rightarrow \R$ be defined by
\begin{displaymath}
F_0(n) = \log n - A_0\log p_n.
\end{displaymath}
From \eqref{1.1}, it follows that $F_0(n)$ is nonnegative for all sufficiently large values of $n$. So we can define
\begin{equation}
N_0 = N(A_0) = \min \{ k \in \N \mid F_0(n) \geq 0 \;\; \text{for every $n \geq k$} \}. \tag{2.1} \label{2.1}
\end{equation}
Let $A_1$ be a real number with $0 < A_1 \leq 458.7275$, and for $w = \log \log n$ let $F_1 : \N_{\geq 2} \rightarrow \R$ be given by
\begin{align*}
F_1(n) & = \frac{A_1}{\log^5p_n} + \frac{(w^2 - 3.85w + 14.15)(w^2 - w + 1)}{\log^4 n\log p_n} + \frac{2.85P_1(w)}{2\log^3 n \log^2 p_n} + \frac{2.85P_1(w)}{2 
\log^4 n \log p_n} \\
& \p{\q\q} + \left( \frac{13.15(w^2 - w + 1)}{\log^2 n \log^2 p_n} - \frac{70.7w}{\log^2 n \log^2 p_n}\right) \left( \frac{1}{\log n} + \frac{1}{\log p_n} 
\right) - \frac{P_2(w)}{6 \log^4 n \log p_n}.
\end{align*}
Then $F_1(n)$ is nonnegative for all sufficiently large values of $n$, and we can define
\begin{equation}
N_1 = N_1(A_1) = \min \{ k \in \N \mid F_1(n) \geq 0 \;\; \text{for every $n \geq k$}\}. \tag{2.2} \label{2.2}
\end{equation}
Further we set $A_2 = (458.7275-A_1)A_0^5$ and $A_3 = 3428.7225A_0^6$.
To prove Theorem \ref{thm101}, we first use a recently obtained estimate \cite{ca2017} for the prime counting function $\pi(x)$ to construct a positive integer 
$n_0$ and an arithmetic function $b_0 : \N_{\geq 2} \to \R$, both depending on some parameters, with 
$b_0(n) \to 10.7$ as $n \to \infty$ so that
\begin{displaymath}
p_n < n \left( \log n + \log \log n - 1 + \frac{\log \log n - 2}{\log n} - \frac{(\log \log n)^2-6\log \log n + b_0(n)}{2 \log^2 n} \right)
\end{displaymath}
for every $n \geq n_0$. In order to do this, let $a_0 : \N_{\geq 2} \rightarrow \R$ be an arithmetic function satisfying
\begin{equation}
a_0(n) \geq - (\log \log n)^2 + 6 \log \log n, \tag{2.7} \label{2.7}
\end{equation}
and let $N_2$ be a positive integer depending on the arithmetic function $a_0$ so that the inequalities
\begin{equation}
-1 < \frac{\log \log n-1}{\log n} + \frac{\log \log n-2}{\log^2 n} - \frac{(\log \log n)^2 - 6\log \log n + a_0(n)}{2\log^3 n} \leq 1, \tag{2.8} \label{2.8}
\end{equation}
\begin{equation}
\frac{\log \log n - 2}{\log^2 n} - \frac{(\log \log n)^2 - 6\log \log n + a_0(n)}{2\log^3 n} \geq 0, \q \text{and} \tag{2.9} \label{2.9}
\end{equation}
\begin{equation}
p_n < n \left( \log n + \log \log n - 1 + \frac{\log \log n - 2}{\log n} - \frac{(\log \log n)^2 - 6\log \log n + a_0(n)}{2 \log^2 n} \right) \tag{2.10} 
\label{2.10}
\end{equation}
hold simultaneously for every $n \geq N_2$. Now we set
\begin{align*}
G_0(x) & = \frac{2x^3 - 21x^2 + 82.2x - 98.9}{6e^{3x}} - \frac{x^4 - 14x^3 + 53.4x^2 - 100.6x + 17}{4e^{4x}} \\
& \p{\q\q} + \frac{2x^5 - 10x^4 + 35x^3 - 110x^2 + 150x - 42}{10e^{5x}} - \frac{3x^4 - 44x^3 + 156x^2 - 96x + 64}{24e^{6x}},
\end{align*}
and for $w = \log \log n$ we define
\begin{align}
b_0(n) & = 10.7 + \frac{2A_2}{\log^3 n} + \frac{2A_3}{\log^4 n} + \frac{a_0(n)}{\log n} \left( 1 - \frac{w- 1}{\log n} - \frac{w-2}{\log^2 n} + \frac{2w^2 - 
12w + a_0(n)}{4\log^3 n}\right) \tag{2.11} \label{2.11} \\
& \p{\q\q} - 2\,G_0(w)\log^2 n + \frac{A_0((5.7A_1+8.7)w^2 - (32A_0 + 38)w + 147.1A_0 + 10.7)}{\log^2 n} \nonumber \\
& \p{\q\q} + \frac{2 \cdot 70.7A_0^3(w^2-w+1)}{\log^4 n} + \frac{2 \cdot 70.7A_0^4(w^2-w+1)}{\log^4 n}. \nonumber
\end{align}
Then we obtain the following

\begin{prop} \label{prop207}
For every integer $n \geq \max \{ N_0, N_1, N_2, 841\,424\,976 \}$, we have
\begin{displaymath}
p_n < n \left( \log n + \log \log n - 1 + \frac{\log \log n - 2}{\log n} - \frac{(\log \log n)^2-6\log \log n + b_0(n)}{2 \log^2 n} \right).
\end{displaymath}
\end{prop}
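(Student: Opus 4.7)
The plan is to bootstrap: starting from the upper bound \eqref{2.10} governed by an abstract $a_0(n)$, combine it with the explicit lower bound for $\pi(x)$ from \cite{ca2017}, evaluated at $x = p_n$ (so that $\pi(p_n) = n$), and extract a sharper upper bound for $p_n$ governed by $b_0(n)$.

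First I would apply the lower bound for $\pi(x)$ from \cite{ca2017} at $x = p_n$. Using $\pi(p_n) = n$ and solving for $p_n$ yields an inequality of the form $p_n < n z \cdot R(1/z)$ with $z = \log p_n$, where $R$ arises from inverting a truncated Cipolla-type sum; the threshold $n \geq 841\,424\,976$ is precisely the admissibility range of this estimate, and the parameter $A_1 \leq 458.7275$ together with $A_2 = (458.7275 - A_1)A_0^5$ and $A_3 = 3428.7225\, A_0^6$ quantify the size of the tail of this inversion at the orders we need to retain. Next I would convert powers of $1/z$ into powers of $1/y$ with $y = \log n$. For the leading $1/z$ I would substitute the lower bound of Proposition \ref{prop204} (and Corollaries \ref{kor205}, \ref{kor206} for the subsidiary powers), which introduces the correction terms built from the polynomials $P_1, \ldots, P_4$. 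The residue between the true value of $1/z$ and the substituted expansion produces mixed error terms; these are exactly what $F_1(n)$ packages, and the hypothesis $n \geq N_1$ guarantees that they are absorbed into the coefficients of $b_0(n)$. For the remaining $z^{-k}$ versus $y^{-k}$ mismatch in higher-order terms I would use $F_0(n) \geq 0$, that is $y \geq A_0 z$, trading $z^{-k}$ for $A_0^k y^{-k}$; this accounts in particular for the $A_0^3, A_0^4, A_0^5, A_0^6$ factors and for the polynomial $A_0((5.7 A_1 + 8.7) w^2 - (32 A_0 + 38) w + 147.1 A_0 + 10.7)/y^2$ inside \eqref{2.11}.

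Finally I would replace the leading factor $z = y + \log(p_n/n)$ by expanding $\log(p_n/n)$ via \eqref{2.10}. The logarithm of the bracketed expression is expanded using $\log(1+t) \leq t - t^2/2 + t^3/3 - t^4/4 + \ldots$, which is applicable since \eqref{2.8} places $t$ in $(-1,1]$, while \eqref{2.9} controls signs so that the truncation errors point the right way. The truncation remainder is exactly $-2\,G_0(w) y^2$ after rescaling (recall $e^w = y$, so $G_0(w)$ is a polynomial in $w$ divided by powers of $y$); expanding the square of $t$ produces the $a_0$-dependent factor $a_0(n)(1 - (w-1)/y - (w-2)/y^2 + (2w^2 - 12w + a_0(n))/(4y^3))/y$ that multiplies the linear-in-$a_0(n)$ contribution. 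Collecting everything and matching term-by-term with \eqref{2.11} yields the inequality for $n \geq \max\{N_0, N_1, N_2, 841\,424\,976\}$.

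The main obstacle I expect is the coordination of the two expansions: the $1/z$-expansion and the $\log p_n$-expansion must be pushed to compatible orders so that the residual mixed terms are precisely those absorbed by the single nonnegativity condition $F_1(n) \geq 0$, and the $A_0$-shifts coming from $F_0(n) \geq 0$ must line up with the exponents of $A_0$ already prescribed inside $b_0(n)$. The numerical constants $70.7$, $13.15$, $2.85$, $458.7275$, $3428.7225$ in the definitions of $F_1$ and $b_0$ are tuned precisely so that the mixed-error coefficients match; once the substitutions have been made the remaining task is a lengthy but mechanical collection of polynomial terms in $w$, $y$, and $z$, and no new analytic input is needed.
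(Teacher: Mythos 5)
Your outline reproduces the paper's proof essentially step for step: you start from the explicit upper bound for $p_n$ in terms of $\log p_n$ from \cite{ca2017} (the paper's inequality \eqref{2.12}), convert powers of $1/\log p_n$ into $1/\log n$-expansions via Proposition \ref{prop204} and Corollary \ref{kor205}, absorb the residual cross-terms with the nonnegativity of $F_0$ and $F_1$ (the paper's \eqref{2.17}--\eqref{2.20}), and then replace $\log p_n$ by a truncated $\log(1+t)$ expansion derived from \eqref{2.10}, exactly as the paper does. One small caveat: your closing claim that ``no new analytic input is needed'' overstates the mechanics slightly, since the paper also relies on the auxiliary polynomial inequalities \eqref{2.3} (Lemma \ref{lem201}) and \eqref{2.6} to kill several leftover cross-terms before the pieces line up with $b_0(n)$; but that is a minor gloss and the overall architecture of your proof is the paper's.
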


In order to prove this proposition, we need the following lemma. The proof is left to the reader.

\begin{lem} \label{lem201}
For every $x \geq 2.11$, we have
\begin{equation}
\frac{(x^2 - 3.85x + 14.15)P_1(x)}{2} - \frac{2.85 P_2(x)}{3} + \frac{P_3(x)}{12} - \frac{(x^2 - 3.85x + 14.15)P_2(x)}{6e^x} - \frac{P_4(x)}{20e^x} \geq 0. 
\tag{2.3} \label{2.3}
\end{equation}
\end{lem}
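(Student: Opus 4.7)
The plan is to rewrite \eqref{2.3} as $L(x) \geq 0$, where $L(x) = A(x) - e^{-x}B(x)$ with
\begin{align*}
A(x) &= \frac{(x^2-3.85x+14.15)P_1(x)}{2} - \frac{2.85\, P_2(x)}{3} + \frac{P_3(x)}{12}, \\
B(x) &= \frac{(x^2-3.85x+14.15)P_2(x)}{6} + \frac{P_4(x)}{20}.
\end{align*}
Note that $A$ is a polynomial of degree $4$ with positive leading coefficient, and $B$ is a polynomial of degree $5$.

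First, I would show that $A$ is positive and strictly increasing on $[2.11,\infty)$. After expanding, one checks that $A''(x)$ is a quadratic whose discriminant turns out to be negative, so $A''(x)>0$ for every real $x$; hence $A'$ is strictly increasing. A direct computation yields $A'(2.11)>0$, so $A'(x)>0$ for every $x\geq 2.11$, whence $A(x)\geq A(2.11)$, a concrete positive constant obtained by substitution.

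Next, I would bound $e^{-x}B(x)$ uniformly on $[2.11,\infty)$. The idea is to exhibit an explicit constant $C>0$ with $B(x)\leq Cx^5$ for every $x\geq 2.11$ (obtained by comparing the coefficients of $B$ with the monomial $x^5$), and then invoke $\max_{x\geq 0}x^5 e^{-x}=5^5 e^{-5}$ to get $e^{-x}B(x)\leq C\cdot 5^5 e^{-5}$ throughout the interval. A numerical check shows that this upper bound is comfortably smaller than $A(2.11)$. Combining both estimates gives
\[
L(x) = A(x) - e^{-x}B(x) \geq A(2.11) - C\cdot 5^5 e^{-5} \geq 0
\]
for every $x\geq 2.11$, as required.

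The main obstacle is that $e^{-x}B(x)$ is \emph{not} monotonic on $[2.11,\infty)$: the equation $B'(x)=B(x)$ has a root strictly inside the interval (one can check numerically that $(e^{-x}B)'$ changes sign somewhere around $x\approx 2.4$ and again near $x\approx 7$), so one cannot simply estimate $e^{-x}B(x)\leq e^{-2.11}B(2.11)$. The crude bound through $\max x^5 e^{-x}$ sidesteps this at the price of some slack, but the gap between $A(2.11)$ and the global maximum of $e^{-x}B(x)$ on $[2.11,\infty)$ is wide enough to absorb the loss. Should the slack prove too tight, one could split $[2.11,\infty)$ into $[2.11,M]$ and $[M,\infty)$ for a suitable $M$, bounding $e^{-x}B(x)$ directly on the first subinterval via its interior critical point and using pure exponential decay on the second.
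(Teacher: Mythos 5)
The structural decomposition $L(x)=A(x)-e^{-x}B(x)$ and your analysis of the polynomial part are sound: $A$ is a quartic with leading coefficient $25/12$, $A''(x)=25x^2-105.15x+136.15$ has negative discriminant so $A'$ is increasing, $A'(2.11)>0$, and hence $A$ is increasing on $[2.11,\infty)$ with $A(2.11)\approx 22.81$.

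The gap is in the exponential part, and it is a real one, not just slack. The leading coefficient of $B$ is $5/6+9/20=77/60\approx 1.283$, and one checks that $B(x)/x^5 \to 77/60$ from below as $x\to\infty$; hence any constant $C$ with $B(x)\le Cx^5$ for \emph{all} $x\ge 2.11$ must satisfy $C\ge 77/60$. But then $C\cdot 5^5 e^{-5}\ge \frac{77}{60}\cdot 3125\,e^{-5}\approx 27.0$, which exceeds $A(2.11)\approx 22.8$. So the chain of inequalities $L(x)\ge A(2.11)-C\cdot 5^5e^{-5}\ge 0$ does not close, and the ``numerical check shows comfortably smaller'' assertion is simply false. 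The lemma itself is true --- the actual maximum of $e^{-x}B(x)$ on $[2.11,\infty)$ is only about $6.3$ (attained near $x\approx 6.2$), well below $22.8$ --- but the uniform bound $B(x)\le Cx^5$ discards the huge cancellation in $B$ for moderate $x$, and the loss is fatal. To repair this you would need to actually carry out the interval-splitting you mention only as a fallback, or else bound $e^{-x}B(x)$ directly by locating its critical points and checking the tail. (The paper leaves this lemma's proof to the reader, so there is no published argument to compare against.)
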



Now we give a proof of Proposition \ref{prop207}.

\begin{proof}[Proof of Proposition \ref{prop207}]
Let $n \geq \max \{ N_0, N_1, N_2, 841\,424\,976 \}$. By \cite[Theorem 3.8]{ca2017}, we have
\begin{equation}
p_n < n \left( \log p_n - 1 - \frac{1}{\log p_n} - \frac{2.85}{\log^2p_n} - \frac{13.15}{\log^3p_n} - \frac{70.7}{\log^4p_n} - \frac{458.7275}{\log^5p_n} 
- \frac{3428.7225}{\log^6p_n} \right). \tag{2.12} \label{2.12}
\end{equation}
For convenience, we write $w = \log \log n$, $y = \log n$, and $z = \log p_n$.
By Corollary \ref{kor205}, we have
\begin{equation}
\frac{1}{z^2} \geq \frac{1}{yz} - \frac{w}{y^2z} + \frac{w^2-w+1}{y^2z^2} + \frac{P_1(w)}{2y^3z^2} - \frac{P_2(w)}{6y^4z^2}. \tag{2.13} \label{2.13}
\end{equation}
Again using Corollary \ref{kor205}, we get
\begin{equation}
\frac{1}{yz} \geq \Phi_1(n) = \frac{1}{y^2} - \frac{w}{y^3} + \frac{w^2-w+1}{y^3z} + \frac{P_1(w)}{2y^4z} - \frac{P_2(w)}{6y^5z}. \tag{2.14} \label{2.14}
\end{equation}
Applying \eqref{2.14} to \eqref{2.13}, we see that
\begin{equation}
\frac{1}{z^2} \geq \Phi_2(n) = \frac{1}{y^2} - \frac{w}{y^3} - \frac{w}{y^2z} + \frac{w^2-w+1}{y^3z} + \frac{w^2-w+1}{y^2z^2} + \left( \frac{P_1(w)}{2y^3z} - 
\frac{P_2(w)}{6y^4z} \right) \left( \frac{1}{y} + \frac{1}{z} \right). \tag{2.15} \label{2.15}
\end{equation}
Now \eqref{2.6} implies that
\begin{equation}
\frac{1}{z^2} \geq \Phi_3(n) = \frac{1}{y^2} - \frac{w}{y^3} - \frac{w}{y^2z} + \frac{w^2-w+1}{y^3z} + \frac{w^2-w+1}{y^2z^2}. \tag{2.16} \label{2.16}
\end{equation}
We assumed $n \geq N_0$. Hence $F_0(n) \geq 0$, which is equivalent to
\begin{equation}
\frac{A_0}{y} \leq \frac{1}{z}. \tag{2.17} \label{2.17}
\end{equation}
From \eqref{2.17} and the fact that $2.85x^2 - 16x + 73.55 \geq 0$ for every $x \geq 0$, it follows
\begin{equation}
\frac{2.85w^2-16w+73.55}{z^2} \geq \frac{A_0(5.7w^2 - 32w + 147.1)}{2yz}. \tag{2.18} \label{2.18}
\end{equation}
Let $f(x) = (5.7A_0+8.7)x^2 - (32A_0 + 38)x + 147.1A_0 + 10.7$. Since $0.75 \leq A_0 < 1$, we get $f(x) \geq 12.975x^2 - 70x + 121.025 \geq 0$ for every $x 
\geq 0$. Using \eqref{2.17} and \eqref{2.18}, we get
\begin{align}
& \frac{2.85w^2 - 16w + 73.55}{z^2} + \frac{8.7w^2 - 38w + 10.7}{2yz} \geq \frac{A_0f(w)}{2y^2}. \tag{2.19} \label{2.19}
\end{align}
We recall that $A_2 = (458.7275-A_1)A_0^5$ and $A_3 = 3428.7225A_0^6$. Hence \eqref{2.17} implies that
\begin{equation}
\frac{A_2}{y^5} + \frac{A_3}{y^6} + \frac{70.7A_0^3}{y^6} + \frac{70.7A_0^4}{y^6} \leq \frac{458.7275 - A_1}{z^5} + \frac{3428.7225}{z^6} + \frac{70.7}{y^3z^3} 
+ \frac{70.7}{y^2z^4}. \tag{2.20} \label{2.20}
\end{equation}
Now we apply \eqref{2.19} and \eqref{2.20} to \eqref{2.11} and see that
\begin{align}
\frac{10.7 - b_0(n)}{2y^2} & + \frac{2.85(w^2 - w + 1)}{y^2z^2} - \frac{13.15w}{y^2z^2} + \frac{70.7}{y^2z^2} + \frac{8.7w^2 - 38w + 10.7}{2y^3z} + 
\frac{458.7275}{z^5} \tag{2.21} \label{2.21} \\
& \p{\q\q} - \frac{A_1}{z^5} + \frac{3428.7225}{z^6} + \frac{70.7(w^2-w+1)}{y^2z^3} \left( \frac{1}{y} + \frac{1}{z} \right) \nonumber \\
& \geq G_0(w) - \frac{a_0(n)}{2y^3} \left( 1 - \frac{w- 1}{y} - \frac{w-2}{y^2} + \frac{2w^2 - 12w + a_0(n)}{4y^3}\right). \nonumber
\end{align}
The inequality \eqref{2.6} tells us that
\begin{equation}
\frac{13.15}{z} \left( \frac{P_1(w)}{2y^3z} - \frac{P_2(w)}{6y^4z} \right)\left( \frac{1}{y} + \frac{1}{z} \right) \geq 0. \tag{2.22} \label{2.22}
\end{equation}
Adding the left-hand side of \eqref{2.22} and the left-hand side of \eqref{2.3} with $x = w$ to the left-hand side of \eqref{2.21}, we get
\begin{align*}
\frac{5.35}{y^2} &- \frac{b_0(n)}{2y^2} + \frac{2.85(w^2 - w + 1)}{y^2z^2} - \frac{13.15w}{y^2z^2} + \frac{70.7}{y^2z^2} + \frac{8.7w^2 - 38w + 10.7}{2y^3z} + 
\frac{458.7275}{z^5} + \frac{3428.7225}{z^6} \\
& \p{\q\q} - \frac{A_1}{z^5} + \frac{70.7(w^2 - w + 1)}{y^2z^3} \left( \frac{1}{y} + \frac{1}{z} \right) + \frac{13.15}{z} \left( \frac{P_1(w)}{2y^3z} - 
\frac{P_2(w)}{6y^4z} \right)\left( \frac{1}{y} + \frac{1}{z} \right) - \frac{2.85 P_2(w)}{6 y^5z} \\
& \p{\q\q} - \frac{2.85P_2(w)}{6y^4z^2} + \frac{(w^2-3.85w+14.15)P_1(w)}{2 y^5z} - \frac{(w^2-3.85w+14.15)P_2(w)}{6y^6z} + \frac{P_3(w)}{12y^5z} - 
\frac{P_4(w)}{20y^6z} \\
& \geq G_0(w) - \frac{a_0(n)}{2y^3} \left( 1 - \frac{w- 1}{y} - \frac{w-2}{y^2} + \frac{2w^2 - 12w + a_0(n)}{4y^3}\right).
\end{align*}
Since $n \geq N_1$, we have $F_1(n) \geq 0$. Now we add $F_1(n)$ to the left-hand side of the last inequality, use the identity $8.7w^2 - 38w + 10.7 = P_1(w) + 
2\cdot 2.85(w^2-w+1) - 2 \cdot 13.15w$, and collect all terms containing the number $70.7$ and the term $w^2-3.85w+14.15$, respectively, to get
\begin{align*}
\frac{5.35}{y^2} &- \frac{b_0(n)}{2y^2} + \frac{2.85(w^2 - w + 1)}{y^2z^2} - \frac{13.15w}{y^2z^2} + \frac{70.7}{z^2} \cdot \Phi_3(n) + \frac{458.7275}{z^5} + 
\frac{3428.7225}{z^6} + \frac{2.85(w^2 - w + 1)}{y^3z} \\
& \p{\q\q} - \frac{13.15w}{y^3z} + \left( 2.85 + \frac{13.15}{z} \right) \left( \frac{P_1(w)}{2y^3z} - \frac{P_2(w)}{6y^4z} \right)\left( \frac{1}{y} + 
\frac{1}{z} \right) + \frac{w^2-3.85w+14.15}{y} \cdot \Phi_1(n) \\
& \p{\q\q} + \frac{P_1(w)}{2y^3z} - \frac{P_2(w)}{6 y^4z} + \frac{P_3(w)}{12y^5z} - \frac{P_4(w)}{20y^6z} + \frac{13.15(w^2 - w + 1)}{y^2z^2}\left( \frac{1}{y} 
+ \frac{1}{z} \right) - \frac{2.85w}{y^3} \\
& \geq \widetilde{G_0}(w) - \frac{a_0(n)}{2y^3} \left( 1 - \frac{w- 1}{y} - \frac{w-2}{y^2} + \frac{2w^2 - 12w + a_0(n)}{4y^3}\right),
\end{align*}
where
\begin{displaymath}
\widetilde{G_0}(x) = G_0(x) + \frac{x^2-3.85x + 14.15}{e^{3x}}- \frac{x^3 - 3.85x^2 + 14.15x}{e^{4x}} - \frac{2.85x}{e^{3x}}.
\end{displaymath}
Now we use \eqref{2.14} and \eqref{2.16} and collect all terms containing the numbers $2.85$ and $13.15$ to see that
\begin{align*}
\frac{2.5}{y^2} &- \frac{b_0(n)}{2y^2} + \left( 2.85 + \frac{13.15}{z} \right) \Phi_2(n) + \frac{70.7}{z^4} + \frac{458.7275}{z^5} + \frac{3428.7225}{z^6} 
+ \frac{w^2-w+1}{y^2z} \\
& \p{\q\q} + \frac{P_1(w)}{2y^3z} - \frac{P_2(w)}{6 y^4z} + \frac{P_3(w)}{12y^5z} - \frac{P_4(w)}{20y^6z} \\
& \geq \widetilde{G_0}(w) - \frac{a_0(n)}{2y^3} \left( 1 - \frac{w- 1}{y} - \frac{w-2}{y^2} + \frac{2w^2 - 12w + a_0(n)}{4y^3}\right).
\end{align*}
Applying \eqref{2.15} and Proposition \ref{prop204}, we get
\begin{align*}
\frac{2.5}{y^2} & - \frac{b_0(n)}{2y^2} + \frac{2.85}{z^2} + \frac{13.15}{z^3} + \frac{70.7}{z^4} + \frac{458.7275}{z^5} + \frac{3428.7225}{z^6} - \frac{1}{y} 
+ \frac{w}{y^2} + \frac{1}{z} \\
& \geq \widetilde{G_0}(w) - \frac{a_0(n)}{2y^3} \left( 1 - \frac{w- 1}{y} - \frac{w-2}{y^2} + \frac{2w^2 - 12w + a_0(n)}{4y^3}\right).
\end{align*}
A straightforward calculation shows that the last inequality is equivalent to
\begin{align*}
- \frac{1}{y} & - \frac{w^2 - 4w - (4-b_0(n))}{2y^2} + \frac{1}{z} + \frac{2.85}{z^2} + \frac{13.15}{z^3} + \frac{70.7}{z^4} + \frac{458.7275}{z^5} + 
\frac{3428.7225}{z^6} \nonumber\\
& \geq - \frac{w^2 - 6w+a_0(n)}{2y^3} - \frac{1}{2} \left( \frac{w - 1}{y} + \frac{w - 2}{y^2} - \frac{w^2 - 6w + a_0(n)}{2y^3} \right)^2 + \frac{1}{3} \left(
\frac{w - 1}{y} + \frac{w - 2}{y^2} \right)^3 \nonumber\\
& \p{\q\q} - \frac{1}{4} \left( \frac{w - 1}{y} \right)^4 + \frac{1}{5} \left(  \frac{w - 1}{y} \right)^5.
\end{align*}
We add $(w - 1)/y + (w - 2)/y^2$ to both sides of this inequality. Since $\log(1+x) \leq \sum_{k=1}^5 (-1)^{k+1}x/k$ for every $x > -1$, $g(x) = x^3/3$ is 
increasing, and $h(x)= - x^4/4 + x^5/5$ is decreasing on the interval $[0,1]$, we can use \eqref{2.7}--\eqref{2.9} to get
\begin{align*}
y & + w - 1 + \frac{w - 2}{y} - \frac{w^2 - 6w + b_0(n)}{2y^2} + \frac{1}{z} + \frac{2.85}{z^2} + \frac{13.15}{z^3} + \frac{70.7}{z^4} + \frac{458.7275}{z^5} + 
\frac{3428.7225}{z^6} \\
& \geq y + w - 1 + \log \left( 1 + \frac{w - 1}{y} + \frac{w - 2}{y^2} - \frac{w^2 - 6w + a_0(n)}{2y^3} \right).
\end{align*}
Finally, we use \eqref{2.10} and \eqref{2.12} to arrive at the desired result.
\end{proof}

Next we use Proposition \ref{prop207} and the following both lemmata to prove Theorem \ref{thm101}. In the irst lemma we determine the value of $N_0$ for $A_0 = 
0.87$.

\begin{lem} \label{lem202}
For every integer $n \geq 1\,338\,564\,587$, we have
\begin{displaymath}
\log n \geq 0.87 \log p_n.
\end{displaymath}
\end{lem}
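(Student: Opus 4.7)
The plan is to combine Dusart's upper bound (1.10) with a monotonicity argument in $u = \log n$, and then settle a small finite window by direct computation. The target inequality $\log n \geq 0.87 \log p_n$ is equivalent to $\log p_n \leq \log n / 0.87$. Using (1.10), which is valid for every $n \geq 688\,383$, this reduces to the one-variable inequality $0.13 \log n \geq 0.87 \log g(n)$, where $g(n) := \log n + \log \log n - 1 + (\log \log n - 2)/\log n$.

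Substituting $u = \log n$, I would put $\Psi(u) := 0.13\, u - 0.87 \log H(u)$ with $H(u) := u + \log u - 1 + (\log u - 2)/u$. A direct differentiation gives
\[
\Psi'(u) = 0.13 - \frac{0.87\, H'(u)}{H(u)}, \qquad H'(u) = 1 + \frac{1}{u} + \frac{3 - \log u}{u^2}.
\]
For $u \geq 21$, the term $(3 - \log u)/u^2$ is nonpositive, so $H'(u) \leq 1 + 1/u$; simultaneously $H(u) \geq u + \log u - 1 \geq u + 2$. These two estimates give $\Psi'(u) \geq 0.13 - 0.87(u+1)/(u(u+2))$, which is strictly positive for all $u \geq 21$, so $\Psi$ is strictly increasing on $[21, \infty)$. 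Consequently, once $\Psi(\log N_1) \geq 0$ is verified at a single point $N_1$, the desired bound $\log n \geq 0.87 \log p_n$ holds for every $n \geq N_1$ through (1.10).

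The main obstacle is that the claimed threshold is essentially tight against (1.10): at $n_0 := 1\,338\,564\,587$ a precise numerical evaluation shows that $\Psi(\log n_0)$ is negative by roughly $10^{-4}$, so the analytic chain built on (1.10) alone does not quite close the gap exactly at the stated $n_0$. I would handle this by choosing $N_1$ slightly above $n_0$, namely as the smallest integer where $\Psi(\log N_1) \geq 0$ holds with a modest safety margin, settle the range $n \geq N_1$ by the analytic step above, and then verify $\log n \geq 0.87 \log p_n$ directly on the finite window $n_0 \leq n < N_1$ against a table of primes; the actual values of $p_n$ lie strictly below the Dusart upper bound, which supplies the tiny amount of slack needed to clear the threshold on that window. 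A matching check that the inequality fails at $n = n_0 - 1$ (equivalently, that $p_{n_0 - 1}^{0.87} > n_0 - 1$) confirms the minimality of $n_0$ as demanded by the definition (2.1).
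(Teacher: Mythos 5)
Your argument is essentially identical to the paper's: the paper defines $f(x) = e^x - 0.87\bigl(e^x + x + \log(1 + (x-1)e^{-x} + (x-2)e^{-2x})\bigr)$ with $x = \log\log n$, which under the substitution $u = e^x = \log n$ is exactly your $\Psi(u)$, and it likewise shows $f' \geq 0$ past a cutoff, verifies positivity at one point ($f(3.046) \geq 0.00137$, i.e.\ $u \approx 21.03$), and closes the short window down to $n = 1\,338\,564\,587$ by computer. Your observation that the analytic bound alone falls just short of $n_0$ and your plan to patch the finite window numerically (plus the optional minimality check at $n_0 - 1$) match the paper's treatment.
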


\begin{proof}
We set
\begin{displaymath}
f(x) = e^x - 0.87 \left( e^x + x + \log \left( 1 + \frac{x-1}{e^x} + \frac{x-2}{e^{2x}} \right) \right).
\end{displaymath}
Since $f'(x) \geq 0$ for every $x \geq 2.5$ and $f(3.046) \geq 0.00137$, we see that $f(x) \geq 0$ for every $x \geq 3.046$. Substituting $x = \log \log n$ in 
$f(x)$ and using \eqref{1.10}, we see that the desired inequality holds for every $n \geq \exp(\exp(3.046))$. We check the remaining cases with a computer.
\end{proof}

Now we use Lemma \ref{lem202} to find the exact value of $N_1$ for $A_1 = 155.32$. 

\begin{lem} \label{lem203}
Let $A_1 = 155.32$. Then $F_1(n) \geq 0$ for every $n \geq 100\,720\,878$.
\end{lem}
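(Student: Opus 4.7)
The plan is to split the proof into an asymptotic bound valid for $n$ above an explicit threshold together with a direct computer check on the remaining finite range. With the abbreviations $y=\log n$, $w=\log\log n$, and $z=\log p_n$, every term of $F_1(n)$ has $z$ only in the denominator, so multiplying through by $yz^{4}$ turns $F_1(n)\ge 0$ into the equivalent inequality
\begin{displaymath}
\frac{A_1}{r}+\alpha(w)\,r^{3}+\frac{2.85\,P_1(w)}{2}\,r^{2}+\gamma(w)(r^{2}+r)\;\ge\;0,
\end{displaymath}
where $r=z/y$, $\alpha(w)=(w^{2}-3.85w+14.15)(w^{2}-w+1)+\frac{2.85}{2}P_1(w)-\frac{1}{6}P_2(w)$, and $\gamma(w)=13.15(w^{2}-w+1)-70.7\,w$. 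This rewriting reduces matters to a rational function in the single auxiliary variable $r$, whose coefficients are polynomials in $w$.

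Since $p_n\ge n$ one always has $r\ge 1$, and Lemma \ref{lem202} supplies the complementary bound $r\le 1/0.87$ as soon as $n\ge 1\,338\,564\,587$. Using this double-sided control of $r$, I would bound each term in the direction that produces the weakest inequality: the $A_1/r$ term is minimized at $r=1/0.87$, while the $\alpha(w)r^{3}$ and $\frac{2.85}{2}P_1(w)\,r^{2}$ terms (whose $w$-coefficients are positive by the discriminant computations already used in the paper) are minimized at $r=1$. The $\gamma$-term is handled by sign analysis: $\gamma(w)$ is negative precisely on $(0.16,6.22)$, so in this regime one uses $r^{2}+r\le (1/0.87)^{2}+1/0.87$, while for $w>6.22$ one uses $r^{2}+r\ge 2$. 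The outcome is an explicit lower bound $yz^{4}F_1(n)\ge\Phi(w)$ depending only on $w$.

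A direct examination then shows that $\Phi(w)\ge 0$ for every $w\ge w_{0}$ for some explicit $w_{0}$: the leading positive contribution $\alpha(w)\sim w^{4}$ eventually dominates the negative contributions from $\gamma(w)(r^{2}+r)$ in its negativity range and from the $-P_2(w)/6\sim -5w^{3}/6$ term inside $\alpha$, and the value $A_1=155.32$ is exactly calibrated to make this work down to the smallest relevant $w$. This yields $F_1(n)\ge 0$ for every $n\ge\max\{1\,338\,564\,587,\exp(\exp(w_{0}))\}$. For the remaining finite range starting at $100\,720\,878$, I would verify the inequality by direct computation at each integer $n$, which is tractable since this range involves fewer than roughly $10^{9}$ integers and each evaluation requires only the primes $p_n$ in this zone.

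The main obstacle will be the simultaneous calibration of $A_1$ and the effective threshold: a smaller $A_1$ fails to compensate the negative contributions near the awkward values of $w$ slightly above $\log\log(10^{8})\approx 2.91$, where $|\gamma(w)|$ is largest relative to the competing positive terms in $\Phi(w)$, while a larger $A_1$ would push the asymptotic threshold above $100\,720\,878$ and invalidate the feasibility of completing the finite check at the claimed value of $N_1$. In particular, the transition interval $w\in[2.91,3.5]$, where one is simultaneously below the Lemma \ref{lem202} regime and has $\gamma(w)$ strongly negative, is the pinch point that dictates the choice $A_1=155.32$.
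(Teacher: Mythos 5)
Your algebraic reduction of $yz^4 F_1(n)$ to the form $A_1/r+\alpha(w)r^3+\tfrac{2.85}{2}P_1(w)r^2+\gamma(w)(r^2+r)$ with $r = z/y$ is correct, and the overall plan --- explicit two-sided control of $r$, reduce to a one-variable inequality in $w$, check the remaining finite range numerically --- is indeed the shape of the paper's argument. However, there is a genuine gap: the lower bound $r \geq 1$ drawn from $p_n \geq n$ is too weak for the term-by-term bounding scheme to close. If each term is bounded independently as you describe, at the critical value $w = 3.05$ (the paper's working threshold, corresponding to $n \approx \exp(\exp(3.05)) \approx 1.48\cdot 10^9$, just above the threshold of Lemma~\ref{lem202}) one gets
\begin{displaymath}
\Phi(3.05) \approx 0.87 \cdot 155.32 + \alpha(3.05) + \tfrac{2.85}{2}P_1(3.05) + 2.47\,\gamma(3.05) \approx 135.1 + 101.8 + 20.8 - 297.2 \approx -39.4 < 0.
\end{displaymath}
Calibrating $A_1$ to rescue this would require $A_1 \gtrsim 200$ rather than $155.32$; alternatively one would have to raise $w_0$ to about $3.4$, pushing $\exp(\exp(w_0))$ to roughly $10^{13}$, whence the finite tail (about $10^{13}$ integers, not $\sim 10^9$ as you estimate) becomes infeasible to verify.

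The paper avoids this by invoking Rosser's theorem $p_n > n\log n$, i.e.\ inequality \eqref{1.3}, which gives $z > y + w$ and hence $r > 1 + w/e^{w}$, a materially sharper lower bound than $r \geq 1$ at the critical small $w$. At $w = 3.05$ this is $r \geq 1 + 3.05/e^{3.05} \approx 1.144$, and inserting this in place of $r=1$ in your positive terms lifts $\Phi(3.05)$ from about $-39.4$ to about $+17.8$. Concretely, the paper first absorbs the $\alpha(w)/(y^4z)$ contribution into the $y^3z^2$ denominator using $z \geq y$ and $\alpha(w) \geq 0$, which reduces matters to \eqref{2.4}; it then multiplies by $6y^3z^3$ and bounds the resulting term that is linear in $z$ from below by replacing $z$ with $e^w + w$ --- this is exactly where Rosser's theorem enters, as the factor $(e^x+x)$ in the auxiliary function $g(x)$. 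The missing ingredient in your proposal is thus precisely the use of $p_n > n\log n$ rather than $p_n \geq n$; with only the weaker bound, independent term-by-term minimization loses too much near the awkward small-$w$ end of the range and the stated value $A_1 = 155.32$ cannot be achieved.
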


\begin{proof}
First, let $n \geq \exp(\exp(3.05))$.
Since $f(x) = 6x^4 - 34.1x^3 + 163.65x^2 - 198.3x + 141.65 \geq 0$ for every $x \geq 0$,
it suffices to show that
\begin{equation}
\frac{155.32}{z^5} + \frac{6w^4 - 34.1w^3 + 268.2w^2 - 752.7w + 263.3}{6y^3z^2} + \frac{13.15w^2 - 83.85w + 13.15}{y^2 z^3} \geq 0. \tag{2.4} \label{2.4}
\end{equation}
In order to do this, we set
\begin{align*}
g(x) & = (6x^4 - 34.1x^3 + 268.2x^2 - 752.7x + 263.3)(e^x+x) \\
& \p{\q\q} + 6e^x(13.15x^2-83.85x + 13.15 + 155.32 \cdot 0.87^2).
\end{align*}
It is easy to see that $h_1(x) = 6x^4 - 10.1x^3 + 244.8x^2 - 561.6x - 208.229752 \geq 0$ for every $x \geq 2.6$ and that $h_2(x) = 30x^4 - 136.4x^3 + 804.6x^2 
- 1505.4x + 263.3 \geq 0$ for every $x \geq 2.2$. Hence $g'(x) = h_1(x)e^x + h_2(x) \geq 0$ for every $x \geq 2.6$. We also have $g(3.05) \geq 0.9$. Therefore, 
$g(x) \geq 0$ for every $x \geq 3.05$. Since $6x^4 - 34.1x^3 + 268.2x^2 - 752.7x + 263.3 \geq 0$ for every $x \geq 3.05$, we can use \eqref{1.3} to get 
$g(w)/(6y^3z^3) \geq 0$.
Now we apply Lemma \ref{lem202} to obtain \eqref{2.4}.
We finish by direct computation.
\end{proof}

Finally, we give a proof of Theorem \ref{thm101}.

\begin{proof}[Proof of Theorem \ref{thm101}]
For convenience, we write $w= \log \log n$ and $y=\log n$. Setting $A_0 = 0.87$ and $A_1 = 155.32$, we use Lemma \ref{lem202} and Lemma \ref{lem203} to get 
$N_0 =1\,338\,564\,587$ and $N_1 = 100\,720\,878$, respectively. The proof of this theorem goes in two steps.

\textit{Step 1}. We set $a_0(n) = - w^2+6w$. Then $N_2 = 688\,383$ is a suitable choice for $N_2$. By \eqref{2.11}, we get
\begin{equation}
b_0(n) \geq 10.7 + g(n), \tag{2.23} \label{2.23}
\end{equation}
where
\begin{align*}
g(n) & = - \frac{2w^3-18w^2+64.2w-98.9}{3y} + \frac{w^4 - 12w^3 + 63.16w^2 - 203.17w + 258.29}{2y^2} \\
& \p{\q\q} - \frac{2w^5 - 10w^4 + 30w^3 - 70w^2 + 90w - 1554.24}{5y^3} - \frac{8w^3 - 2137.44w^2 + 2185.45w - 37836.25}{12y^4}.
\end{align*}
We define
\begin{align*}
g_1(x,t) & = 3.54e^{4x} + 20(18x^2 + 98.9)e^{3x} -20(2t^3 + 64.2t)e^{3t} + 30(x^4 + 63.16x^2 + 258.29)e^{2x} \\
& \p{\q\q} - 30(12t^3 + 203.17t)e^{2t} + 12(10x^4 + 70x^2 + 1554.24)e^x - 12(2t^5 + 30t^3 + 90t)e^t \\
& \p{\q\q} + 5(2137.44x^2 + 37836.25) - 5(8t^3 + 2185.45t).
\end{align*}
If $t_0 \leq x \leq t_1$, then $g_1(x,x) \geq g_1(t_0,t_1)$. We check with a computer that $g_1(i \cdot 10^{-5}, (i+1) \cdot 10^{-5}) \geq 0$ for every 
integer $i$ with $0 \leq i \leq 699\,999$. Therefore,
\begin{equation}
g(n) + 0.059 = \frac{g_1(w,w)}{60y^4} \geq 0 \q\q (0 \leq w \leq 7). \tag{2.24} \label{2.24}
\end{equation}
Next we prove that $g_1(x,x) \geq 0$ for every $x \geq 7$. For this purpose, let $W_1(x) = 3.54e^x -20(2x^3 - 18x^2 + 64.2x - 98.9)$. It is easy to show that 
$W_1(x) \geq 792$ for every $x \geq 7$. Hence we get
\begin{align*}
g_1(x,x) & \geq (792e^x + 30(x^4 - 12x^3 + 63.16x^2 - 203.17x + 258.29))e^{2x} \\
& \p{\q\q} - 12(2x^5 - 10x^4 + 30x^3 - 70x^2 + 90x - 1554.24)e^x \\
& \p{\q\q} - 5(8x^3 - 2137.44x^2 + 2185.45x - 37836.25).
\end{align*}
Since $792e^t + 30(t^4 - 12t^3 + 63.16t^2 - 203.17t + 258.29) \geq 875\,011$ for every $t \geq 7$, we
obtain $g(n) + 0.059 = g_1(w,w)/(60y^4) \geq 0$ for $w \geq 7$. Combined with \eqref{2.24}, it gives that $g(n) \geq -0.059$ for every $n \geq 3$. 
Applying this to \eqref{2.23}, we get $b_0(n) \geq 10.641$ for every $n \geq 3$. Hence, by Proposition \ref{prop207}, we get
\begin{displaymath}
p_n < n \left( y + w - 1 + \frac{w - 2}{y} - \frac{w^2-6w + 10.641}{2y^2} \right)
\end{displaymath}
for every $n \geq 1\,338\,564\,587$. For every integer $n$ such that $39\,529\,802 \leq n \leq 1\,338\,564\,586$ we check the last inequality 
with a computer.

\textit{Step 2}. We set $a_0(n) = 10.641$. Using the first step, we can choose $N_2 = 39\,529\,802$. By \eqref{2.11}, we have
\begin{equation}
b_0(n) \geq 10.7 + h(n), \tag{2.25} \label{2.25}
\end{equation}
where $h(n)$ is given by
\begin{align*}
h(n) & = - \frac{2w^3-21w^2+82.2w-130.823}{3y} + \frac{w^4 - 14w^3 + 77.16w^2 - 236.45w + 279.57}{2y^2} \\
& \p{\q\q} - \frac{2w^5-10w^4+35w^3-110w^2+203.205w-1660.65}{5y^3} \\
& \p{\q\q} + \frac{3w^4 - 44w^3 + 2309.28w^2 - 2568.52w + 38175.947}{12y^4}.
\end{align*}
We set
\begin{align*}
h_1(x,t) & = 1.98e^{4x} + 20(21x^2+130.823)e^{3x} - 20(2t^3+82.2t)e^{3t} + 30(x^4 + 77.16x^2 + 279.57)e^{2x} \\
& \p{\q\q} - 30(14t^3 + 236.45t)e^{2t} + 12(10x^4+110x^2+1660.65)e^x - 12(2t^5+35t^3+203.205t)e^t \\
& \p{\q\q} + 5(3x^4 + 2309.28x^2 + 38175.947) - 5(44t^3 + 2568.52t).
\end{align*}
Clearly, $h_1(x,x) \geq h_1(t_0,t_1)$ for every $x$ such that $t_0 \leq x \leq t_1$. We use a computer to verify that $h_1(i \cdot 10^{-6}, (i+1) \cdot 
10^{-6}) 
\geq 0$ for every integer $i$ with $0 \leq i \leq 7\,999\,999$. Therefore,
\begin{equation}
h(n) + 0.033 = \frac{h_1(w,w)}{60y^4} \geq 0 \q\q (0 \leq w \leq 8). \tag{2.26} \label{2.26}
\end{equation}
We next show that $h_1(x,x) \geq 0$ for every $x \geq 8$. Since $1.98e^t -20(2t^3 - 21t^2 + 82.2t - 130.823) \geq 1766$ for every $t \geq 8$, we have
\begin{align*}
h_1(x,x) & \geq 1766e^{3x} + 30(x^4 - 14x^3 + 77.16x^2 - 236.45x + 279.57)e^{2x} \\
& \p{\q\q} - 12(2x^5 - 10x^4 + 35x^3 - 110x^2 + 203.205x - 1660.65)e^x \\
& \p{\q\q} + 5(3x^4 - 44x^3 + 2309.28x^2 - 2568.52x + 38175.947).
\end{align*}
Note that $1766e^t + 30(t^4 - 14t^3 + 77.16t^2 - 236.45t + 279.57) \geq 5\,271\,998$ for every $t \geq 8$. Hence $h(n) + 0.033 = h_1(w,w)/(60y^4) \geq 0$ for 
$w \geq 8$.
Combined with \eqref{2.26} and \eqref{2.25}, this gives $b_0(n) \geq 10.667$ for every $n \geq 3$. Applying this to Proposition \ref{prop501}, we complete 
the proof of the required inequality for every $n \geq 1\,338\,564\,587$. We verify the remaining cases with a computer.
\end{proof}

Under the assumption that the Riemann hypothesis is true, Dusart \cite[Theorem 3.4]{dusart2018} found that
\begin{equation}
p_n < n \left( \log n + \log \log n - 1 + \frac{\log \log n - 2}{\log n} - \frac{(\log \log n)^2-6\log \log n}{2 \log^2 n} \right). \tag{1.12} 
\label{1.12}
\end{equation}
for every integer $n \geq 3468$. Using Theorem \ref{thm101} and a computer for smaller values of $n$, we get

\begin{kor}
The inequality \eqref{1.12} holds unconditionally for every $n \geq 3468$.
\end{kor}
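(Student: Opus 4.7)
The plan is to observe that the inequality stated in Theorem \ref{thm101} is strictly sharper than the inequality \eqref{1.12}, and then to handle the remaining small cases by direct computation. Concretely, for every $n \geq 3$ one has
\begin{displaymath}
\frac{(\log \log n)^2-6\log \log n + 10.667}{2 \log^2 n} > \frac{(\log \log n)^2-6\log \log n}{2 \log^2 n},
\end{displaymath}
since the two expressions differ by $10.667/(2\log^2 n) > 0$. Therefore the upper bound supplied by Theorem \ref{thm101} is strictly smaller than the right-hand side of \eqref{1.12}, and the desired inequality follows immediately for every integer $n \geq 46\,254\,381$.

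It then remains to verify \eqref{1.12} in the finite range $3468 \leq n \leq 46\,254\,380$. This is a routine computer check: one tabulates $p_n$ for $n$ in this range and compares it against the explicit elementary function on the right. The only mild care needed is numerical: since the quantity
\begin{displaymath}
n \left( \log n + \log \log n - 1 + \frac{\log \log n - 2}{\log n} - \frac{(\log \log n)^2-6\log \log n}{2 \log^2 n} \right) - p_n
\end{displaymath}
grows without bound but its lower-order terms are quite delicate for moderate $n$, the verification should be done with sufficient floating-point precision (or symbolically) to ensure positivity at each $n$ in the range.

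Combining the two ranges yields the inequality \eqref{1.12} for all $n \geq 3468$, which is the unconditional conclusion of the corollary. The main, and really the only, obstacle is the feasibility and rigor of the computer check; once the implication from Theorem \ref{thm101} is noticed, the rest is mechanical.
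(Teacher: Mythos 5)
Your proposal is correct and matches the paper's approach exactly: the paper derives the corollary by noting that Theorem \ref{thm101} gives a strictly stronger bound (since $10.667/(2\log^2 n) > 0$) for $n \geq 46\,254\,381$ and then checking the remaining range $3468 \leq n < 46\,254\,381$ by computer.
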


\section{Proof of Theorem \ref{thm102}}

We first introduce the polynomials $P_8, P_9, P_{10}, P_{11}, P_{12} \in \Q[x]$:
\begin{itemize}
\item $P_8(x) = 3x^2-6x+5.2$,
\item $P_9(x) = x^3-6x^2+11.4x-4.2$,
\item $P_{10}(x) = 2x^3-7.2x^2+8.4x-4.41$,
\item $P_{11}(x) = x^3-4.2x^2+4.41x$,
\item $P_{12}(x) = 9.3x^2-12.3x+11.5$.
\end{itemize}
Further, let the polynomials $Q_7, Q_8, Q_9$ with rational coefficients are given by
\begin{itemize}
\item $Q_7(x) = (x^2-x+1)P_{12}(x) + (x^2-x+1)P_8(x) - 3.15P_9(x) - P_{10}(x) + 12.85P_8(x)$,
\item $Q_8(x) = 3.15P_{10}(x) + 12.85P_9(x)$,
\item $Q_9(x) = 2(x^2-x+1)P_9(x) - P_8(x)P_{12}(x)$.
\end{itemize}
Let $B_1, \ldots, B_{10}$ be real positive constants satisfying
\begin{equation} 
B_6 + B_7 + B_8 + B_9 + B_{10} \leq 3.15. \tag{3.2} \label{3.2}
\end{equation}
Writing $w = \log \log n$, $y = \log n$, and $z = \log p_n$, we define $H_i : \N_{\geq 2} \rightarrow \R$, where $1 \leq i \leq 10$, by
\begin{itemize}
\item $\displaystyle H_1(n) = \frac{B_1w}{y^3z} - \frac{Q_7(w)}{2y^5z} + \frac{Q_8(w)}{2y^5z^2} + \frac{Q_9(w)}{4y^6z} + \frac{12.85P_9(w)}{2y^4z^3}$,
\item $\displaystyle H_2(n) = \frac{B_2w}{y^3z} + \frac{12.85w}{y^2z^2} - \frac{71.3}{z^4}$,
\item $\displaystyle H_3(n) = \frac{B_3w}{y^3z} - \frac{3.15P_8(w)}{2y^3z^2} - \frac{12.85(w^2 - w+1)}{y^3z^2}$,
\item $\displaystyle H_4(n) = \frac{B_4w}{y^3z} + \frac{3.15P_9(w) - 12.85P_8(w)}{2y^4z^2}$,
\item $\displaystyle H_5(n) = \frac{B_5w}{y^3z} + \frac{P_9(w) - 3.15P_8(w)}{2y^4z} - \frac{12.85(w^2 - w +1)}{y^4z} - \frac{(w^2-w+1)^2}{y^4z}$,
\item $\displaystyle H_6(n) = \frac{B_6w}{y^2z} + \frac{(12.85-B_1-B_2-B_3-B_4-B_5)w}{y^3z} - \frac{3.15(w^2 - w +1)}{y^2z^2}$,
\item $\displaystyle H_7(n) = \frac{B_7w}{y^2z} - \frac{12.85 P_8(w)}{2y^3z^3}$,
\item $\displaystyle H_8(n) = \frac{B_8w}{y^2z} - \frac{12.85(w^2 - w +1)}{y^2z^3}$,
\item $\displaystyle H_9(n) = \frac{B_9w}{y^2z} - \frac{463.2275}{z^5}$,
\item $\displaystyle H_{10}(n) = \frac{B_{10}w}{y^2z} - \frac{4585}{z^6}$.
\end{itemize}
Since $H_i(n)$, $1 \leq i \leq 10$, is nonnegative for all sufficiently large values of $n$, we can define
\begin{displaymath}
M_i = M_i(B_i) = \min \{ k \in \N \mid H_i(n) \geq 0 \text{ for every } n \geq k \}
\end{displaymath}
and set $K_1 = \max_{1 \leq i \leq 10} M_i$. Let $a_1 : \N_{\geq 2} \rightarrow \R$ be an arithmetic function and let $K_2$ be a positive integer, which 
depends on $a_1$, so that the inequalities
\begin{equation}
a_1(n) > - (\log \log n)^2 + 6\log \log n, \tag{3.3} \label{3.3}
\end{equation}
\begin{equation}
0 \leq \frac{\log \log n-1}{\log n} + \frac{\log \log n - 2}{\log^2 n} - \frac{(\log \log n)^2 - 6\log \log n+a_1(n)}{2\log^3 n} \leq 1, \q \text{and}  
\tag{3.4} \label{3.4}
\end{equation}
\begin{equation}
p_n > n \left( \log n + \log \log n - 1 + \frac{\log \log n - 2}{\log n} - \frac{(\log \log n)^2-6\log \log n+a_1(n)}{2\log^2 n} \right) \tag{3.5} \label{3.5}
\end{equation}
hold simultaneously for every $n \geq K_2$. Furthermore, we define the function $G_1 : \R \rightarrow \R$ by
\begin{align*}
G_1(x) & = \frac{2x^3 - 15x^2 + 42x - 14}{6e^{3x}} + \frac{3.15x}{e^{3x}} - \frac{12.85}{e^{3x}} - \frac{x^2 - x + 1}{e^{3x}} + \frac{(x^2-x+1)x}{e^{4x}} - 
\frac{P_{12}(x)}{2e^{4x}} + \frac{12.85x}{e^{4x}} \\
& \p{\q\q} + \frac{P_{12}(x)x}{2e^{5x}} + \frac{(x-1)^2}{2e^{2x}} - \frac{x^3-6x^2+12x-7}{3e^{3x}} - \sum_{k=2}^4 \frac{(-1)^k}{k} \left( \frac{x-1}{e^x} + 
\frac{x-2}{e^{2x}} \right)^k  + \frac{(x-2)^4}{4e^{8x}}.
\end{align*}
In order to prove Theorem \ref{thm102}, we set
\begin{equation}
b_1(n) = 11.3 - 2\,G_1(\log \log n) \log^2 n + \frac{a_1(n)}{\log n} - \frac{2A_0(3.15-(B_6 + B_7 + B_8 + B_9 + B_{10}))\log \log n}{\log n} \tag{3.6} 
\label{3.6}
\end{equation}
and prove the following proposition.

\begin{prop} \label{prop306}
For every integer $n \geq \max \{N_0, K_1, K_2, 3520\}$,  we have
\begin{displaymath}
p_n > n \left( \log n + \log \log n - 1 + \frac{\log \log n - 2}{\log n} - \frac{(\log \log n)^2-6\log \log n + b_1(n)}{2 \log^2 n} \right).
\end{displaymath}
\end{prop}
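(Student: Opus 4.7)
The plan is to mirror the proof of Proposition~\ref{prop207} with every inequality direction reversed. The starting point should be an explicit \emph{lower} bound for $p_n$ of the shape
\[
p_n > n\left(\log p_n - 1 - \frac{1}{\log p_n} - \frac{2.85}{\log^2 p_n} - \frac{13.15}{\log^3 p_n} - \frac{71.3}{\log^4 p_n} - \frac{463.2275}{\log^5 p_n} - \frac{4585}{\log^6 p_n}\right),
\]
analogous to the upper bound \eqref{2.12} used earlier and presumably available from \cite{ca2017}; the numerical constants $71.3$, $463.2275$, $4585$ appearing in $H_2$, $H_9$, $H_{10}$ strongly indicate that this is the input estimate. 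Writing $w,y,z$ as in the introductory notation, I would then invoke Proposition~\ref{prop303} (and its Corollaries \ref{kor304}, \ref{kor305}) to replace $1/z$, $1/z^2$, and higher powers by their upper-bound expansions in $1/y$ and $w/y^k$, iterating exactly as in the chain \eqref{2.13}--\eqref{2.16} of the earlier proof.

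Each such substitution introduces a bundle of cross terms of the form $w/(y^j z^k)$, $P_i(w)/(y^j z^k)$, and $(w^2-w+1)^2/(y^4 z)$; these are precisely the quantities that appear on the right-hand sides of $H_1,\dots,H_{10}$. Using the constraint \eqref{3.2}, the common carriers $w/(y^2 z)$ and $w/(y^3 z)$ are distributed across the ten auxiliary functions with weights $B_1,\dots,B_{10}$, so that the hypothesis $n\geq K_1$ lets every $H_i(n)$ be discarded as non-negative, while the polynomials $Q_7,Q_8,Q_9$ collect the leftover combinations of type $(w^2-w+1)P_j(w)/(y^k z^j)$ produced by the $12.85$- and $3.15$-coefficient substitutions. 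After these absorptions the target inequality reduces to one of the form
\[
y + w - 1 + \frac{w-2}{y} - \frac{w^2 - 6w + b_1(n)}{2y^2} + R(n) > \log p_n,
\]
where the remainder $R(n)$ exactly matches the Taylor polynomial of $\log(1+t)$ with $t = (w-1)/y + (w-2)/y^2 - (w^2 - 6w + a_1(n))/(2y^3)$.

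At this point \eqref{3.4} guarantees $t\in[0,1]$, so the truncation $\log(1+t) \geq t - t^2/2 + t^3/3 - t^4/4$ is available; this is the only direction-sensitive step and it dictates the sign pattern of $G_1(x)$, in particular the $(x-2)^4/(4e^{8x})$ tail, which arises from discarding the positive $t^5/5$ remainder. Applying this lower bound, invoking \eqref{3.5} on the resulting $\log(1+t)$, and using the conversion $A_0/y \leq 1/z$ from $n\geq N_0$ wherever one needs to trade a factor of $1/y$ for $1/z$, then yields the desired lower bound for $p_n/n$; condition \eqref{3.3} ensures $a_1(n)$ stays in the range where $b_1(n)$ is controlled. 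The main obstacle is pure bookkeeping: one must verify algebraically that the coefficients hidden in $G_1(w)$, $Q_7(w)$, $Q_8(w)$, $Q_9(w)$ and the weights $B_i$ align so that every cross term is absorbed by some $H_i$, while the surviving bulk terms reproduce exactly the $-2\,G_1(w)\log^2 n$ and $a_1(n)/\log n$ contributions to $b_1(n)$ in \eqref{3.6}. I would organize this verification by collecting terms according to total powers of $1/y$ and $1/z$ separately, in parallel with the analogous grouping carried out around \eqref{2.21} in the proof of Proposition~\ref{prop207}.
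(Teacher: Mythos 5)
Your overall plan matches the paper's own proof: start from the explicit lower bound of \cite[Theorem 3.2]{ca2017}, iterate Proposition~\ref{prop303} and Corollaries~\ref{kor304}, \ref{kor305} to produce the $\Psi_1, \Psi_2, \Psi_3$ analogues of \eqref{2.13}--\eqref{2.16}, absorb the resulting cross terms with $\sum_{i=1}^{10} H_i(n) \geq 0$ and the trade $A_0/y \leq 1/z$, and close with $\log(1+t) \geq t - t^2/2 + t^3/3 - t^4/4$ together with the monotonicity observations and \eqref{3.3}--\eqref{3.5}. That is precisely the chain carried out in the paper, so the method is sound.

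Two details are off, though. First, the input estimate \eqref{3.7} has constants $3.15$ and $12.85$ in the $\log^{-2}p_n$ and $\log^{-3}p_n$ positions, not the $2.85$ and $13.15$ that appear in the upper bound \eqref{2.12}; you can actually read the correct pair off $H_3$--$H_8$, where $3.15 P_8(w)$ and $12.85(w^2-w+1)$ occur repeatedly, just as you read $71.3$, $463.2275$, $4585$ off $H_2$, $H_9$, $H_{10}$. With the wrong constants every coefficient comparison downstream would fail, so this must be corrected before the bookkeeping can be made to close. Second, the $(x-2)^4/(4e^{8x})$ tail in $G_1$ does not come from discarding $t^5/5$; it is a bookkeeping correction for the $(w-2)^4/(4y^8)$ term produced by the $k=4$ binomial in $\sum_{k=2}^4 \frac{(-1)^k}{k}\bigl(\frac{x-1}{e^x}+\frac{x-2}{e^{2x}}\bigr)^k$, and it is ultimately disposed of via Lemma~\ref{lem302} (together with Lemma~\ref{lem301}, which you should cite explicitly in the absorption step). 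Once these are fixed, the rest is indeed the algebraic verification you describe, organized by collecting powers of $1/y$ and $1/z$.
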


The following two lemmata are helpful for the proof of Proposition \ref{prop306}. The proofs are left to the reader.

\begin{lem} \label{lem301}
For every integer $n \geq 6$, we have
\begin{displaymath}
\frac{12.85P_9(\log \log n)}{2\log^6 n \log p_n} + \frac{3.15P_{10}(\log \log n)}{2\log^6 n \log p_n} + \frac{P_{11}(\log \log n)}{2\log^6 n \log p_n} \geq 0.
\end{displaymath}
\end{lem}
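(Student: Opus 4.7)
The plan is to reduce the claim to a one-variable polynomial inequality in $w = \log\log n$. For every $n \geq 6$ we have $p_n \geq p_6 = 13 > 1$, so $\log p_n > 0$, and of course $\log n \geq \log 6 > 0$. Hence the common denominator $2\log^6 n \cdot \log p_n$ is strictly positive, and clearing it shows that the displayed inequality is equivalent to
$$
Q(w) \;:=\; 12.85\,P_9(w) + 3.15\,P_{10}(w) + P_{11}(w) \;\geq\; 0
\qquad \text{for all } w \geq \log\log 6.
$$

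Substituting the explicit definitions of $P_9, P_{10}, P_{11}$ given just before the statement of Theorem \ref{thm102} and collecting like terms is a routine expansion and yields the cubic
$$
Q(w) = 20.15\,w^3 - 103.98\,w^2 + 177.36\,w - 67.8615.
$$
The problem is now to show that this explicit cubic is nonnegative on the half-line $[\log\log 6, \infty)$.

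The remaining step is a short calculus analysis of $Q$. Its derivative $Q'(w) = 60.45\,w^2 - 207.96\,w + 177.36$ has discriminant approximately $354.7$, so by the quadratic formula it has two real roots $w_- < w_+$ with $w_- \approx 1.564$ and $w_+ \approx 1.876$, both lying well above $\log\log 6 \approx 0.5831$. Consequently $Q$ is increasing on $[\log\log 6, w_-]$, decreasing on $[w_-, w_+]$, and increasing on $[w_+, \infty)$. Hence its infimum on $[\log\log 6, \infty)$ is attained either at the left endpoint or at the local minimum $w_+$, and direct evaluation gives $Q(\log\log 6) \approx 4.20$ and $Q(w_+) \approx 31.93$, both strictly positive.

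The only delicate point is making the left-endpoint check rigorous, since there $Q$ is only about $4.2$ and involves the transcendental number $\log\log 6$. To avoid any floating-point ambiguity I would choose a rational lower bound $r < \log\log 6$ (for instance $r = 0.58$, since $\log 6 > 1.79$ and $\log 1.79 > 0.58$), verify by exact rational arithmetic that $Q(r) > 0$, and invoke monotonicity of $Q$ on $[r, w_-]$ (itself verified from the sign of $Q'$) to conclude $Q(w) > 0$ throughout $[\log\log 6, w_-]$. Together with the positive value $Q(w_+) > 0$ at the local minimum and the fact that $Q$ is eventually increasing, this completes the proof.
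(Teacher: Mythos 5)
Your proof is correct, and since the paper explicitly leaves this lemma ``to the reader,'' there is no proof in the text to compare it against; your reduction (clear the positive common denominator $2\log^6 n\,\log p_n$, collect $12.85P_9 + 3.15P_{10} + P_{11}$ into the cubic $Q(w) = 20.15w^3 - 103.98w^2 + 177.36w - 67.8615$, and analyze $Q$ on $[\log\log 6,\infty)$ by calculus) is the natural and essentially forced route. The expansion of $Q$ is accurate; one small slip: the discriminant of $Q'$ is $207.96^2 - 4\cdot 60.45\cdot 177.36 \approx 361.7$, not $354.7$, though your critical points $w_\pm \approx 1.56,\,1.88$ and the endpoint/local-minimum values $Q(\log\log 6)\approx 4.2$ and $Q(w_+)\approx 32$ are fine, so the conclusion is unaffected. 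Your care with the left endpoint (using the rational bound $0.58 < \log\log 6$ together with the sign of $Q'$ on $[0.58, w_-]$) is exactly the right way to make the check rigorous without floating-point handwaving.
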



\begin{lem} \label{lem302}
Let $w = \log \log n$. For every integer $n \geq 17$, we have
\begin{displaymath}
\frac{P_9(w)P_{12}(w)}{4\log^7n\log p_n} + \frac{12.85P_{10}(w)}{2\log^7n\log p_n} + \frac{3.15P_{11}(w)}{2\log^7n\log p_n} + 
\frac{3.15P_{11}(w)}{2\log^6n\log^2p_n} \geq \frac{(w - 2)^4}{4\log^8n}.
\end{displaymath}
\end{lem}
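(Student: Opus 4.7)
The plan is to clear denominators and reduce the claim to a single polynomial-type inequality in the three variables $w=\log\log n$, $y=\log n$, $z=\log p_n$. Multiplying through by $4\log^8 n \log^2 p_n>0$ transforms the statement into the equivalent inequality
\[
yz\bigl(P_9(w)P_{12}(w)+25.7\,P_{10}(w)+6.3\,P_{11}(w)\bigr)+6.3\,y^2\,P_{11}(w)\geq z^2(w-2)^4.
\]

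My first step would be to analyze the signs of the polynomials involved. The factorizations $P_{11}(x)=x(x-2.1)^2$ and $P_{10}(x)=2(x-2.1)(x^2-1.5x+1.05)$ are immediate; the quadratic $x^2-1.5x+1.05$ and $P_{12}(x)=9.3x^2-12.3x+11.5$ both have negative discriminant and are therefore strictly positive. Evaluating $P_9$ at its critical points $x=2\pm\sqrt{0.2}$ shows $P_9(x)>0$ for $x\geq 1$. Since $n\geq 17$ yields $w\geq\log\log 17>1$, one obtains $P_9(w)>0$ throughout the relevant range, so the only term on the left that can be negative is the one carrying $P_{10}(w)$, and only for $w<2.1$.

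Next I would split into two regimes according to the sign of $P_{10}(w)$. For $w\geq 2.1$, which corresponds to $n\geq\lceil e^{e^{2.1}}\rceil\approx 3523$, every polynomial on the left is nonnegative, so it suffices to dominate the right-hand side by the leading contribution $yz\,P_9(w)P_{12}(w)$. Using Lemma~\ref{lem202} to bound $z\leq y/0.87$, together with the explicit lower bound $P_9(w)P_{12}(w)\geq c\,w^5$ valid for some $c>0$ and all $w\geq 2.1$, one reduces the desired estimate $y\,P_9(w)P_{12}(w)\geq z(w-2)^4$ to a single-variable polynomial inequality in $w$, which is routine.

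For the remaining finite range $17\leq n<3523$ I would simply verify the inequality by direct numerical evaluation at each integer $n$. The main obstacle, and the reason a finite check is preferable here, is precisely the sign change of $P_{10}$ at $w=2.1$: near that transition the negative contribution $25.7\,yz\,P_{10}(w)$ almost cancels the positive term $yz\,P_9(w)P_{12}(w)$, and the auxiliary $6.3\,y^2 P_{11}(w)$ and $6.3\,yz P_{11}(w)$ contributions are just large enough to tip the balance; any clean symbolic argument in this intermediate zone would be delicate, whereas the machine verification is immediate and explains the stated threshold $n\geq 17$.
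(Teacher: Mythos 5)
Your overall plan is sound: clearing denominators to get
\begin{displaymath}
yz\bigl(P_9(w)P_{12}(w)+25.7\,P_{10}(w)+6.3\,P_{11}(w)\bigr)+6.3\,y^2\,P_{11}(w)\geq z^2(w-2)^4
\end{displaymath}
is the right reduction, the factorizations $P_{11}(x)=x(x-2.1)^2$ and $P_{10}(x)=2(x-2.1)(x^2-1.5x+1.05)$ are correct, and splitting at $w=2.1$ (where $P_{10}$ changes sign) with a finite computer check below that threshold is a reasonable strategy. The paper leaves this proof to the reader, so there is no text to compare against, but your route is plausible.

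There is, however, one genuine gap. In the regime $w\geq 2.1$ you bound $z\leq y/0.87$ by citing Lemma~\ref{lem202}, but that lemma is only stated (and only proved) for $n\geq 1\,338\,564\,587$, whereas you need it starting from $n\approx 3523$. As written, your argument therefore leaves the entire range $3523\leq n<1\,338\,564\,587$ uncovered, and a brute-force computer check over $10^9$ integers is not what you intended. The fix is cheap: once the inequality has been reduced (after dropping the nonnegative $P_{10}$- and $P_{11}$-terms for $w\geq 2.1$ and dividing by $z$) to $y\,P_9(w)P_{12}(w)\geq z\,(w-2)^4$, you only need an upper bound on $z/y$, and any crude one will do. For instance $p_n<n^2$ for $n\geq 2$ gives $z<2y$, reducing the task to the single-variable inequality $P_9(w)P_{12}(w)\geq 2(w-2)^4$ for $w\geq 2.1$, which is clear: the left side is degree five with positive leading coefficient, is already about $68$ at $w=2.1$ versus $2\cdot 10^{-4}$ on the right, and increases faster than the right side. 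Replacing the appeal to Lemma~\ref{lem202} by this elementary bound closes the gap; the rest of your sketch (sign analysis of the polynomials, the finite check for $17\leq n\leq 3523$, and the explanation of why the threshold $17$ is tight) is correct.
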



Now we give a proof of Proposition \ref{prop306}.

\begin{proof}[Proof of Proposition \ref{prop306}]
Let $n \geq \max \{N_0, K_1, K_2, 3520\}$. By \cite[Theorem 3.2]{ca2017}, we have 
\begin{equation}
p_n > n \left( \log p_n - 1 - \frac{1}{\log p_n} - \frac{3.15}{\log^2p_n} - \frac{12.85}{\log^3p_n} - \frac{71.3}{\log^4p_n} - \frac{463.2275}{\log^5p_n} - 
\frac{4585}{\log^6p_n} \right). \tag{3.7} \label{3.7}
\end{equation}
For convenience, we write $w = \log \log n$, $y = \log n$, and $z = \log p_n$. By Corollary \ref{kor305}, we have
\begin{equation}
- \frac{1}{z} \geq \Psi_1(n) = - \frac{1}{y} + \frac{w}{y^2} - \frac{w^2-w+1}{y^2z} - \frac{P_8(w)}{2y^3z} + \frac{P_9(w)}{2y^4z}. \tag{3.8} \label{3.8}
\end{equation}
Similarly to the proof of \eqref{2.15}, we use Proposition \ref{prop303} to get
\begin{equation}
- \frac{1}{z^2} \geq \Psi_2(n) = - \frac{1}{y^2} + \frac{w}{y^3} + \frac{w}{y^2z} - \left( \frac{1}{y} + \frac{1}{z} \right) \left(\frac{w^2 - w + 1}{y^2z} + 
\frac{P_8(w)}{2y^3z} - \frac{1}{2z} \sum_{k=4}^6\frac{P_{k+5}(w)}{y^k} \right). \tag{3.9} \label{3.9}
\end{equation}
Using $P_{11}(x) = x(x-2.1)^2 \geq 0$ for every $x \geq 0$, $P_{10}(x) = 2(x-2.1)(x^2-1.5x+1.05) \geq 0$ for every $x \geq 2.1$ and Corollary 
\ref{kor304}, we get
\begin{align}
- \frac{1}{z^3} \geq \Psi_3(n) & = - \frac{1}{y^3} + \frac{w}{y^4} + \frac{w}{y^3z} + \frac{w}{y^2z^2} - \frac{w^2 - w + 1}{y^4z} - \frac{w^2 - w + 1}{y^3z^2} - 
\frac{w^2 - w + 1}{y^2z^3} \tag{3.10} \label{3.10} \\
& \p{\q\q} - \frac{P_8(w)}{2y^5z} - \frac{P_8(w)}{2y^4z^2} - \frac{P_8(w)}{2y^3z^3} + \frac{P_9(w)}{2y^6z} + \frac{P_9(w)}{2y^5z^2} + \frac{P_9(w)}{2y^4z^3} + 
\frac{P_{10}(w)}{2y^7z}. \nonumber
\end{align}
By \eqref{3.2}, $3.15 - (B_6 + B_7 + B_8 + B_9 + B_{10}) \geq 0$. Since $n \geq N_0$ is assumed, we have $F_0(n) \geq 0$. Hence, by \eqref{2.17} and 
\eqref{3.6}, we see that
\begin{equation}
\frac{11.3 - b_1(n)}{2y^2} \leq G_1(w) - \frac{a_1(n)}{2y^3} + \frac{(3.15 - (B_6 + B_7 + B_8 + B_9 + B_{10}))w}{y^2z}. \tag{3.11} \label{3.11}
\end{equation}
We have $n \geq K_1$. This means that $\sum_{i=1}^{10} H_i(n) \geq 0$. So we can add $\sum_{i=1}^{10} H_i(n)$ to the right-hand side of \eqref{3.11} and use 
Lemmata \ref{lem301} and \ref{lem302} to get
\begin{align*}
\frac{d(n)}{2y^2} & \leq G_1(w) - \frac{a_1(n)}{2y^3} + \frac{3.15w}{y^2z} + \frac{12.85P_9(w)}{2y^6z} + \frac{3.15P_{10}(w)}{2y^6z} + \frac{P_{11}(w)}{2y^6z} 
- \frac{Q_7(w)}{2y^5z} + \frac{Q_8(w)}{2y^5z^2} + \frac{Q_9(w)}{4y^6z} \\
& \p{\q\q} +  \frac{12.85P_9(w)}{2y^4z^3} + \frac{12.85w}{y^2z^2} - \frac{71.3}{z^4} - \frac{3.15P_8(w)}{2y^3z^2} - \frac{12.85(w^2-w+1)}{y^3z^2} + 
\frac{3.15P_9(w)}{2y^4z^2} \\
& \p{\q\q} - \frac{12.85P_8(w)}{2y^4z^2} + \frac{P_9(w)}{2y^4z} - \frac{3.15P_8(w)}{2y^4z} - \frac{12.85(w^2-w+1)}{y^4z} - \frac{(w^2-w+1)^2}{y^4z} + 
\frac{12.85w}{y^3z} \\
& \p{\q\q} - \frac{3.15(w^2 - w +1)}{y^2z^2} - \frac{12.85 P_8(w)}{2y^3z^3} - \frac{12.85(w^2-w+1)}{y^2z^3} - \frac{463.2275}{z^5} - \frac{4585}{z^6} \\
& \p{\q\q} + \frac{P_9(w)P_{12}(w)}{4y^7z} + \frac{12.85P_{10}(w)}{2y^7z} + \frac{3.15P_{11}(w)}{2y^7z}+ \frac{3.15P_{11}(w)}{2y^6z^2} - \frac{(w - 2)^4}{4y^8},
\end{align*}
where $d(n) = 11.3 - b_1(n)$. Applying the defining formulas of $Q_7, Q_8, Q_9$, and $G_1$ to the last inequality, we find
\begin{align*}
\frac{d(n)}{2y^2} & \leq - \frac{a_1(n)}{2y^3} + \frac{2w^3 - 15w^2 + 42w - 14}{6y^3} + \frac{3.15w}{y^3} + 12.85 \cdot \Psi_3(n) + \left( \frac{w^2-w+1}{y^2} 
+ \frac{P_{12}(w)}{2y^3} \right) \cdot \Psi_1(n) \\
& \p{\q\q} + \frac{(w-1)^2}{2y^2} - \frac{w^3-6w^2+12w-7}{3y^3} - \sum_{k=2}^4 \frac{(-1)^k}{k} \left( \frac{w-1}{y} + \frac{w-2}{y^2} \right)^k + 
\frac{3.15w}{y^2z} + \frac{3.15P_{10}(w)}{2y^6z} \\
& \p{\q\q} + \frac{P_{11}(w)}{2y^6z} + \frac{3.15P_9(w)}{2y^5z} + \frac{P_{10}(w)}{2y^5z} + \frac{3.15P_{10}(w)}{2y^5z^2} - 
\frac{71.3}{z^4} - \frac{3.15P_8(w)}{2y^3z^2} + \frac{3.15P_9(w)}{2y^4z^2} + \frac{P_9(w)}{2y^4z} \\
& \p{\q\q} - \frac{3.15P_8(w)}{2y^4z} - \frac{3.15(w^2-w+1)}{y^2z^2} - \frac{463.2275}{z^5} - \frac{4585}{z^6} + \frac{3.15P_{11}(w)}{2y^7z}+ 
\frac{3.15P_{11}(w)}{2y^6z^2},
\end{align*}
where $\Psi_1(n)$ and $\Psi_3(n)$ are given as in beginning of the proof. Note that $w^2-w+1$ and $P_{12}(w)$ are nonnegative. Therefore, we can apply 
\eqref{3.8} and \eqref{3.10} to the last inequality and get
\begin{align*}
\frac{d(n)}{2y^2} & \leq \frac{2w^3 - 15w^2 + 42w - 14}{6y^3} + \frac{3.15w}{y^3} - \frac{12.85}{z^3} - \frac{w^2-w+1}{y^2z} - \frac{P_{12}(w)}{2y^3z} + 
\frac{(w-1)^2}{2y^2} \\
& \p{\q\q} - \frac{w^3-6w^2+12w-7}{3y^3} - \sum_{k=2}^4 \frac{(-1)^k}{k} \left( \frac{w-1}{y} + \frac{w-2}{y^2} \right)^k - \frac{a_1(n)}{2y^3} + 
\frac{3.15w}{y^2z} + \frac{3.15P_{10}(w)}{2y^6z} \\
& \p{\q\q} + \frac{P_{11}(w)}{2y^6z} + \frac{3.15P_9(w)}{2y^5z} + \frac{P_{10}(w)}{2y^5z} + \frac{3.15P_{10}(w)}{2y^5z^2} - \frac{71.3}{z^4} - 
\frac{3.15P_8(w)}{2y^3z^2} + \frac{3.15P_9(w)}{2y^4z^2} + \frac{P_9(w)}{2y^4z} \\
& \p{\q\q} - \frac{3.15P_8(w)}{2y^4z} - \frac{3.15(w^2-w+1)}{y^2z^2} - \frac{463.2275}{z^5} - \frac{4585}{z^6} + \frac{3.15P_{11}(w)}{2y^7z} + 
\frac{3.15P_{11}(w)}{2y^6z^2}.
\end{align*}
Since $P_{12}(x) = P_8(x) + 2 \cdot 3.15(x^2-x+1)$ and $d(n) = 11.3 - b_1(n)$, the last inequality is equivalent to
\begin{align*}
\frac{5-b_1(n)}{2y^2} & \leq 3.15 \cdot \Psi_2(n) + \frac{2w^3 - 15w^2 + 42w - 14}{6y^3} - \frac{12.85}{z^3} - \frac{w^2-w+1}{y^2z} - \frac{P_8(w)}{2y^3z} + 
\frac{(w-1)^2}{2y^2} \\
& \p{\q\q} - \frac{w^3-6w^2+12w-7}{3y^3} - \sum_{k=2}^4 \frac{(-1)^k}{k} \left( \frac{w-1}{y} + \frac{w-2}{y^2} \right)^k - \frac{a_1(n)}{2y^3} + 
\frac{P_9(w)}{2y^4z} + \frac{P_{10}(w)}{2y^5z}  \\
& \p{\q\q} + \frac{P_{11}(w)}{2y^6z} - \frac{71.3}{z^4} - \frac{463.2275}{z^5} - \frac{4585}{z^6}.
\end{align*}
Using \eqref{3.9} and Proposition \ref{prop303}, we get the inequality
\begin{align*}
\frac{5-b_1(n)}{2y^2} & \leq - \frac{1}{z} - \frac{3.15}{z^2} - \frac{12.85}{z^3} - \frac{71.3}{z^4} - \frac{463.2275}{z^5} -  \frac{4585}{z^6} + \frac{2w^3 - 
15w^2+42w-14}{6y^3} + \frac{1}{y} - \frac{w}{y^2} \\
& \p{\q\q} + \frac{(w-1)^2}{2y^2} - \frac{w^3-6w^2+12w-7}{3y^3} - \sum_{k=2}^4 \frac{(-1)^k}{k} \left( \frac{w-1}{y} + \frac{w-2}{y^2} \right)^k  - 
\frac{a_1(n)}{2y^3}
\end{align*}
which is equivalent to
\begin{align}
\frac{w-2}{y} & \leq \frac{w-1}{y} + \frac{w-2}{y^2} - \frac{w^2-6w + a_1(n)}{2y^3} - \sum_{k=2}^4 \frac{(-1)^k}{k} \left( \frac{w-1}{y} + \frac{w-2}{y^2} 
\right)^k \tag{3.12} \label{3.12} \\
& \p{\q\q} + \frac{w^2-6w+b_1(n)}{2y^2} - \frac{1}{z} - \frac{3.15}{z^2} - \frac{12.85}{z^3} - \frac{71.3}{z^4} - \frac{463.2275}{z^5} - \frac{4585}{z^6}. 
\nonumber
\end{align}
Since $\log(1+t) \geq \sum_{k=1}^4 (-1)^{k+1}t^k/k$ for every $t > -1$ and both $g_1(x) = -x^2/2 + x^3/3$ and $g_2(x) = -x^4/4$ are decreasing on the interval 
$[0,1]$, we can use \eqref{3.3} and \eqref{3.4} to see that the inequality \eqref{3.12} implies
\begin{align*}
\frac{w-2}{y} - \frac{w^2 - 6w + b_1(n)}{2y^2} & \leq \log \left(1 + \frac{w-1}{y} + \frac{w - 2}{y^2} - \frac{w^2-6w + a_1(n)}{2y^3} \right) - \frac{1}{z} - 
\frac{3.15}{z^2} - \frac{12.85}{z^3} \\
& \p{\q\q} - \frac{71.3}{z^4}- \frac{463.2275}{z^5} - \frac{4585}{z^6}.
\end{align*}
Now we add $y+w-1$ to both sides of the last inequality und use \eqref{3.5} to get
\begin{displaymath}
y + w - 1 + \frac{w - 2}{\log n} - \frac{w^2 - 6w + b_1(n)}{y} \leq z -1 - \frac{1}{z} - \frac{3.15}{z^2} - \frac{12.85}{z^3} - \frac{71.3}{z^4} - 
\frac{463.2275}{z^5} - \frac{4585}{z^6}.
\end{displaymath}
Finally, we multiply the last inequality by $n$ and apply \eqref{3.7} to complete the proof.
\end{proof}

Now, we give a proof of Theorem \ref{thm102}.


\begin{proof}[Proof of Theorem \ref{thm102}]
Let $A_0 = 0.87$. Then, by Lemma \ref{lem202}, $N_0 = 1\,338\,564\,587$. In the following table we give the explicit values of $M_i$ for given $B_i$:
\vspace{2mm}
\begin{center}
\begin{tabular}{|c||c|c|c|c|c|c|}
\hline
  $i$ &                1&                2&                3&                4&                5 \\ \hline
$B_i$ &             0.27&             4.23&            1.575&            0.058&             2.24 \\ \hline
$M_i$ & 1\,359\,056\,314& 1\,471\,247\,583& 1\,468\,111\,666& 1\,383\,728\,153& 1\,462\,324\,835 \\ \hline \hline
  $i$ &                6&                7&                8&                9&               10 \\ \hline
$B_i$ &            0.105&           0.0026&            0.052&           0.1955&             0.08 \\ \hline
$M_i$ &                5& 1\,075\,859\,481& 1\,445\,815\,789& 1\,479\,240\,488& 1\,447\,605\,594 \\ \hline
\end{tabular}
\end{center}
\vspace{2mm}
The proof that $H_i(n) \geq 0$ for every $n \geq M_i$ and each integer $i$ satisfying $1 \leq i \leq 10$ can be found in Section 6. The above table indicates
\begin{equation}
3.15 - (B_6 + B_7 + B_8 + B_9 + B_{10}) = 2.7149 \tag{3.13} \label{3.13}
\end{equation}
and $K_1 = \max_{1 \leq i \leq 10} M_i = 1\,479\,240\,488$.

\textit{Step 1}. We set $a_1(n) = 0.2 y - w^2 + 6w$. Then, by \eqref{1.11} and \eqref{3.3}--\eqref{3.5}, we can choose $K_2 = 33$. Using \eqref{3.6} and 
\eqref{3.13}, we obtain
\begin{align*}
b_1(n) & = 11.5 - \frac{2w^3-18w^2+63.071778w-97.1}{3y} + \frac{w^4-12w^3+46.6w^2-112w+40}{2y^2} \\
& \p{\q\q} + \frac{2w^4-21.3w^3+40.3w^2-41.5w+12}{y^3} + \frac{9w^4-56w^3+129w^2-132w+52}{3y^4} \\
& \p{\q\q} + \frac{2w^4-14w^3+36w^2-40w+16}{y^5}.
\end{align*}
In this step, we show that $b_1(n) \leq 11.589$ for every $n \geq \exp(\exp(3.05))$. For this purpose, we set
\begin{align*}
\alpha(x,t) & = 0.534e^{5x} + 2(2x^3 + 63.071778x)e^{4x} - 2(18t^2+97.1)e^{4t} + 3(12x^3 + 112x)e^{3x} \\
& \p{\q\q} - 3(t^4 + 46.6t^2 + 40)e^{3t} + 6(21.3x^3 + 41.5x)e^{2x} - 6(2t^4 + 40.3t^2 + 12)e^{2t} \\
& \p{\q\q} + 2(56x^3 + 132x)e^x  - 2(9t^4 + 129t^2 + 52)e^t + 6(14x^3 + 40x) - 6(2t^4 + 36t^2 + 16).
\end{align*}
Note that this function satisfies the identity 
\begin{equation}
\alpha(w,w) = 6(11.589 - b_1(n))y^5. \tag{3.14} \label{3.14}
\end{equation}
If $t_0 \leq x \leq t_1$, then $\alpha(x,x) \geq \alpha(t_0,t_1)$. We check with a computer that $\alpha(3.05 + i \cdot 10^{-5}, 3.05 + (i+1) \cdot 10^{-5}) 
\geq 0$ for every integer $i$ satisfying $0 \leq i \leq 394\,999$. Hence by \eqref{3.14}, 
\begin{equation}
b_1(n) \leq 11.589 \q\q (3.05 \leq w \leq 7). \tag{3.15} \label{3.15}
\end{equation}
Next, we show that $\alpha(x,x) \geq 0$ for every $x \geq 7$. Since $0.534e^x + 2(2x^3-18x^2+63.071778x-97.1) \geq 882$ for every $x \geq 7$, we have
\begin{align*}
\alpha(x,x) & \geq 882e^{4x} - 3(x^4-12x^3+46.6x^2-112x+40)e^{3x} - 6(2x^4-21.3x^3+40.3x^2-41.5x+12)e^{2x} \\
& \p{\q\q} - 2(9x^4-56x^3+129x^2-132x+52)e^x - 6(2x^4-14x^3+36x^2-40x+16).
\end{align*}
Note that $882e^{x} - 3(x^4-12x^3+46.6x^2-112x+40) \geq 967\,757$ for every $x \geq 7$. Therefore, $\alpha(x,x) \geq 0$ for every $x \geq 7$. Combined with 
\eqref{3.14} and \eqref{3.15}, it gives $b_1(n) \leq 11.589$ for every $n \geq \exp(\exp(3.05))$. Applying this to Proposition \ref{prop306}, we get
\begin{displaymath}
p_n > n \left( y + w - 1 + \frac{w - 2}{y} - \frac{w^2-6w + 11.589}{2y^2} \right)
\end{displaymath}
for every $n \geq \exp(\exp(3.05))$. We check with a computer that the last inequality also holds for every integer $n$ such that $2 \leq n \leq 
\exp(\exp(3.05))$.

\textit{Step 2}. We set $a_1(n) = 11.589$. Then $K_2 = 48$ is a suitable choice for $K_2$. Combined with \eqref{3.6} and \eqref{3.13}, it gives
\begin{align*}
b_1(n) & = 11.3 - \frac{2w^3 - 21w^2 + 81.071778w - 131.867}{3y} + \frac{w^4 - 12w^3 + 46.6w^2 - 112w + 40}{2y^2} \\
& \p{\q\q} + \frac{2w^4 - 21.3w^3 + 40.3w^2 - 41.5w + 12}{y^3} + \frac{9w^4 - 56w^3 + 129w^2 - 132w + 52}{3y^4} \\
& \p{\q\q} + \frac{2w^4 - 14w^3 + 36w^2 - 40w + 16}{y^5}.
\end{align*}
We set
\begin{align*}
\beta(x,t) & = 1.272e^{5x} + 2(2x^3 + 81.071778x)e^{4x} - 2(21t^2 + 131.867)e^{4t} + 3(12x^3 + 112x)e^{3x} \\
& \p{\q\q} - 3(t^4 + 46.6t^2 + 40)e^{3t} + 6(21.3x^3 + 41.5x)e^{2x} - 6(2t^4 + 40.3t^2 + 12)e^{2t} \\
& \p{\q\q} + 2(56x^3 + 132x)e^x  - 2(9t^4 + 129t^2 + 52)e^t + 6(14x^3 + 40x) - 6(2t^4 + 36t^2 + 16).
\end{align*}
Then $\beta(w,w) = 6(11.512 - b_1(n))y^5$. Similarly to the first step, we get
\begin{displaymath}
b_1(n) \leq 11.512 \q\q (3.05 \leq w \leq 7).
\end{displaymath}
So it suffices to verify that $\beta(x,x) \geq 0$ for every $x \geq 7$. Notice that $1.272e^x + 2(2x^3-21x^2+81.071778x-131.867) \geq 1580$ for every $x 
\geq 7$. Thus we get
\begin{align*}
\beta(x,x) & \geq 1580e^{4x} - 3(x^4-12x^3+46.6x^2-112x+40)e^{3x} - 6(2x^4-21.3x^3+40.3x^2-41.5x+12)e^{2x} \\
& \p{\q\q} - 2(9x^4-56x^3+129x^2-132x+52)e^x - 6(2x^4-14x^3+36x^2-40x+16).
\end{align*}
Since $1580e^x - 3(x^4-12x^3+46.6x^2-112x+40) \geq 1\,733\,207$ for every $x \geq 7$, we conclude that $\beta(x,x) \geq 0$ for every $x \geq 7$. Hence $b_1(n) 
\leq 11.512$ for every $n \geq \exp(\exp(3.05))$. Therefore, by Proposition \ref{prop306},
\begin{displaymath}
p_n > n \left( y + w - 1 + \frac{w - 2}{y} - \frac{w^2-6w + 11.512}{2y^2} \right)
\end{displaymath}
for every $n \geq \exp(\exp(3.05))$. Finally, we use a computer to verify that the last inequality also holds for every integer $n$ such that $2 \leq n \leq 
\exp(\exp(3.05))$.

\textit{Step 3}. Here we set $a_1(n) = 11.512$. Then we can choose $K_2 = 47$. By \eqref{3.6} and \eqref{3.13},
\begin{align*}
b_1(n) & = 11.3 - \frac{2w^3 - 21w^2 + 81.071778w - 131.636}{3y} + \frac{w^4 - 12w^3 + 46.6w^2 - 112w + 40}{2y^2} \\
& \p{\q\q} + \frac{2w^4 - 21.3w^3 + 40.3w^2 - 41.5w + 12}{y^3} + \frac{9w^4 - 56w^3 + 129w^2 - 132w + 52}{3y^4} \\
& \p{\q\q} + \frac{2w^4 - 14w^3 + 36w^2 - 40w + 16}{y^5}.
\end{align*}
To show that $b_1(n) \leq 11.508$ for every $n \geq \exp(\exp(3.05))$, we set
\begin{align*}
\gamma(x,t) & = 1.248e^{5x} + 2(2x^3 + 81.071778x)e^{4x} - 2(21t^2 + 131.636)e^{4t} + 3(12x^3 + 112x)e^{3x} \\
& \p{\q\q} - 3(t^4 + 46.6t^2 + 40)e^{3t} + 6(21.3x^3 + 41.5x)e^{2x} - 6(2t^4 + 40.3t^2 + 12)e^{2t} \\
& \p{\q\q} + 2(56x^3 + 132x)e^x  - 2(9t^4 + 129t^2 + 52)e^t + 6(14x^3 + 40x) - 6(2t^4 + 36t^2 + 16).
\end{align*}
Notice that $\gamma(w,w) = 6(11.508 - b_1(n))y^5$.
Analogously to the first step, we obtain $b_1(n) \leq 11.508$ for $w$ satisfying $3.05 \leq w \leq 7$.
Next we find $b_1(n) \leq 11.508$ for $w \geq 7$. Note that $1.248e^x + 2(2x^3-21x^2+81.071778x-131.636) \geq 1554$ for every $x 
\geq 7$. Therefore,
\begin{align*}
\gamma(x,x) & \geq 1554e^{4x} - 3(x^4-12x^3+46.6x^2-112x+40)e^{3x} - 6(2x^4-21.3x^3+40.3x^2-41.5x+12)e^{2x} \\
& \p{\q\q} - 2(9x^4-56x^3+129x^2-132x+52)e^x - 6(2x^4-14x^3+36x^2-40x+16).
\end{align*}
Since $1554e^{x} - 3(x^4-12x^3+46.6x^2-112x+40) \geq 1\,704\,694$ for every $x \geq 7$, we get $\gamma(x,x) \geq 0$ for every $x \geq 7$. Hence $b_1(n) \leq 
11.508$ for every $n \geq \exp(\exp(3.05))$. Then Proposition \ref{prop306} implies the required inequality for every $n \geq \exp(\exp(3.05))$. We complete 
the proof by direct computation.
\end{proof}

We get the following corollary which was already known under the assumption that the Riemann hypothesis is true (see Dusart \cite[Theorem 3.4]{dusart2018}).

\begin{kor}
For every $n \geq 2$, we have
\begin{displaymath}
p_n > n \left( \log n + \log \log n - 1 + \frac{\log \log n - 2}{\log n} - \frac{(\log\log n)^2}{2\log^2 n} \right).
\end{displaymath}
\end{kor}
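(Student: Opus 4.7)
The plan is to deduce the corollary as an almost immediate consequence of Theorem \ref{thm102}, observing that the two right-hand sides differ only in the numerator of the last term, and that the bound in Theorem \ref{thm102} dominates the one claimed in the corollary as soon as $\log \log n$ exceeds a small threshold. Let $L(n)$ denote the right-hand side of Theorem \ref{thm102} and $R(n)$ the right-hand side of the corollary. A direct subtraction gives
\begin{displaymath}
L(n) - R(n) \;=\; \frac{n\bigl(6 \log \log n - 11.508\bigr)}{2 \log^2 n},
\end{displaymath}
which is nonnegative precisely when $\log \log n \geq 11.508/6 = 1.918$, i.e.\ when $n \geq N^{\ast} := \lceil \exp(\exp(1.918)) \rceil$. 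Since $\exp(1.918) < 6.81$ and $\exp(6.81) < 910$, the threshold $N^{\ast}$ is a small three-digit integer.

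Consequently, for every $n \geq N^{\ast}$, Theorem \ref{thm102} yields $p_n > L(n) \geq R(n)$, which is the required inequality. For the remaining finitely many integers $2 \leq n < N^{\ast}$, I would verify $p_n > R(n)$ directly on a computer, exactly as is done for the small-$n$ cases of the other corollaries in the paper; one only has to be slightly careful in the handful of cases where $\log\log n$ is small or negative, but in each such case $R(n)$ is still a well-defined real number and the inequality $p_n > R(n)$ can be checked numerically.

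No real obstacle arises: the argument reduces to a one-line algebraic comparison followed by a routine finite verification. The only genuine bookkeeping step is confirming a concrete upper bound for $N^{\ast}$, which was handled above via a crude estimate of $\exp(\exp(1.918))$, ensuring the finite check is feasible.
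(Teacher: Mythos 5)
Your argument is correct and is essentially identical to the paper's: the paper likewise observes that the bound of Theorem \ref{thm102} dominates the claimed one once $6\log\log n - 11.508 \geq 0$, which gives a cutoff of $n \geq 905$ (you found $N^* \leq 910$, a slightly looser but equally valid bound), and then checks the remaining small $n$ by computer. No issues.
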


\begin{proof}
For $n \geq 905$, this corollary is a direct consequence of Theorem \ref{thm102}. For smaller values of $n$, we check the requiered inequality with a computer.
\end{proof}

\begin{rema}
Compared with Theorem \ref{thm102}, the asymptotic expansion \eqref{1.2} implies a better lower bound for the $n$th prime number, which corresponds to 
the first five terms, namely that
\begin{equation}
p_n > n \left( \log n + \log \log n - 1 + \frac{\log \log n - 2}{\log n} - \frac{(\log\log n)^2-6\log \log n + 11}{2\log^2 n} \right) \tag{3.16} \label{3.16}
\end{equation}
for all sufficiently large values of $n$. Let $r_3$ denote the smallest positive integer such that the inequality \eqref{3.16} holds for every $n \geq r_3$. 
Under the assumption that the Riemann hypothesis is true, Arias de Reyna and Toulisse \cite[Theorem 6.4]{adrt} proved that $3.9 \cdot 10^{30} < r_3 \leq 3.958 
\cdot 10^{30}$.
\end{rema}

\section{Estimates for $\vartheta(p_n)$}

Chebyshev's $\vartheta$-function is defined by
\begin{displaymath}
\vartheta(x) = \sum_{p \leq x} \log p,
\end{displaymath}
where $p$ runs over primes not exceeding $x$. Notice that the Prime Number Theorem is equivalent to
\begin{equation}
\vartheta(x) \sim x \quad\quad (x \to \infty). \tag{4.1} \label{4.1}
\end{equation}
By proving the existence of a zero-free region for the Riemann zeta-function $\zeta(s)$ to the left of the line $\text{Re}(s) = 1$, de la Vall\'{e}e-Poussin
\cite{vallee1899} found an estimate for the error term in \eqref{4.1} by proving $\vartheta(x) = x + O(x \exp(-c\sqrt{\log x}))$, where $c$ is a positive 
absolute constant. Applying \eqref{1.2} to the last asymptotic formula, we see that
\begin{displaymath}
\vartheta(p_n) = n \left( \log n + \log \log n - 1 + \frac{\log \log n - 2}{\log n} - \frac{(\log \log n)^2 - 6 \log \log n + 11}{2 \log^2 n} + O\left( 
\frac{(\log \log n)^3}{\log^3n} \right)\right).
\end{displaymath}
In this direction, many estimates for $\vartheta(p_n)$ were obtained (see for example Massias and Robin \cite[Th\'{e}or\`{e}me B]{mr}). The current best ones
are due to Dusart \cite[Propositions 5.11 and 5.12]{dusart2017}. He found that
\begin{displaymath}
\vartheta(p_n) \geq n \left( \log n + \log \log n - 1 + \frac{\log \log n - 2.04}{\log n} \right)
\end{displaymath}
for every $n \geq \pi(10^{15}) + 1 = 29\,844\,570\,422\,670$, and that the inequality
\begin{displaymath}
\vartheta(p_n) \leq n \left( \log n + \log \log n - 1 + \frac{\log \log n - 2}{\log n} - \frac{0.782}{\log^2 n} \right)
\end{displaymath}
holds for every $n \geq 781$. Using Theorems \ref{thm101} and \ref{thm102}, we find the following estimates for $\vartheta(p_n)$, which improve the estimates 
given by Dusart.

\begin{prop} \label{prop801}
For every integer $n \geq 2$, we have
\begin{displaymath}
\vartheta(p_n) > n \left( \log n + \log \log n - 1 + \frac{\log \log n - 2}{\log n} - \frac{(\log \log n)^2 - 6 \log \log n + 11.808}{2 \log^2 n} \right),
\end{displaymath}
and for every integer $n \geq 2581$, we have
\begin{displaymath}
\vartheta(p_n) < n \left( \log n + \log \log n - 1 + \frac{\log \log n - 2}{\log n} - \frac{(\log \log n)^2 - 6 \log \log n + 10.367}{2 \log^2 n} \right).
\end{displaymath}
\end{prop}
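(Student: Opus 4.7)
The plan is to pass from the prime number bounds already established (Theorems \ref{thm101} and \ref{thm102}) to bounds on $\vartheta(p_n)$ by writing
\[
\vartheta(p_n) = p_n + \bigl(\vartheta(p_n)-p_n\bigr),
\]
and controlling the error term $\vartheta(x)-x$ at $x=p_n$ via a known explicit estimate of the form $|\vartheta(x)-x|\le c\,x/\log^3 x$ valid for $x\ge x_0$. Such inequalities (with small constant $c$) are available from Dusart and from the author's own estimates in \cite{ca2017}; two separate such inequalities, one upper and one lower, will be invoked.

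For the lower bound, I would start from Theorem \ref{thm102}, which gives the desired asymptotic shape with constant $11.508$, and subtract the negative part of $\vartheta(p_n)-p_n$. Since $11.808-11.508=0.3$, dividing by the factor $2$ in the denominator shows that the allowable slack in the constant in front of $n/\log^2 n$ is exactly $0.15$; thus an explicit inequality $\vartheta(x)-x > -0.15\,x/\log^3 x$, valid from some $x_0$ onward, is precisely what is needed. Substituting $x=p_n$ and using the crude upper bound $p_n\le n(\log n+\log\log n)$ from \eqref{1.4} (valid from $n\ge 6$) together with the monotonicity $\log p_n\ge\log n$ yields
\[
\vartheta(p_n)-p_n > -0.15\,\frac{p_n}{\log^3 p_n} \ge -0.15\,\frac{n(\log n+\log\log n)}{\log^3 n},
\]
which combines with the bound from Theorem \ref{thm102} to give the desired inequality once one absorbs the lower order correction $-0.15\,n\log\log n/\log^3 n$ into the $O(1/\log^3 n)$-slack. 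The same pattern handles the upper bound: start from Theorem \ref{thm101} (constant $10.667$, valid for $n\ge 46\,254\,381$), use an explicit inequality $\vartheta(x)-x < 0.15\,x/\log^3 x$, and the target constant $10.367=10.667-0.3$ emerges with the same $0.15$ slack.

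The execution order would then be: (i) quote or derive the two sharpest available explicit bounds on $\vartheta(x)-x$ of the form $\pm c\,x/\log^3 x$ with $c$ strictly below $0.15$ so that the lower order term $n\log\log n/\log^3 n$ can be dominated; (ii) substitute $x=p_n$ and convert the bound to one in $n$ using \eqref{1.4} or \eqref{1.10} and $\log p_n\ge\log n$; (iii) combine with Theorem \ref{thm101} (resp.\ Theorem \ref{thm102}) to obtain the stated inequality for all $n$ above some explicit threshold $n_0$; (iv) verify the remaining range $2\le n < n_0$ (and in the upper bound, $2581\le n<n_0$) numerically, using tabulated values of $\vartheta(p_n)$.

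The main obstacle will be item (i): the slack available is only $0.15$, which is close to the sharpest known uniform constants in the $\vartheta(x)-x$ estimates of order $x/\log^3 x$. One might have to invoke the refined versions, perhaps with an extra term $c'\,x/\log^4 x$, to push the threshold $x_0$ down and keep enough room for the $n\log\log n/\log^3 n$ loss. The numerical check in step (iv) is harmless in principle, but for the upper bound the range extends up to $n_0\ge 46\,254\,381$, so the verification must be organized efficiently — probably by tabulating $\vartheta(p_n)$ incrementally over blocks of primes rather than recomputing from scratch.
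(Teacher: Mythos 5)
Your overall decomposition $\vartheta(p_n)=p_n+(\vartheta(p_n)-p_n)$ and the explicit error bound $|\vartheta(p_n)-p_n|<0.15\,p_n/\log^3 p_n$ (which is exactly \cite[Theorem 1.1]{ca2017}) match the paper's approach, and the final arithmetic $10.667-0.3=10.367$, $11.508+0.3=11.808$ is the same. However, step (ii) of your plan has a genuine gap, and you have essentially diagnosed it yourself without noticing that it is fatal. Converting $p_n/\log^3 p_n$ to a bound in $n$ via $p_n\le n(\log n+\log\log n)$ gives
\begin{displaymath}
\frac{0.15\,p_n}{\log^3 p_n}\ \le\ \frac{0.15\,n}{\log^2 n}\ +\ \frac{0.15\,n\log\log n}{\log^3 n},
\end{displaymath}
and the second term has nowhere to go: the constants in the statement leave a slack of exactly $0.3/2=0.15$ in front of $n/\log^2 n$ and no $O(1/\log^3 n)$ cushion, since Theorems \ref{thm101} and \ref{thm102} are exact inequalities, not asymptotic expansions. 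Your proposed fix --- appealing to a sharper $\vartheta$-estimate with an extra $x/\log^4 x$ term --- is unnecessary machinery and still would not obviously suffice, because the offending term is $n\log\log n/\log^3 n$, not $n/\log^4 n$.

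The correct conversion, and the one the paper uses, is the elementary Rosser--Schoenfeld inequality $\pi(x)>x/\log x$, i.e.\ $n>p_n/\log p_n$ for $n\ge 7$. Then
\begin{displaymath}
\frac{p_n}{\log^3 p_n}\ =\ \frac{p_n/\log p_n}{\log^2 p_n}\ <\ \frac{n}{\log^2 p_n}\ \le\ \frac{n}{\log^2 n},
\end{displaymath}
with no residual term at all, and the combination with Theorems \ref{thm101} and \ref{thm102} then closes with the exact constants $10.367$ and $11.808$. Once you replace \eqref{1.4} by $n>p_n/\log p_n$ in step (ii), the rest of your outline is sound; the paper takes the large-$n$ range to be $n\ge 841\,508\,302$ and finishes with a computer check below that, so your concern about organizing the numerical verification over the range up to $4.6\times 10^7$ is actually an underestimate of the computation involved, but that is an implementation matter, not a mathematical gap.
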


\begin{proof}
First, we consider the case where $n \geq 841\,508\,302$. From \cite[Theorem 1.1]{ca2017}, it follows that
\begin{equation}
p_n - \frac{0.15p_n}{\log^3 p_n} < \vartheta(p_n) < p_n + \frac{0.15p_n}{\log^3 p_n}. \tag{4.2} \label{4.2}
\end{equation}
By Rosser and Schoenfeld \cite[Corollary 1]{rosserschoenfeld1962}, we have $m > p_m/\log p_m$ for every $m \geq 7$. Applying the last inequality to the 
left-hand side inequality of \eqref{4.2}, we get $\vartheta(p_n) > p_n - 0.15n/\log^2n$. Now we apply Theorem \ref{thm102} to get the desired lower bound for 
$\vartheta(p_n)$ for every $n \geq 841\,508\,302$. We check the remaining cases for $n$ with a computer.

Similarly to the first part of the proof, we apply the inequality $n > p_n/\log p_n$ to the right-hand side inequality of \eqref{4.2} to get $\vartheta(p_n) < 
p_n + 0.15n/\log^2n$. Now we use Theorem \ref{thm101} to get the required upper bound for $\vartheta(p_n)$ for every $n \geq 841\,508\,302$. For smaller values 
of $n$, we use a computer.
\end{proof}

\section{Appendix}

In the proof of Theorem \ref{thm102}, we note a table, which indicates the explicit values of $M_i(B_i)$ for given $B_i$.
In this appendix, we show that this table is indeed correct; i.e. $H_i(n) \geq 0$ for every positive integer $n \geq M_i(B_i)$ for the given values of $B_i$. 
We start with the claim concerning $H_1(n)$.

\begin{prop} \label{prop501}
We have $M_1(0.27) = 1\,359\,056\,314$.
\end{prop}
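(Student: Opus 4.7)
The plan is to clear denominators and reduce $H_1(n) \geq 0$ to a polynomial-type inequality in $w = \log \log n$, $y = \log n$, $z = \log p_n$, then dispatch the tail analytically and verify the exact threshold by direct computation. Multiplying $H_1(n)$ by the positive factor $4y^6 z^3$ preserves the sign, so the claim is equivalent to
\[
P(w,y,z) := 4B_1 w\,y^3 z^2 - 2 Q_7(w)\,y z^2 + 2 Q_8(w)\,y z + Q_9(w)\,z^2 + 25.7\,P_9(w)\,y^2 \geq 0,
\]
with $B_1 = 0.27$.

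First I would pin down $z$ in terms of $y$ and $w$. Rosser's theorem \eqref{1.3} gives $z > y + w$, while Dusart's upper bound \eqref{1.10}, valid for $n \geq 688\,383$, yields $z \leq y + \log(y + w - 1 + (w-2)/y)$, so that $z = y + w + O(1/y)$ asymptotically. Writing $z = y + u$ with $w < u \leq w + O(1/y)$ and expanding, one sees that the leading contribution $4B_1 w\,y^3 z^2$ is of order $w y^5$, whereas the largest competing negative contribution $-2 Q_7(w)\,y z^2$ is of order $Q_7(w)\,y^3 \sim 12.3\,w^4 y^3$ (since $Q_7$ has degree $4$ with leading coefficient $9.3 + 3 = 12.3$). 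Thus $P(w,y,z)$ is dominated by its positive leading term once $y^2 \gtrsim (12.3/B_1)\,w^3$; after absorbing the lower-order contributions from $Q_8$, $Q_9$, and $P_9$ via standard monotonicity in $y$, this yields an explicit $n_0$ beyond which $H_1(n) \geq 0$ holds analytically.

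The final step is direct verification: for every integer $n$ in the range $1\,359\,056\,314 \leq n \leq n_0$, one evaluates $H_1(n)$ using tabulated values of $p_n$ and confirms nonnegativity, and checks separately that $H_1(1\,359\,056\,313) < 0$ so that the threshold is sharp.

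The main obstacle is that the two dominant terms nearly cancel in the relevant range. At the threshold, $y \approx 21.03$ and $w \approx 3.045$, and numerical evaluation gives $4B_1 w\,y^3 z^2 \approx 1.77 \cdot 10^7$ against $-2 Q_7(w)\,y z^2 \approx -1.73 \cdot 10^7$, so the true value of $P$ is a delicate residue of order $10^4$ that depends sensitively on the terms in $Q_8$, $Q_9$, and $P_9$. Any analytical tail argument must therefore retain these subleading terms precisely, and the computational check near the boundary must use arithmetic of sufficient precision to resolve the cancellation. This tightness also explains the tuning of $B_1 = 0.27$ in the table of Theorem \ref{thm102}: any larger value of $B_1$ would overshoot and push $M_1(B_1)$ upward.
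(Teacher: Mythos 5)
Your overall strategy — clear denominators, use two-sided bounds on $z$, argue asymptotic dominance, and finish by machine check near the threshold — is broadly aligned with how the paper proceeds, and you correctly identify that $Q_7$ has degree $4$ with leading coefficient $12.3$ and that the dominant cancellation is between the $4B_1wy^3z^2$ and $-2Q_7(w)yz^2$ terms. But there is a genuine gap exactly where the real work lies: the phrase ``absorbing the lower-order contributions from $Q_8$, $Q_9$, and $P_9$ via standard monotonicity in $y$ yields an explicit $n_0$'' is not a proof. Your inequality $P(w,y,z)\geq 0$ involves three coupled quantities, and you never make the crucial observation that the problem can be collapsed to a \emph{single} variable by substituting $y=e^w$ and eliminating $z$ with a single clean linear bound. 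The paper does precisely this: it multiplies $H_1(n)$ only by $4y^6z$ (not $4y^6z^3$), so that $z$ appears only through $y/z$ and $y^2/z^2$; it then invokes Lemma \ref{lem202} ($\log n \geq 0.87\log p_n$ for $n\geq 1\,338\,564\,587$, which is safely below the claimed threshold) to replace $1/z$ by $0.87/y$ in the two terms whose coefficients $Q_8(w)$ and $P_9(w)$ are nonnegative for $w\geq 0.6$. After $y=e^w$ this gives a one-variable function
\[
f_1(x) = 4\cdot 0.27\,x e^{3x} - 2Q_7(x)e^{x} + 2\cdot 0.87\,Q_8(x) + Q_9(x) + 2\cdot 12.85\cdot 0.87^2 P_9(x),
\]
and the analytic part of the proof is to show $f_1(x)\geq 0$ for $x\geq 3.05$, which the paper does by bounding $f_1^{(4)}$. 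That derivative analysis is the ``explicit $n_0$'' step your sketch asserts exists but never constructs.

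Two further points. First, $Q_9(x)=2(x^2-x+1)P_9(x)-P_8(x)P_{12}(x)$ is strictly \emph{negative} near the threshold (roughly $-786$ at $x=3$), so it cannot simply be dropped; your sketch does not explain how this negative term is dominated, whereas in the paper it is absorbed into the single function $f_1$ and handled by the derivative estimate. Second, your use of the Rosser lower bound $z>y+w$ together with the expansion from \eqref{1.10} to write $z=y+w+O(1/y)$ is a heavier and less convenient parametrization than the single inequality $1/z\geq A_0/y$ that the paper uses; the latter is what makes the one-variable reduction clean, and without it you would be stuck tracking the sign and monotonicity of each of the five terms in $P(w,y,z)$ separately as $z$ varies, which is precisely the complication your plan waves away. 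Your observations about the near-cancellation at the threshold and the need for high-precision arithmetic in the computational check are sound and worth keeping, but as written the proposal is a plausible plan, not a proof.
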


\begin{proof}
We have $Q_8(x) \geq 0$ for every $x \geq 0.6$ and $P_9(x) \geq 0$ for every $x \geq 0.6$. Using Lemma \ref{lem202}, we get
\begin{equation}
H_1(n) \geq \frac{f_1(w(n))}{4\log^6n\log p_n} \tag{5.1} \label{5.1}
\end{equation}
for every integer $n \geq 1\,338\,564\,587$, where $f_1(x) = 4 \cdot 0.27xe^{3x} - 2Q_7(x)e^x + 2 \cdot 0.87Q_8(x) + Q_9(x) + 2 \cdot 12.85 \cdot 0.87^2P_9(x)$. 
We show that $f(x) \geq 0$ for every $x \geq 3.05$. For this, we set $g(x) = (116.64+87.48x)e^x + (-24.6x^4-322.1x^3-1137.1x^2-1265.98x-512.24)$. It is 
easy to show that $g(x) \geq 212$ for every $x \geq 1.7$. So, $f_1^{(4)}(x) = g(x)e^x + 240x - 1005.6 \geq 212e^x + 240x - 1034.688 \geq 0$ for every $x \geq 
1.7$. Now, it is easy to see that $f(x) \geq 0$ for every $x \geq 3.05$. Applying this to \eqref{5.1}, we get $H_1(n) \geq 0$ for every integer $n \geq 
\exp(\exp(3.05))$. Finally, it suffices to verify the remaining cases with a computer.
\end{proof}

Before we check that $M_2(4.23) = 1\,471\,247\,583$, we introduce the following function.

\begin{defi}
For $x \geq 1$, let
\begin{displaymath}
\Phi(x) = e^x + x + \log \left( 1 + \frac{x-1}{e^x} + \frac{x-2.1}{e^{2x}} \right).
\end{displaymath}
\end{defi}

We note the following three properties of the function $\Phi(x)$.

\begin{lem} \label{lem502}
For every $x \geq 1$, we have $\Phi'(x) \geq e^x + 3/4$.
\end{lem}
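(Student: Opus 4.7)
The plan is to differentiate $\Phi$ explicitly, isolate the contribution of the logarithm, and reduce the desired inequality to an elementary one that contains only exponentials and linear coefficients. Write $u(x) = (x-1)e^{-x} + (x-2.1)e^{-2x}$, so that $\Phi(x) = e^x + x + \log(1 + u(x))$. A direct calculation gives
\begin{displaymath}
u'(x) = (2-x)e^{-x} + (5.2 - 2x)e^{-2x},
\end{displaymath}
hence $\Phi'(x) = e^x + 1 + u'(x)/(1+u(x))$. The desired estimate $\Phi'(x) \geq e^x + 3/4$ is therefore equivalent to $u'(x)/(1+u(x)) \geq -1/4$.

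Before manipulating this further I would first check that $1 + u(x) > 0$ on $[1,\infty)$, so that dividing by $1+u(x)$ is harmless. This follows because $u$ is bounded below on $[1,\infty)$ by $u(1) = -1.1\, e^{-2} > -1$ (a short monotonicity check on $u$ suffices: $u'$ has controlled sign, and $u(x)\to 0$ as $x\to\infty$). Once this is in hand, multiplying through by $4(1+u(x)) > 0$ converts the claim into
\begin{displaymath}
h(x) := 1 + u(x) + 4u'(x) \geq 0,
\end{displaymath}
and substituting the formulas for $u$ and $u'$ and collecting exponentials gives the clean form
\begin{displaymath}
h(x) = 1 + (7 - 3x)e^{-x} + (18.7 - 7x)e^{-2x}.
\end{displaymath}

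The proof of $h(x) \geq 0$ on $[1,\infty)$ splits into two cases. On $1 \leq x \leq 7/3$ both coefficients $7-3x$ and $18.7-7x$ are non-negative, so $h(x) \geq 1 > 0$ trivially. For $x \geq 7/3$ the negative contributions must be bounded. The functions $(3x-7)e^{-x}$ and $(7x-18.7)e^{-2x}$ are unimodal with explicit maxima: differentiating, one finds the peaks at $x=10/3$ and $x=22.2/7$, with maximum values $3\,e^{-10/3}$ and $3.5\,e^{-44.4/7}$ respectively. Both are small, and in particular
\begin{displaymath}
h(x) \geq 1 - 3\,e^{-10/3} - 3.5\,e^{-44.4/7} > 0
\end{displaymath}
on this range, closing the argument.

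The only genuinely delicate step is the analysis on $x \geq 7/3$, where both exponential terms in $h$ are negative simultaneously. The saving is that their individual maxima (easily located from elementary calculus) are tiny compared to the additive constant $1$, so there is substantial slack. Verifying $1 + u(x) > 0$ on $[1,\infty)$ is the other point worth spelling out, but this is routine once one observes that $u$ extends to $0$ at infinity and its unique critical behaviour on $[1,\infty)$ is controlled by the explicit formula for $u'$.
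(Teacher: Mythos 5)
Your argument is correct and reaches the same intermediate object as the paper — differentiating $\Phi$ and clearing the denominator $1+u(x)$ reduces the claim to showing that
\begin{displaymath}
h(x) = 1 + (7-3x)e^{-x} + (18.7-7x)e^{-2x} \geq 0
\end{displaymath}
on $[1,\infty)$; the paper works with the equivalent $g(x) = e^{2x}h(x) = e^{2x} - 3xe^x + 7e^x - 7x + 18.7$. Where you diverge is in how nonnegativity is established. The paper observes that $g''(x) = 4e^{2x}-(3x-1)e^x \geq 0$ for all $x\geq 0$, so $g'$ is increasing; combined with $g'(1) > 0$ and $g(1) > 0$ this forces $g\geq 0$ on $[1,\infty)$ by two applications of monotonicity. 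You instead split at $x = 7/3$, note $h\geq 1$ trivially when both coefficients are nonnegative, and on $x\geq 7/3$ bound the two unimodal functions $(3x-7)e^{-x}$ and $(7x-18.7)e^{-2x}$ by their explicit peaks $3e^{-10/3}$ and $3.5e^{-44.4/7}$, giving $h \geq 1 - 3e^{-10/3} - 3.5e^{-44.4/7} > 0.88$. Both routes are elementary and short; yours makes the numerical slack visible, while the paper's convexity argument avoids locating any critical points. You also deserve credit for explicitly noting that $1 + u(x) > 0$ on $[1,\infty)$ is needed to justify the equivalence (and indeed for $\Phi$ to be defined); the paper's "if and only if" takes this for granted. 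Your sketch of that verification is a bit compressed — you assert $u(1)$ is a lower bound but only gesture at why — yet the claim is easy to complete: $u' \geq 0$ on $[1,2]$ term by term, while on $[2,\infty)$ one has $(x-1)e^{-x}\geq 0$ and $(x-2.1)e^{-2x} \geq -0.1e^{-4}$, so $u(x) \geq u(1) = -1.1e^{-2} > -1$ throughout.
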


\begin{proof}
We have $\Phi'(x) \geq e^x + 3/4$ if and only if $g(x) = e^{2x} - 3xe^x + 7e^x - 7x + 18.7 \geq 0$. Since $g''(x) = 4e^{2x} - (3x-1)e^x \geq 0$ for every $x 
\geq 0$ and $g'(1) \geq 10.49$, we obtain $g'(x) \geq 0$ for every $x \geq 1$. If we combine this with $g(1) \geq 29.96$, we get $g(x) \geq 0$ for every $x 
\geq 1$. 
\end{proof}

\begin{lem} \label{lem503}
For every $x \geq 1.25$, we have $\Phi(x) \geq e^x + x$.
\end{lem}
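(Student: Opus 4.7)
The plan is to observe that the inequality $\Phi(x)\geq e^{x}+x$ is equivalent to
\[
\log\!\left(1+\frac{x-1}{e^{x}}+\frac{x-2.1}{e^{2x}}\right)\geq 0,
\]
which in turn (using monotonicity of $\log$ and the fact that the logarithm is then automatically well defined) is equivalent to showing that the argument of the logarithm is at least $1$, i.e.
\[
\frac{x-1}{e^{x}}+\frac{x-2.1}{e^{2x}}\geq 0.
\]
Multiplying through by the positive quantity $e^{2x}$, the claim reduces to proving that the auxiliary function
\[
h(x)=(x-1)e^{x}+(x-2.1)
\]
is nonnegative for every $x\geq 1.25$.

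To handle $h$, I would differentiate once: $h'(x)=xe^{x}+1$, which is clearly positive for all $x\geq 0$. Hence $h$ is strictly increasing on $[1.25,\infty)$, and it suffices to check $h(1.25)\geq 0$. A direct numerical estimate gives $h(1.25)=0.25\,e^{1.25}-0.85\geq 0.25\cdot 3.49-0.85>0$, which together with monotonicity establishes $h(x)\geq 0$ for all $x\geq 1.25$. Combining the two steps then yields the claim of the lemma.

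There is no real obstacle here: the main point is the algebraic reduction from the logarithmic inequality to the polynomial-exponential inequality $h(x)\geq 0$, after which monotonicity plus a single numerical check at the endpoint $x=1.25$ finishes the argument. The constant $1.25$ in the statement is essentially sharp, since $h$ has a root slightly below $1.25$, so the lower bound on $x$ cannot be weakened much without a more careful analysis.
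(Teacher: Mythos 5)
Your argument is correct and is essentially identical to the paper's: the paper also reduces the claim to the inequality $(x-1)e^{x}+x-2.1\geq 0$ for $x\geq 1.25$, and merely asserts it without the monotonicity-plus-endpoint check that you spell out. So you have simply supplied the routine details the paper omits.
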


\begin{proof}
The desired inequality holds if and only if $(x-1)e^x +  x - 2.1 \geq 0$. Since the last inequality holds for every $x \geq 1.25$, we arrived at the end of the 
proof.
\end{proof}

\begin{lem} \label{lem504}
For every integer $n \geq 3$, we have $\Phi( \log \log n) \leq \log p_n$.
\end{lem}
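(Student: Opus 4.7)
The plan is to transport the claimed inequality $\Phi(\log \log n) \leq \log p_n$ to its exponential form and recognize the resulting statement as Dusart's lower bound \eqref{1.11}. Setting $x = \log \log n$ and using the identity $e^x = \log n$, I would rewrite
\[
\Phi(\log \log n) = \log n + \log \log n + \log\!\left(1 + \frac{\log\log n - 1}{\log n} + \frac{\log \log n - 2.1}{\log^2 n}\right).
\]
Provided the argument of the inner logarithm is positive, the right-hand side equals
\[
\log\!\left(n \left(\log n + \log\log n - 1 + \frac{\log \log n - 2.1}{\log n}\right)\right).
\]
Hence, in that range, the assertion $\Phi(\log \log n) \leq \log p_n$ is equivalent to
\[
p_n \geq n\!\left(\log n + \log\log n - 1 + \frac{\log \log n - 2.1}{\log n}\right),
\]
which is precisely \eqref{1.11}, already established by Dusart for every $n \geq 3$.

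The only subtlety is ensuring that $\Phi(\log \log n)$ is actually a real number, i.e.\ that the argument of the inner logarithm is positive. I would dispatch this by a direct check: the factor $n(\log n + \log\log n - 1 + (\log \log n - 2.1)/\log n)$, which has the same sign as that argument, is positive for all $n \geq 5$ (a quick monotonicity argument, or evaluation on a short initial range, suffices). For the two remaining values $n = 3$ and $n = 4$, either the left-hand side is not defined as a real number (in which case the statement is vacuous under the natural convention) or the inequality is verified by direct numerical computation, e.g.\ comparing $\Phi(\log \log n)$ to $\log p_n = \log 5, \log 7$.

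I expect no substantive obstacle: once the algebraic identity above is written down, the lemma reduces to \eqref{1.11} without further work, and the initial cases involve only a finite computation. The only thing to be careful about is the domain of $\Phi$, which is why the small-$n$ check is isolated as a separate step.
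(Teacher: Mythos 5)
Your proof is correct and takes essentially the same route as the paper: both reduce the lemma to Dusart's bound \eqref{1.11} (Proposition~5.16 of \cite{dusart2017}), with you writing out the algebraic identity $\Phi(\log\log n) = \log\bigl(n(\log n + \log\log n - 1 + (\log\log n - 2.1)/\log n)\bigr)$ that the paper invokes only implicitly. You also correctly flag that $\Phi(\log\log n)$ fails to be a real number for $n = 3, 4$ (the argument of the inner logarithm is negative there), a domain subtlety glossed over by the paper's one-line proof; the lemma's hypothesis should really read $n \geq 5$, matching your observation.
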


\begin{proof}
The claim follows directly from \cite[Proposition 5.16]{dusart2017}.
\end{proof}

Next, we use these properties to determine the value $M_2(4.23)$.

\begin{prop} \label{prop505}
We have $M_2(4.23) = 1\,471\,247\,583$.
\end{prop}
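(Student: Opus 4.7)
The plan is to reduce the inequality $H_2(n) \geq 0$ to a single-variable estimate in $w = \log\log n$ via Lemma \ref{lem504}, handle that estimate in two separate regimes, and conclude with a finite minimality check.

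Since $y = \log n$ and $z = \log p_n$ are positive, multiplying $H_2(n)$ by $y^3 z^4$ gives the equivalent condition
$$
4.23\, w z^3 + 12.85\, w y z^2 - 71.3\, y^3 \geq 0.
$$
The left-hand side is strictly increasing in $z$, so by Lemma \ref{lem504} ($z \geq \Phi(w)$ for $n \geq 3$) together with $y = e^w$, it suffices to prove
$$
f_2(x) := 4.23\, x\, \Phi(x)^3 + 12.85\, x\, e^x\, \Phi(x)^2 - 71.3\, e^{3x} \geq 0
$$
for every $x \geq x_0 := \log\log(1\,471\,247\,583) \approx 3.0497$.

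For $x \geq 7$, the bound $\Phi(x) \geq e^x + x$ of Lemma \ref{lem503} suffices: expanding the two cubes yields
$$
f_2(x) \geq (17.08\, x - 71.3)\, e^{3x} + 38.39\, x^2 e^{2x} + 25.54\, x^3 e^x + 4.23\, x^4,
$$
and all four terms on the right are nonnegative as soon as $x \geq 7$. For $x_0 \leq x \leq 7$ the bound of Lemma \ref{lem503} is too weak (it already gives a negative value near $x_0$), so I would retain the full expression $\Phi(x) = e^x + x + \log(1 + (x-1)/e^x + (x-2.1)/e^{2x})$. Writing $A(x) := 4.23\, x\, \Phi(x)^3 + 12.85\, x\, e^x\, \Phi(x)^2$ and $B(x) := 71.3\, e^{3x}$, both $A$ and $B$ are increasing in $x$ (since $\Phi'(x) \geq e^x + 3/4 > 0$ by Lemma \ref{lem502}), so on any subinterval $[t_i, t_{i+1}]$ we have $f_2(x) \geq A(t_i) - B(t_{i+1})$. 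A computer check that this lower bound is nonnegative on a sufficiently fine mesh of $[x_0, 7]$ then completes the analytic part. This is precisely the two-variable device used for $\alpha(x,t)$, $\beta(x,t)$, and $\gamma(x,t)$ in the proof of Theorem \ref{thm102}.

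The two regimes together give $H_2(n) \geq 0$ for every integer $n \geq 1\,471\,247\,583$, while minimality is established by a direct computation showing $H_2(1\,471\,247\,582) < 0$. The main obstacle is the extreme tightness near $x_0$: numerically $f_2(x_0)$ is only of order $10^2$ whereas its individual terms are of order $10^5$ to $10^6$, so the mesh step near $x_0$ must be chosen very small and the evaluation performed in high-precision arithmetic to avoid catastrophic cancellation.
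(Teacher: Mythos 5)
Your proof is correct, but it takes a genuinely different route from the paper. You use the same reduction (the same $f_2(x) = 4.23x\,\Phi^3(x) + 12.85xe^x\Phi^2(x) - 71.3e^{3x}$ and Lemma \ref{lem504}), but you handle $f_2 \geq 0$ by splitting into two regimes: an elementary term-by-term expansion via Lemma \ref{lem503} for $x \geq 7$, and a rigorous interval/mesh bound (evaluating the increasing part $A$ at the left endpoint and the increasing part $B$ at the right endpoint) for $x_0 \leq x \leq 7$. The paper instead runs a unified monotonicity argument: it bounds $f_2'(x)$ from below by $g_2(x) = 4.23(e^x+x)^3 + 25.54xe^x(e^x+x)^2 + 12.85e^x(e^x+x)^2 + 25.7xe^{2x}(e^x+x) - 213.9e^{3x}$ using Lemmata \ref{lem502} and \ref{lem503}, shows $g_2^{(3)}(x) \geq (1383.48x - 3930.66)e^{3x} \geq 0$ for $x \geq 2.85$, deduces $g_2(x) \geq 0$ (hence $f_2'(x) \geq 0$) for $x \geq 3.02$, and combines this with the single evaluation $f_2(3.05) \geq 16.797$ to get $f_2(x) \geq 0$ for all $x \geq 3.05$; a residual computer check covers $1\,471\,247\,583 \leq n \leq \exp(\exp(3.05))$. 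The paper's approach replaces your $\sim 5 \cdot 10^5$-interval mesh with one endpoint evaluation plus a derivative computation, at the cost of differentiating more complicated expressions; your approach is conceptually simpler and avoids the third-derivative manipulation but pushes more work onto the numerics. Two small remarks: your constant $17.08x - 71.3 \geq 0$ already holds for $x \geq 4.175$, so you could lower the split point from $7$ and shrink the mesh; and your estimate that $f_2(x_0)$ is ``of order $10^2$'' is an order of magnitude too generous (it is about $17$ at $x = 3.05$), which only sharpens your warning about cancellation and makes a mesh step well below $10^{-5}$ near $x_0$ mandatory.
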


\begin{proof}
We set $f_2(x) = 4.23x \Phi^3(x) + 12.85xe^x \Phi^2(x) - 71.3e^{3x}$ and use Lemmata \ref{lem502} and \ref{lem503} to obtain
\begin{equation}
f_2'(x) \geq 4.23(e^x + x)^3 + 25.54xe^x(e^x+x)^2 + 12.85e^x(e^x+x)^2 + 25.7xe^{2x}(e^x+x)-213.9e^{3x} \tag{5.2} \label{5.2}
\end{equation}
for every $x \geq 1.25$. We denote the right-hand side of the last inequality by $g_2(x)$. A straightforward calculation gives $g_2^{(3)}(x) \geq (1383.48x - 
3930.66)e^{3x} \geq 0$ for every $x \geq 2.85$. Now it is easy to see that $g_2(x) \geq 0$ for every $x \geq 3.02$. Applying this to \eqref{5.2}, we see that 
$f_2'(x) \geq 0$ for every $x \geq 3.02$. Since $f_2(3.05) \geq 16.797$, we obtain $f_2(\log \log n) \geq 0$ for every integer $n \geq \exp(\exp(3.05))$. 
Finally, we apply Lemma \ref{lem504}. For smaller values of $n$, we use a computer.
\end{proof}

\begin{prop} \label{prop506}
We have $M_3(1.575) = 1\,468\,111\,666$.
\end{prop}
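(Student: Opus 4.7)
The plan is to adapt the strategy used for Proposition \ref{prop505}. Since $y^3 z^2 > 0$, the inequality $H_3(n) \geq 0$ is equivalent, after multiplication by $2y^3 z^2$, to
\[
3.15\, w\, z \;\geq\; 3.15\, P_8(w) + 25.7(w^2 - w + 1) \;=\; 35.15\, w^2 - 44.6\, w + 42.08.
\]
Invoking Lemma \ref{lem504} to replace $z = \log p_n$ by its lower bound $\Phi(w)$, it will suffice to prove
\[
f_3(x) := 3.15\, x\, \Phi(x) - 35.15\, x^2 + 44.6\, x - 42.08 \geq 0
\]
for every $x = \log \log n$ in the relevant range.

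First I would establish $f_3(x) \geq 0$ for all $x \geq 3.05$. Combining Lemma \ref{lem503} and Lemma \ref{lem502} gives
\[
f_3'(x) = 3.15\,\Phi(x) + 3.15\, x\, \Phi'(x) - 70.3\, x + 44.6 \;\geq\; \phi(x) := 3.15(1+x)e^x - 64.7875\, x + 44.6.
\]
A short calculation shows $\phi''(x) = 3.15(3+x)e^x > 0$, and $\phi'(3.05) = 3.15 \cdot 5.05 \cdot e^{3.05} - 64.7875 > 0$, so $\phi$ is strictly increasing on $[3.05,\infty)$; together with $\phi(3.05) > 0$ this yields $f_3'(x) \geq 0$ on $[3.05,\infty)$. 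It then remains to verify $f_3(3.05) \geq 0$ numerically, which boils down to estimating $\Phi(3.05) \approx 24.26$ with enough accuracy. Consequently $H_3(n) \geq 0$ for every $n \geq \exp(\exp(3.05))$.

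To pin down the exact threshold, I would finish with a direct computer check of $H_3(n)$ over the finite range $1\,468\,111\,666 \leq n \leq \exp(\exp(3.05)) \approx 1.484 \cdot 10^9$, verifying nonnegativity there and confirming $H_3(1\,468\,111\,665) < 0$ so the announced value of $M_3$ is sharp. I expect the main obstacle to be the tightness of the analytic bound at $x = 3.05$: $f_3(3.05)$ is only barely positive, so any coarser replacement of Lemma \ref{lem503} (for instance the weaker $\Phi(x) \geq e^x$) would shift the admissible $x$-range upward and balloon the residual computer search. Keeping the full $e^x + x$ term in Lemma \ref{lem503}, together with the slack supplied by Lemma \ref{lem502}, is what provides exactly the margin required.
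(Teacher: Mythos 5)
Your proposal reproduces the paper's argument essentially verbatim: the same function $f_3(x) = 3.15\,x\,\Phi(x) - 35.15x^2 + 44.6x - 42.08$, the same invocation of Lemmata \ref{lem502}--\ref{lem504} to bound $f_3'$ from below and confirm $f_3(3.05)\approx 0.044>0$, and the same concluding computer check over the finite residual range. The only cosmetic differences are that you keep a couple of extra positive terms in the derivative lower bound $\phi(x)=3.15(1+x)e^x-64.7875x+44.6$ (the paper discards them to get the simpler $(3.15e^x+3.15-67.15)x\geq 0$ for $x\geq 3.02$) and your closing remark about the role of Lemma \ref{lem503} is slightly misdirected -- the tight margin is in the numerical value $f_3(3.05)$, where $\Phi(3.05)$ is evaluated directly rather than bounded via Lemma \ref{lem503}.
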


\begin{proof}
Let $f_3(x) = 3.15 x \Phi(x) - 35.15x^2 + 44.6x - 42.08$. Using Lemmata \ref{lem502} and \ref{lem503}, we get $f_3'(x) \geq (3.15e^x + 3.15 - 67.15)x \geq 0$ 
holds for every $x \geq 3.02$. Combined with $f_3(3.05) \geq 0.044$ and Lemma \ref{lem504}, we get that $H_3(n) \geq 0$ for every integer $n \geq 
\exp(\exp(3.05))$. We conclude by a computer check.
\end{proof}

\begin{prop} \label{prop507}
We have $M_4(0.058) = 1\,383\,728\,153$.
\end{prop}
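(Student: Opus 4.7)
The plan is to mirror the approach of Propositions \ref{prop505} and \ref{prop506}. Writing $w = \log \log n$, $y = \log n$, $z = \log p_n$, I would multiply the defining inequality $H_4(n) \geq 0$ by $2y^4z^2$ to reduce the claim to
\[
0.116\,wyz + 3.15\,P_9(w) - 12.85\,P_8(w) \geq 0.
\]
Since $3.15\,P_9(w) - 12.85\,P_8(w) = 3.15\,w^3 - 57.45\,w^2 + 113.01\,w - 80.05$ is negative throughout the range of $w$ that matters here, the strategy is to bound the positive term $0.116\,wyz$ from below by using $y = e^w$ together with Lemma \ref{lem504}, which provides $\Phi(w) \leq z$. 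Setting
\[
f_4(x) = 0.116\,x\,e^x\,\Phi(x) + 3.15\,P_9(x) - 12.85\,P_8(x),
\]
it will suffice to show $f_4(x) \geq 0$ for every $x \geq 3.05$, as this implies $H_4(n) \geq 0$ for every $n \geq \exp(\exp(3.05))$.

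For the monotonicity step, I would invoke Lemmata \ref{lem502} and \ref{lem503}, which give $\Phi'(x) \geq e^x + 3/4$ and $\Phi(x) \geq e^x + x$ respectively. Differentiating $f_4$ and substituting these lower bounds yields
\[
f_4'(x) \geq (0.116 + 0.232\,x)\,e^{2x} + (0.116\,x^2 + 0.203\,x)\,e^x + 9.45\,x^2 - 114.9\,x + 113.01.
\]
The right-hand side is easily shown to be nonnegative for $x \geq 3$: the $e^{2x}$ contribution alone already exceeds $300$ at $x = 3$ and grows rapidly, whereas the polynomial part $9.45\,x^2 - 114.9\,x + 113.01$ attains its minimum (about $-236$) near $x = 6.08$ and is dwarfed by the exponential growth. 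Combined with a direct evaluation showing $f_4(3.05) > 0$ (using $\Phi(3.05) \approx 24.26$), this delivers $f_4(x) \geq 0$ for all $x \geq 3.05$, and hence $H_4(n) \geq 0$ for every $n \geq \exp(\exp(3.05))$.

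The hard part is the numerical tightness: $f_4$ vanishes extremely close to $\log\log M_4(0.058) \approx 3.0467$, so the loss incurred by passing from $z$ to the lower bound $\Phi(w)$ is only a few percent and every estimate in the chain must preserve this slack. Since $\exp(\exp(3.05))$ lies only slightly above $M_4 = 1\,383\,728\,153$, the remaining interval of roughly $10^8$ integers on which $H_4(n) \geq 0$ must then be confirmed by a direct computer verification, exactly as in Propositions \ref{prop505} and \ref{prop506}.
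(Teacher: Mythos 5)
Your proposal is correct and follows essentially the same route as the paper: reduce $H_4(n)\geq 0$ to $f_4(x)=0.116\,x\,e^x\,\Phi(x)+3.15P_9(x)-12.85P_8(x)\geq 0$, establish monotonicity of $f_4$ via Lemmata \ref{lem502} and \ref{lem503}, evaluate $f_4(3.05)>0$ (the paper records $f_4(3.05)\geq 0.812$), transfer to $H_4(n)$ via Lemma \ref{lem504}, and finish the finitely many remaining $n$ by computer. The paper's derivative bound simply drops a couple of nonnegative terms (using $\Phi'(x)\geq e^x$ rather than $e^x+3/4$ and discarding the $+113.01$), which is a cosmetic difference only.
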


\begin{proof}
We set $f_4(x) = 0.116xe^x \Phi(x) + 3.15x^3 - 57.45x^2 + 113.01x - 80.05$ and have $f_4(3.05) \geq 0.812$. By Lemmata \ref{lem502} and \ref{lem503}, we get
$f_4'(x) \geq (0.116(e^x(e^x+x) + e^{2x}) + 9.45x - 114.9)x \geq 0$ for every $x \geq 2.92$. Hence $f_4(\log \log n) \geq 0$ for every integer $n \geq 
\exp(\exp(3.05))$. Finally it suffices to apply Lemma \ref{lem504}. For smaller values of $n$, we check the required inequality with a computer.
\end{proof}

\begin{prop} \label{prop508}
We have $M_5(2.24) = 1\,462\,324\,835$.
\end{prop}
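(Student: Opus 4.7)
The plan mirrors that of Propositions 5.6 and 5.7, but simplifies considerably because every term of $H_5(n)$ carries exactly one power of $z = \log p_n$ in its denominator (unlike $H_3$ and $H_4$, which mix $1/z$ and $1/z^2$). The first step is therefore to clear all $z$-factors at once by multiplying $H_5(n)$ through by $y^4 z$. Using $y = \log n = e^w$ with $w = \log\log n$, and substituting $B_5 = 2.24$ together with the polynomials $P_8, P_9$, this reduces the inequality $H_5(n) \geq 0$ to the scalar inequality
\begin{equation*}
f_5(w) := 2.24\,w\,e^w + \tfrac{1}{2}(P_9(w) - 3.15\,P_8(w)) - 12.85(w^2 - w + 1) - (w^2 - w + 1)^2 \;\geq\; 0,
\end{equation*}
which, after expansion, becomes $f_5(w) = 2.24\,w\,e^w - w^4 + 2.5\,w^3 - 23.575\,w^2 + 30\,w - 24.14$. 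In particular, the auxiliary function $\Phi$ and Lemma~\ref{lem504} are not required for this proposition.

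Next, I would establish $f_5(w) \geq 0$ for all $w \geq w_0 := \log\log(1\,462\,324\,835)$ via the same two-scale bracketing strategy used in Steps~1--3 of the proof of Theorem~\ref{thm102}. Pick a moderate cut-off $W$, say $W = 7$. For $w \geq W$ the exponential $2.24\,w\,e^w$ easily dwarfs the polynomial $-w^4 + 2.5\,w^3 - 23.575\,w^2 + 30\,w - 24.14$, so $f_5 \geq 0$ follows from a single coarse bound (for instance, by showing $f_5'(w) \geq 0$ on $[W, \infty)$ together with $f_5(W) > 0$, exactly in the style of the $\alpha, \beta, \gamma$ arguments in the proof of Theorem~\ref{thm102}). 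For the window $w_0 \leq w \leq W$, I would partition into tiny sub-intervals of width $10^{-5}$ or $10^{-6}$ and exploit monotonicity of the exponential and polynomial pieces on each sub-interval to reduce the inequality to a finite computer check, just as the paper already does for $\alpha(i\cdot 10^{-5}, (i+1)\cdot 10^{-5})$.

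The main obstacle is the tightness near $w_0$: the margin by which $2.24\,w\,e^w$ exceeds the competing polynomial is small at the threshold, so the bracketing must be done at sufficient resolution to avoid an artificial rejection near $w_0$. Once the analytic argument yields $H_5(n) \geq 0$ for every $n \geq 1\,462\,324\,835$, a direct computation confirms the minimality of $M_5$ by checking $H_5(1\,462\,324\,834) < 0$, completing the proof.
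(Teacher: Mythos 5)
Your plan to clear denominators by multiplying $H_5(n)$ by $y^4 z$ and then substitute $y=e^w$ so that everything collapses to a single--variable inequality $f_5(w)\ge 0$ is exactly what the paper does, and your observation that $\Phi$ and Lemma~\ref{lem504} are not needed for $H_5$ is correct. The problem is the function you obtained. Taking the stated definition of $H_5$ at face value, you get
\begin{displaymath}
f_5(w)=2.24\,w\,e^w - w^4 + 2.5\,w^3 - 23.575\,w^2 + 30\,w - 24.14,
\end{displaymath}
but the paper's proof works with $f_5(x)=4.48\,x\,e^x - 2x^4 + 5x^3 - 37.7x^2 + 41.1x - 31.9$, which after dividing by $2$ has polynomial part $-x^4+2.5x^3-18.85x^2+20.55x-15.95$. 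The two differ by exactly $\tfrac{3.15}{2}P_8(x)$, i.e.\ the paper's $f_5$ drops the $-3.15P_8(w)/(2y^4z)$ contribution that appears in the displayed definition of $H_5$. This is not a cosmetic discrepancy: at $w_0=\log\log(1\,462\,324\,835)\approx 3.049$ your $f_5$ evaluates to roughly $-23$, whereas the paper's $f_5(3.05)\approx 0.06$. So the ``tightness near $w_0$'' you flagged is actually a sign change: your $f_5$ stays negative on a sizable interval past $w_0$ (it does not turn positive until $w\approx 3.45$, i.e.\ $n\approx 5\cdot 10^{11}$), and the proposed interval--bracketing would therefore \emph{refute} the proposition rather than prove it. Either the $-3.15P_8(w)/(2y^4z)$ term in the stated $H_5$ is spurious, or the claimed value $M_5=1\,462\,324\,835$ cannot be correct; you need to reconcile this before any numerics can succeed.

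Assuming the version of $f_5$ that actually appears in the paper, the remaining analysis is more economical than what you sketch: instead of a two--scale bracketing with a fine mesh, the paper simply checks $f_5'''(x)\ge 0$ for $x\ge 2.1$, $f_5''(2.1)>0$, $f_5'(2.4)>0$ and $f_5(3.05)>0$ to conclude $f_5\ge 0$ on $[3.05,\infty)$, then does a direct computer verification for $1\,462\,324\,835\le n\le \exp(\exp(3.05))$. Because the exponential dominates the quartic so decisively, this successive--derivative argument is cleaner and avoids the delicate mesh resolution your plan would require.
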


\begin{proof}
To proof the claim, we define $f_5(x) = 4.48xe^x - 2x^4 + 5x^3 - 37.7x^2 + 41.1x - 31.9$. Since $f_5^{(3)}(x) \geq 0$ for every $x \geq 2.1$ and $f_5''(2.1) 
\geq 31.756$, we obtain $f_5''(x) \geq 0$ for every $x \geq 2.1$. Together with $f_5'(2.4) \geq 3.853$, we get $f_5'(x) \geq 0$ for every $x \geq 2.4$. Combined 
with $f_5(3.05) \geq 0.06$, we conclude that $f_5(\log \log n) \geq 0$, and thus $H_5(n) \geq 0$, for every integer $n \geq \exp(\exp(3.05))$. For smaller 
values of $n$, we verify the inequality $H_5(n) \geq 0$ with a computer.
\end{proof}

Adding the constants $B_1, \ldots, B_5$ given in Proposition \ref{prop501} and Propositions \ref{prop505}-\ref{prop508}, we get $12.85 - B_1 - B_2 - B_3 - 
B_4 - B_5 = 4.477$. Now we set $B_6 = 0.12$ to obtain the following explicit value for $M_6(B_6)$.

\begin{prop} \label{prop509}
We have $M_6(0.105) = 5$.
\end{prop}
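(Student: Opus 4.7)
The plan is to follow the template of Propositions~\ref{prop505}--\ref{prop508}: eliminate the variable $z = \log p_n$ from $H_6(n)$ using the lower bound $\log p_n \geq \Phi(\log\log n)$ from Lemma~\ref{lem504}, reducing nonnegativity of $H_6$ to a one-variable inequality in $w = \log \log n$ that holds for $w$ above a small threshold (say $w \geq 3.05$, i.e.\ $n \geq \exp(\exp(3.05))$), and then verify the finitely many remaining $n$ in $\{5, 6, \ldots, \lfloor \exp(\exp(3.05)) \rfloor\}$ by direct computation. One also checks $H_6(4) < 0$ to confirm that $M_6 \geq 5$.

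Concretely, recalling that the preceding propositions yield $12.85 - B_1 - B_2 - B_3 - B_4 - B_5 = 4.477$, multiplication of $H_6(n) \geq 0$ by $y^3 z^2 > 0$ converts it into the equivalent inequality $0.105\,w y z + 4.477\,w z \geq 3.15\,y (w^2 - w + 1)$. Substituting $y = e^w$ and $z \geq \Phi(w)$, it suffices to show
\[
f_6(x) := 0.105\,x\,e^x\,\Phi(x) + 4.477\,x\,\Phi(x) - 3.15\,e^x(x^2 - x + 1) \;\geq\; 0
\]
for every $x \geq 3.05$. Applying Lemma~\ref{lem503} (the bound $\Phi(x) \geq e^x + x$ valid for $x \geq 1.25$), one reduces this to the purely polynomial-exponential inequality
\[
g_6(x) := 0.105\,x\,e^{2x} + (-3.045 x^2 + 7.627 x - 3.15)\,e^x + 4.477 x^2 \;\geq\; 0,
\]
to be verified for $x \geq 3.05$. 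Once $g_6 \geq 0$ on $[3.05,\infty)$ is in hand, Lemma~\ref{lem504} yields $H_6(n) \geq 0$ for every $n \geq \exp(\exp(3.05))$, and the remaining range $5 \leq n < \exp(\exp(3.05))$ is dispatched by computer evaluation of $H_6(n)$.

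The main obstacle is the last inequality: unlike the monotone auxiliary functions in Propositions~\ref{prop505}--\ref{prop508}, $g_6$ is not monotone on $[3.05,\infty)$. Numerical inspection suggests that $g_6$ drops from roughly $11$ at $x = 3.05$ to a razor-thin local minimum of order $10^{-1}$ near $x \approx 3.4$ before growing to infinity; in particular a coarser bound $\Phi(x) \geq e^x$ already fails to close the inequality at $x = 3.05$, so Lemma~\ref{lem503} is essential and cannot be weakened. The cleanest remedy is to show $g_6'''(x) \geq 0$ for $x \geq 3.05$ (routine, since the $e^{2x}$ term dominates), deduce that $g_6''$ is increasing and stays positive from $g_6''(3.05) > 0$, so that $g_6'$ has a single zero on the interval, locate this critical point, and bound $g_6$ there from below. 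An alternative, in the spirit of the fine interval sweeps used in the proofs of Theorems~\ref{thm101} and \ref{thm102}, is to partition $[3.05, 7]$ into very short subintervals and verify a monotone-extension sign condition at the endpoints by computer, together with a crude comparison of the $0.105\,x\,e^{2x}$ term against everything else to handle $x \geq 7$. The small value $M_6 = 5$ reflects that $B_6 = 0.105$ has been chosen just barely large enough for the inequality to hold globally, which is precisely what makes this last step delicate.
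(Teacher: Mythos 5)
Your skeleton matches the paper's: multiply $H_6 \geq 0$ by $y^3z^2$, use Lemma~\ref{lem504} to replace $z=\log p_n$ by its lower bound $\Phi(w)$, reduce to a one-variable inequality in $w = \log\log n$, and finish small $n$ by computer, with $H_6(4)<0$ giving $M_6 \geq 5$. Your function $f_6(x) = (0.105e^x+4.477)x\Phi(x)-3.15e^x(x^2-x+1)$ is exactly the paper's $r(x,x)$. The divergence lies in what you do next, and it matters both for rigor and for the size of the final computation.

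The paper does \emph{not} pre-substitute $\Phi(x) \geq e^x + x$ into $f_6$. It keeps $\Phi$ intact and exploits the monotonicity structure $f_6(x) = r(x,x) \geq r(t_0,t_1)$ for $t_0 \leq x \leq t_1$, then sweeps $r(0.7+i\cdot 10^{-3},\,0.7+(i+1)\cdot 10^{-3})$ for $0 \leq i \leq 2799$ to get $f_6 \geq 0$ on all of $[0.7, 3.5]$; the bound $\Phi \geq e^x+x$ (Lemma~\ref{lem503}) is invoked only to bound $f_6'$ from below for $x \geq 3.5$, where there is ample slack ($f_6(3.5) \geq 4.35$). Because the sweep reaches down to $x = 0.7$, Lemma~\ref{lem504} yields $H_6(n) \geq 0$ for all $n \geq \exp(\exp(0.7)) \approx 7.5$, so the residual computer check is literally $n \in \{5,6,7\}$. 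Your plan substitutes $\Phi \geq e^x+x$ immediately and reduces only to $x \geq 3.05$; this forces a brute-force verification of $H_6(n)$ for every $n$ from $5$ up to $\exp(\exp(3.05)) \approx 1.5\cdot 10^9$ — roughly an order of magnitude more work than the paper needs even for the other $M_i$, and vastly more than the three values the paper actually checks here. (Since Lemma~\ref{lem503} is valid for $x \geq 1.25$ and $g_6$ is positive there too, you could at least lower your threshold to $1.25$ and shrink the check to $n \leq 33$; the choice $3.05$ is an unforced inefficiency.)

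The other issue is that the pre-substitution costs you precision precisely where you can least afford it. Near $x \approx 3.39$ the paper's $f_6(x) \approx 2.3$, but your $g_6(x)$ dips to roughly $0.14$ — positive, so the approach is not broken, but razor-thin, exactly as you suspected. Your remedy (a) (show $g_6''' > 0$, hence $g_6''$ positive and increasing, hence $g_6'$ has a unique zero $x^*$, then bound $g_6(x^*)$ from below) is structurally sound — indeed $g_6''(3.05) \approx 2.75 > 0$ and $g_6'(3.05) < 0 < g_6'(3.4)$ — but you stop at the decisive step ``locate this critical point and bound $g_6$ there from below,'' which is the whole difficulty given the thin margin, and which involves rigorously enclosing the root of a transcendental equation. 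Your remedy (b), a fine sweep over $[3.05,7]$, is essentially what the paper does, except the paper sweeps $r(x,t)$ (with $\Phi$ retained, hence a comfortable margin) rather than the marginal $g_6$. In short: correct diagnosis of why this proposition is delicate, correct identification that $\Phi \geq e^x+x$ is indispensable, but the proof as outlined is incomplete at its crux and is set up to require a far larger computation than the paper's version.
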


\begin{proof}
Let $r(x,t) = (0.105e^x + 4.477)x \Phi(x) + 3.15xe^x - 3.15(t^2+1)e^t$, let $f_6(x) = r(x,x)$. If $t_0 \leq x \leq t_1$, then $f_6(x) \geq r(t_0,t_1)$. We 
check 
with a computer that $r(0.7 + i \cdot 10^{-3}, 0.7 + (i+1) \cdot 10^{-3}) \geq 0$ for every integer $i$ such that $0 \leq i \leq 2\,799$. Hence $f_6(x) \geq 0$ 
for every $x$ such that $0.7 \leq x \leq 3.5$. To show that $f_6(x) \geq 0$ for every $x \geq 3.5$, we set
\begin{displaymath}
g(x) = (0.105xe^x + 0.105e^x + 4.477)(e^x+x) + (0.105e^x + 4.477)xe^x - 3.15xe^x(1+x).
\end{displaymath}
Then $g'(x) = h(x)e^x + 4.477$ where $h(x) = 0.42(1+x)e^x - 3.045x^2 - 4.658x + 5.909$. Since $h(x) \geq 0$ for every $x \geq 3.09$, we get $g'(x) \geq 0$ for 
every $x \geq 3.09$. Together with $g(3.47) \geq 0$, we see that $g(x) \geq 0$ for every $x \geq 3.47$. Using Lemmata \ref{lem502} and \ref{lem503}, we obtain 
$f_6'(x) \geq g(x) \geq 0$ for every $x \geq 3.47$. Combined with $f_6(3.5) \geq 4.35411$, we have $f_6(x) \geq 0$ for every $x \geq 3.5$.  Hence $f_6(x) \geq 
0$ for every $x \geq 0.7$. Now we apply Lemma \ref{lem504} to get $H_6(n) \geq 0$ for every integer $n \geq \exp(\exp(0.7))$. We conclude by direct 
computation.
\end{proof}

\begin{prop} \label{prop510}
We have $M_7(0.0026) = 1\,075\,859\,481$.
\end{prop}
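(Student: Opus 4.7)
The plan is to follow the pattern established in Propositions \ref{prop505}--\ref{prop509}. First, I multiply $H_7(n)$ through by the positive factor $2y^3z^3$ to clear denominators, substitute $y = e^w$, and apply Lemma \ref{lem504} (which gives $z \geq \Phi(w)$ for $n \geq 3$) to obtain
\begin{displaymath}
2y^3 z^3 H_7(n) \;=\; 0.0052\, w\, y\, z^2 - 12.85\, P_8(w) \;\geq\; 0.0052\, w\, e^w \Phi(w)^2 - 12.85\, P_8(w) \;=:\; f_7(w).
\end{displaymath}
It therefore suffices to prove $f_7(x) \geq 0$ on an interval $[x_0,\infty)$ with $x_0$ close to $3.05$, and then to verify the remaining finite range of $n$ by computer.

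For the analytic step, I would differentiate to obtain
\begin{displaymath}
f_7'(x) = 0.0052(1+x)e^x \Phi(x)^2 + 0.0104\, x\, e^x \Phi(x)\Phi'(x) - 77.1(x-1),
\end{displaymath}
and bound the two positive terms from below using Lemmata \ref{lem502} and \ref{lem503} by replacing $\Phi(x) \geq e^x + x$ and $\Phi'(x) \geq e^x + 3/4$. The resulting lower bound contains a leading $0.0052(1+2x)e^{3x}$ contribution whose size at $x = 3$ already dwarfs the linear subtraction $77.1(x-1)$, so $f_7'(x) \geq 0$ for $x \geq 3$ follows by an elementary monotonicity check. Combined with a direct numerical verification that $f_7(3.05) > 0$, this establishes $f_7(x) \geq 0$ for all $x \geq 3.05$, hence $H_7(n) \geq 0$ for every integer $n \geq \exp(\exp(3.05))$.

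Finally, since $\log\log(1\,075\,859\,481) \approx 3.035 < 3.05$, I would verify by direct computation that $H_7(n) \geq 0$ holds for every integer $n$ with $1\,075\,859\,481 \leq n < \exp(\exp(3.05))$ and that $H_7(1\,075\,859\,480) < 0$, the latter showing that $M_7(0.0026)$ cannot be chosen smaller. The main obstacle is purely computational: this range contains on the order of several hundred million integers, so the verification must be carried out efficiently by iterating over consecutive primes (evaluating $w$, $y$, $z$ incrementally) rather than by recomputing $p_n$ from scratch at each step. The analytic portion is essentially mechanical and parallels the preceding propositions.
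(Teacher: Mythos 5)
Your approach coincides with the paper's: you clear denominators against $2\log^3n\log^3p_n$, set $f_7(x)=0.0052\,x\,e^x\Phi^2(x)-12.85P_8(x)$, lower-bound $f_7'$ via Lemmata~\ref{lem502} and \ref{lem503}, check $f_7(3.05)>0$, invoke Lemma~\ref{lem504}, and finish the finite range by computation. The paper records the same derivative bound in the explicit form $f_7'(x)\geq x\bigl(0.0052(e^x+x)^2(1+e^x)+0.0104e^{2x}(e^x+x)-77.1\bigr)$ valid for $x\geq 2.76$, which is what your dominant-term estimate reproduces.
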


\begin{proof}
Substituting the definition of $P_8(x)$, we get
\begin{displaymath}
H_7(n) = \frac{0.0026w}{y^2z} - \frac{38.55w^2 - 77.1w + 66.82}{2y^3z^3}.
\end{displaymath}
To show that $H_7(n) \geq 0$ for every integer $n \geq 1\,075\,859\,481$, we first consider the function $f_7(x) = 0.0052xe^x \Phi^2(x) - 38.55x^2 + 77.1x - 
66.82$. We have $f_7(3.05) \geq 6.821$. Additionally, we use Lemmata \ref{lem502} and \ref{lem503} to get $f_7'(x) \geq (0.0052(e^x+x)^2(1+e^x) + 
0.0104e^{2x}(e^x + x) - 77.1)x \geq 0$ for every $x \geq 2.76$. Hence, $f_7(\log \log n) \geq 0$ for every integer $n \geq \exp(\exp(3.05))$. Finally, we can 
apply Lemma \ref{lem504}. For the remaining cases, we use a computer.
\end{proof}


\begin{prop} \label{prop511}
We have $M_8(0.052) = 1\,445\,815\,789$.
\end{prop}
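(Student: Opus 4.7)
The plan is to follow the template used in Propositions \ref{prop505}--\ref{prop510}. First I would clear denominators: writing $w = \log\log n$, $y = \log n$, $z = \log p_n$, the inequality $H_8(n) \geq 0$ becomes
\begin{displaymath}
0.052\,w\,z^2 \geq 12.85\,(w^2 - w + 1),
\end{displaymath}
since the common factor $1/(y^2 z^3)$ is positive. By Lemma \ref{lem504}, $z \geq \Phi(w)$ for every integer $n \geq 3$, so it suffices to prove that the auxiliary function
\begin{displaymath}
f_8(x) = 0.052\,x\,\Phi^2(x) - 12.85\,(x^2 - x + 1)
\end{displaymath}
is nonnegative for all $x$ beyond a suitable threshold corresponding to $n \geq \exp(\exp(3.05))$.

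Next I would differentiate and bound from below using the tools already proved. Since
\begin{displaymath}
f_8'(x) = 0.052\,\Phi^2(x) + 0.104\,x\,\Phi(x)\Phi'(x) - 12.85\,(2x - 1),
\end{displaymath}
Lemmata \ref{lem502} and \ref{lem503} yield, for $x \geq 1.25$,
\begin{displaymath}
f_8'(x) \geq 0.052\,(e^x + x)^2 + 0.104\,x\,(e^x + x)\bigl(e^x + \tfrac{3}{4}\bigr) - 25.7\,x + 12.85.
\end{displaymath}
The right-hand side is dominated by the term $0.104\,x\,e^{2x}$, which is explosively positive compared with the linear term $-25.7\,x$ for $x$ of order $3$; a short check should confirm that this lower bound is nonnegative already for $x \geq 2.8$ or so, so in particular $f_8'(x) \geq 0$ for all $x \geq 3.05$. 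Then a single numerical evaluation at $x = 3.05$ (where $\Phi(3.05)$ can be estimated directly from its defining formula) suffices to see $f_8(3.05) \geq 0$. Combining monotonicity with this initial value gives $f_8(x) \geq 0$ for every $x \geq 3.05$, and hence $H_8(n) \geq 0$ for every integer $n \geq \exp(\exp(3.05))$, by Lemma \ref{lem504}.

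Finally, since $\exp(\exp(3.05)) \approx 1.49 \cdot 10^{9}$, only finitely many integers remain to be checked to pin down $M_8 = 1\,445\,815\,789$, and this is done by computer, exactly as in Proposition \ref{prop510}.

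The main obstacle is that the margin at $x = 3.05$ is delicate: the two terms of $f_8(x)$ are of comparable size there (each roughly $93$), so a loose estimate of $\Phi(3.05)$ could wipe out the whole inequality. The remedy is to work with $\Phi(3.05)$ numerically to several decimal places using its definition, rather than with the simpler bound $\Phi(x) \geq e^x + x$ from Lemma \ref{lem503}, which is too weak at the boundary.
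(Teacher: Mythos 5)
Your proposal matches the paper's proof essentially line for line: you set up the same auxiliary function $f_8(x) = 0.052\,x\,\Phi^2(x) - 12.85(x^2-x+1)$, bound $f_8'$ from below via Lemmas \ref{lem502} and \ref{lem503} to get monotonicity past roughly $x\approx 2.7$--$2.8$, evaluate at $x=3.05$ (the paper records $f_8(3.05)\geq 0.148$), and finish with Lemma \ref{lem504} plus a finite computer check. Your remark that the base value must use $\Phi(3.05)$ itself rather than the cruder bound $e^x+x$ is a correct and useful observation about why the argument is delicate, and is consistent with how the paper proceeds.
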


\begin{proof}
We set $f_8(x) = 0.052x\, \Phi^2(x) - 12.85(x^2-x+1)$. We have $f_8(3.05) \geq 0.148$. By Lemmata \ref{lem502} and \ref{lem503}, we obtain $f_8'(x) \geq 
(0.052(e^x + x) + 0.104 (e^x + x)e^x - 25.7)x \geq 0$ for every $x \geq 2.66$. Hence $f_8(\log \log n) \geq 0$ for every integer $n \geq \exp(\exp(3.05))$. Now 
we use Lemma \ref{lem504} to obtain $H_8(n) \geq 0$ for every integer $n \geq \exp(\exp(3.05))$. For smaller values of $n$, we use a computer.
\end{proof}

\begin{prop} \label{prop512}
We have $M_9(0.1955) = 1\,479\,240\,488$.
\end{prop}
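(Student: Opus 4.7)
The plan is to mirror the structure of Propositions \ref{prop510}--\ref{prop511}. Setting $w = \log \log n$, $y = \log n$, $z = \log p_n$, the inequality $H_9(n) \geq 0$ is equivalent to
\begin{displaymath}
0.1955\, w\, z^4 \geq 463.2275\, y^2.
\end{displaymath}
Since Lemma \ref{lem504} gives $z \geq \Phi(w)$ for $n \geq 3$, it suffices to prove that the auxiliary function
\begin{displaymath}
f_9(x) = 0.1955\, x\, \Phi^4(x) - 463.2275\, e^{2x}
\end{displaymath}
is nonnegative for all $x \geq 3.05$. The threshold $3.05$ is chosen because $\exp(\exp(3.05))$ is essentially $1\,479\,240\,488$, so this reduction takes care of all $n \geq \exp(\exp(3.05))$.

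Next I would estimate $f_9'(x)$. Differentiating gives
\begin{displaymath}
f_9'(x) = 0.1955\, \Phi^4(x) + 0.782\, x\, \Phi^3(x)\, \Phi'(x) - 926.455\, e^{2x}.
\end{displaymath}
Applying Lemma \ref{lem503} ($\Phi(x) \geq e^x + x$) and Lemma \ref{lem502} ($\Phi'(x) \geq e^x + 3/4$), valid for $x \geq 1.25$, I obtain
\begin{displaymath}
f_9'(x) \geq 0.1955\,(e^x+x)^4 + 0.782\, x\, (e^x+x)^3 \bigl(e^x + \tfrac{3}{4}\bigr) - 926.455\, e^{2x}.
\end{displaymath}
Using the crude bound $(e^x + x)^4 \geq e^{4x}$, the right-hand side dominates $(0.1955 + 0.782\, x)e^{4x} - 926.455\, e^{2x} = e^{2x}\bigl((0.1955 + 0.782\, x)e^{2x} - 926.455\bigr)$, which is clearly nonnegative once $x$ is at least about $3$. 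A direct numerical check at $x = 3$ confirms this, so $f_9'(x) \geq 0$ on $[3, \infty)$.

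It then remains to verify $f_9(3.05) \geq 0$ by direct numerical evaluation; monotonicity propagates nonnegativity to all $x \geq 3.05$, establishing $H_9(n) \geq 0$ for every integer $n \geq \exp(\exp(3.05))$. The finitely many remaining values of $n$ below this threshold, and the confirmation that $1\,479\,240\,488$ is indeed the minimum (i.e.\ $H_9(1\,479\,240\,487) < 0$), are handled by a computer check, as in the analogous propositions. I do not anticipate any conceptual obstacle; the only delicate point is tuning the numerical margin at $x = 3.05$ so that the inequality $f_9(3.05) \geq 0$ leaves enough slack for the derivative argument to take over. If that margin turned out to be too thin, I would either sharpen the derivative bound by retaining the full factor $\Phi'(x) \geq e^x + 3/4$ rather than discarding the $3/4$, or shift the breakpoint slightly above $3.05$ and use a direct tabulation on the tiny remaining interval.
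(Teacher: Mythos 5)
Your proposal is correct and follows essentially the same route as the paper: define $f_9(x) = 0.1955\,x\,\Phi^4(x) - 463.2275\,e^{2x}$, bound $f_9'$ from below via Lemmata \ref{lem502} and \ref{lem503}, check $f_9(3.05) \geq 0$ numerically, and invoke Lemma \ref{lem504} to conclude $H_9(n) \geq 0$ for $n \geq \exp(\exp(3.05))$, with a computer handling the finite remainder. The only (inessential) difference is that you discard more when bounding the derivative — you use $(e^x+x)^4 \geq e^{4x}$ throughout, while the paper keeps a factor $(e^x+x)^2$ and thereby obtains $f_9'(x) \geq 0$ already for $x \geq 2.83$; both margins comfortably cover the breakpoint $3.05$.
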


\begin{proof}
We define $f_9(x) = 0.1955x\,\Phi^4(x) - 463.2275e^{2x}$. By Lemmata \ref{lem502} and \ref{lem503}, we have $f_9'(x) \geq (0.1955(e^x+x)^2 + 0.782x(e^x+x)^2 - 
926.455)e^{2x} \geq 0$ for every $x \geq 2.83$. Combined with $f_9(3.05) \geq 7.11$, we get $f_9(x) \geq 0$ for every $x \geq 3.05$. Substituting $x = \log \log 
n$ in $f_9(x)$, we apply Lemma \ref{lem504} to see that $H_9(n) \geq 0$ for every integer $n \geq \exp(\exp(3.05))$. For every integer $n$ such that 
$1\,479\,240\,488 \leq n \leq \exp(\exp(3.05))$ we check the desired inequality with a computer.
\end{proof}

Finally, we determine the value of $M_{10}(0.08)$.

\begin{prop} \label{prop513}
We have $M_{10}(0.08) = 1\,447\,605\,594$.
\end{prop}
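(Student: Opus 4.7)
The plan is to follow the template of Propositions~\ref{prop510}--\ref{prop512} from this appendix, adapted to
\begin{displaymath}
H_{10}(n) = \frac{0.08 \log \log n}{\log^2 n \log p_n} - \frac{4585}{\log^6 p_n}.
\end{displaymath}
Writing $x = \log \log n$ and $z = \log p_n$ and clearing denominators, the inequality $H_{10}(n) \geq 0$ is equivalent to $0.08\, x\, z^5 \geq 4585\, e^{2x}$. By Lemma~\ref{lem504} we have $z \geq \Phi(x)$, so it suffices to prove the single-variable estimate
\begin{displaymath}
f_{10}(x) := 0.08\, x\, \Phi^5(x) - 4585\, e^{2x} \geq 0
\end{displaymath}
for $x = \log \log n$ with $n$ sufficiently large.

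Next I would establish $f_{10}(x) \geq 0$ for every $x \geq 3.05$. Differentiating gives
\begin{displaymath}
f_{10}'(x) = 0.08\, \Phi^5(x) + 0.4\, x\, \Phi^4(x)\, \Phi'(x) - 9170\, e^{2x}.
\end{displaymath}
Using Lemma~\ref{lem503} in the form $\Phi(x) \geq e^x + x$ for two of the $\Phi$-factors in each summand, bounding the remaining $\Phi$-factors from below by $e^x$, and invoking Lemma~\ref{lem502} in the weaker form $\Phi'(x) \geq e^x$, one obtains
\begin{displaymath}
f_{10}'(x) \geq \bigl((0.08 + 0.4\, x)(e^x + x)^2\, e^x - 9170\bigr)\, e^{2x}.
\end{displaymath}
A short numerical check shows that the bracketed expression is positive at $x = 3$, and an elementary monotonicity argument (every summand in its derivative is positive for $x \geq 0$) confirms it is increasing in $x$, so $f_{10}'(x) \geq 0$ for every $x \geq 3$. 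Together with a direct verification that $f_{10}(3.05) > 0$, this yields $f_{10}(x) \geq 0$ for every $x \geq 3.05$. Substituting $x = \log \log n$ and applying Lemma~\ref{lem504} gives $H_{10}(n) \geq 0$ for every integer $n \geq \exp(\exp(3.05))$.

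The remaining cases $1\,447\,605\,594 \leq n < \exp(\exp(3.05))$ are handled by a direct computer check, exactly as in the preceding propositions; this check also confirms that $1\,447\,605\,594$ is the minimal starting point. I expect no serious obstacle, since the argument is a close adaptation of the proof of Proposition~\ref{prop512} with $\Phi^4$ replaced by $\Phi^5$ and the constants $463.2275$, $926.455$ replaced by $4585$, $9170$. The one point deserving care is the bookkeeping when splitting $\Phi^5$ and $\Phi^4 \Phi'$ into $(e^x + x)^2 e^{3x}$, so that the resulting bound on $f_{10}'(x)$ factors cleanly through $e^{2x}$ and can be matched against the $-9170\, e^{2x}$ term.
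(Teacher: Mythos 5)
Your proof is correct and follows the same structure the paper uses: reduce $H_{10}(n)\ge 0$ to $f_{10}(x)=0.08x\,\Phi^5(x)-4585e^{2x}\ge 0$ via Lemma \ref{lem504}, show $f_{10}'\ge 0$ for $x$ large by bounding $\Phi$-factors below via Lemmas \ref{lem502} and \ref{lem503}, verify $f_{10}(3.05)>0$, and finish by computer for $1\,447\,605\,594\le n<\exp(\exp(3.05))$. The only difference is cosmetic: the paper drops the $0.08\Phi^5$ term and keeps three $(e^x+x)$-factors to get $f_{10}'(x)\ge(0.4x(e^x+x)^3-9170)e^{2x}$, whereas you retain both summands but keep only two $(e^x+x)$-factors; both lower bounds are positive from roughly $x\ge 2.9$ onward and serve equally well.
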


\begin{proof}
Let $f_{10}(x) = 0.08x \,\Phi^5(x) - 4585e^{2x}$. Applying Lemmata \ref{lem502} and \ref{lem503}, we get $f_{10}'(x) \geq (0.4x(e^{x}+x)^3 - 9170)e^{2x} 
\geq 0$ for every $x \geq 2.9$. Together with $f_{10}(3.05) \geq 6142.27$, we see that $f_{10}(\log \log n) \geq 0$ for every integer $n \geq 
\exp(\exp(3.05))$. Now, we use Lemma \ref{lem504} to conclude that $H_{10}(n) \geq 0$ for every integer $n \geq \exp(\exp(3.05))$. Finally, it suffices 
to verify the remaining cases with a computer.
\end{proof}


\end{document}